\newcommand{\reff}[1]{(\ref{#1})}
\theoremstyle{plain}
\newtheorem{theo}{Theorem}[section]
\newtheorem{cor}[theo]{Corollary}
\newtheorem{prop}[theo]{Proposition}
\newtheorem{lem}[theo]{Lemma}
\newtheorem{defi}[theo]{Definition}
\newtheorem{theorem}[theo]{Theorem}
\newtheorem{definition}[theo]{Definition}
\newtheorem{lemma}[theo]{Lemma}
\newtheorem{proposition}[theo]{Proposition}
\theoremstyle{remark}
\newtheorem{rem}[theo]{Remark}
\renewcommand{\phi}{\varphi}
\renewcommand{\epsilon}{\varepsilon}
\newcommand{\ca}{{\mathcal A}}
\newcommand{\cb}{{\mathcal B}}
\newcommand{\cc}{{\mathcal C}}
\newcommand{\cm}{{\mathcal M}}
\newcommand{\crr}{{\mathcal R}}
\newcommand{\as}{{\mathfrak S}}
\newcommand{\ct}{{\mathcal T}}
\newcommand{\cx}{{\mathcal X}}
\newcommand{\cw}{{\mathcal W}}
\newcommand{\cz}{{\mathcal Z}}
\newcommand{\cy}{{\mathcal Y}}
\newcommand{\N}{{\mathbb N}}
\newcommand{\R}{{\mathbb R}}
\newcommand{\T}{{\mathbb T}}
\newcommand{\K}{{\mathbb K}}
\newcommand{\LL}{\mathbb L}
\newcommand{\ind}{{\bf 1}}
\newcommand{\diam}{{\rm diam}\;}
\newcommand{\val}[1]{\mathop{\left| #1 \right|}\nolimits}
\newcommand{\inv}[1]{\mathop{\frac{1}{ #1}}\nolimits}
\newcommand{\expp}[1]{\mathop {\mathrm{e}^{ #1}}}
\begin{document}
 
\title{A note on Gromov-Hausdorff-Prokhorov distance between (locally)
compact measure spaces} 
\date{\today}
\author{Romain Abraham} 

\address{
Romain Abraham,
MAPMO, CNRS UMR 7349,
F\'ed\'eration Denis Poisson FR 2964,
Universit\'e d'Orl\'eans,
B.P. 6759,
45067 Orl\'eans cedex 2
FRANCE.
}
 
\email{romain.abraham@univ-orleans.fr} 

\author{Jean-Fran\c{c}ois Delmas}
\address{
Jean-Fran\c cois Delmas,
Universit\'e Paris-Est, CERMICS, 6-8
av. Blaise Pascal,
 Champs-sur-Marne, 77455 Marne La Vall\'e, France.
\url{http://cermics.enpc.fr/~delmas/home.html}
}
\email{delmas@cermics.enpc.fr}

\author{Patrick Hoscheit}
\address{
Patrick Hoscheit, 
Universit\'e Paris-Est, CERMICS, 6-8
av. Blaise Pascal,
 Champs-sur-Marne, 77455 Marne La Vall\'e, France.
\url{http://cermics.enpc.fr/~hoscheip/home.html}
}
\email{hoscheip@cermics.enpc.fr}

\thanks{This work is partially supported the French ``Agence Nationale de
 la Recherche'', ANR-08-BLAN-0190.}

\keywords{}

\subjclass[2010]{60G55, 60J25, 60J80}

 \begin{abstract} 
 We present an extension of the Gromov-Hausdorff metric on the set of compact
metric spaces: the Gromov-Hausdorff-Prokhorov metric on the set of compact 
metric spaces endowed with a finite measure. We then extend it to the 
non-compact case by describing a metric on the set of rooted complete 
locally compact length spaces endowed with a locally finite measure. We 
prove that this space with the extended Gromov-Hausdorff-Prokhorov 
metric is a Polish space. This generalization is needed to define Lévy
trees, which are (possibly unbounded) random real trees endowed with a locally
finite measure.
\end{abstract}

\maketitle

\section{Introduction}

In the present work, we aim to give a topological framework to certain
classes of measured metric spaces. The methods go back to
ideas from Gromov~\cite{Gromov1999metric}, who first considered the
so-called Gromov-Hausdorff metric in order to compare metric spaces who
might not be subspaces of a common metric space. The
classical theory of the Gromov-Hausdorff metric on the space of compact
metric spaces, as well as its extension to locally compact spaces, is
exposed in particular in Burago, Burago and Ivanov~\cite{Burago2001}.

Recently, the concept of Gromov-Hausdorff convergence has found striking
applications in the field of probability theory, in the context of
random graphs. Evans \cite{Evans} and Evans, Pitman and Winter
\cite{Evans2005} considered the space of real trees, which is
Polish when endowed with the Gromov-Hausdorff metric. This has given a
framework to the theory of continuum random trees, which originated with
Aldous \cite{Aldous1991a}. There are also applications in the context
of random maps, where there have been significant developments in these
last years. In the monograph by Evans \cite{Evans}, the author
describes a topology on the space of compact real trees, equipped
with a probability measure, using the Prokhorov metric to compare
the measures, thus defining the so-called weighted Gromov-Hausdorff
metric. Recently Greven, Pfaffelhuber and Winter
\cite{Greven2008b} take another approach by considering the
space of complete, separable metric spaces, endowed with probability
measures (metric measure spaces). In order to compare two such
probability spaces, they consider embeddings of both these spaces into
some common Polish metric space, and use the Prokhorov metric to compare
the ensuing measures. This puts the emphasis on the probability measure
carried by the space rather than its geometrical features. In his
monograph, Villani \cite{Villani2009} gives an account of the theory
of measured metric spaces and the different approaches to their
topology. Miermont, in \cite{Miermont2007}, describes a combined
approach, using both the Hausdorff metric and the Prokhorov metric to
compare compact metric spaces equipped with probability measures. The
metric he uses (called the Gromov-Hausdorff-Prokhorov metric) is not the
same as Evans's, but they are shown to give rise to the same topology.

In the present paper, we describe several properties of the
Gromov-Hausdorff-Prokhorov metric, $d^c_{\text{GHP}}$, on the set $\K$
of (isometry classes of) compact metric spaces, with a distinguished element
called the root and endowed with a finite measure. Theorem
\ref{theo:dcGHP} ensures that $(\K, d^c_{\text{GHP}})$ is a Polish
metric space. We extend those results by considering the
Gromov-Hausdorff-Prokhorov metric, $d_{\text{GHP}}$, on the set $\LL$ of
(isometry classes of) rooted locally compact, complete length spaces, endowed
with
a locally finite measure. Theorem \ref{theo:LL} ensures that $(\LL,
d_{\text{GHP}})$ is also a Polish metric space. The proof of the
completeness of $\LL$ relies on a pre-compactness criterion given in
Theorem \ref{GHP:PreComL}. The methods used are similar to the methods
used in \cite{Burago2001} to derive properties about the
Gromov-Hausdorff topology of the set of locally compact complete length
spaces. This work extends some of the results from \cite{Greven2008b},
which doesn't take into account the geometrical structure of the spaces,
as well as the results from \cite{Miermont2007}, which consider only the
compact case and probability measures. This comes at the price of
having to restrict ourselves to the context of length spaces. In
\cite{Villani2009} the Gromov-Hausdorff-Prokhorov topology is considered
for general Polish spaces (instead of length spaces) but endowed with
locally finite measures satisfying the doubling condition. We also mention the
different approach of \cite{Addario-Berry2011}, using the ideas of
correspondences between metric spaces and couplings of measures.


This work was developed for applications in the setting of weighted
real trees (which are elements of $\LL$), see Abraham, Delmas and
Hoscheit \cite{Abraham2011a}. We give an hint of those applications by
stating that the construction of a weighted tree coded in a continuous
function with compact support is measurable with respect to the topology
induced by $d^c_{\text{GHP}}$ on $\K$ or by $d_{\text{GHP}}$ on
$\LL$. This construction allows us to define random variables on $\K$
using continuous random processes on $\R$, in particular the L\'evy
trees of \cite{Duquesne2005a} that describe the genealogy of the
so-called critical or sub-critical continuous state branching
processes that become a.s. extinct. The measure $\mathbf{m}$ is then
a ``uniform'' measure on the leaves of the tree which has finite mass. The
construction can be generalized to super-critical continuous state branching
processes which can live forever; in that case the corresponding
genealogical tree is infinite and the measure $\mathbf{m}$ on the
leaves is also infinite. This paper gives an appropriate framework to
handle such tree-valued random variables and also tree-valued Markov
processes as in \cite{Abraham2011a}.

The structure of the paper is as follow. Section \ref{sec:main} collects
the main results of the paper. The application to real trees is given in
Section \ref{sec:code}. The proofs of the results in the compact case
are given in Section \ref{sec:K}. The proofs of the results in the
locally compact case are given in Section \ref{sec:L}.

\section{Main results}
\label{sec:main}
\subsection{Rooted weighted metric spaces}

Let $(X,d^X)$ be a Polish metric space. The diameter of $A\in \cb(X)$ is
given by:
\[
\diam(A)=\sup\{d^X(x,y); \ x,y\in A\}.
\]
For $A,B\in
\cb(X)$, we set:
\[
 d_\text{H}^X(A,B)= \inf \{ \epsilon >0;\ A\subset B^\epsilon\
 \mathrm{and}\ B\subset 
A^\epsilon \}, 
\]
the Hausdorff metric between $A$ and $B$, where 
\begin{equation}
 \label{eq:e-halo}
A^\epsilon = \{ x\in
X;\ \inf_{y\in A} d^X(x,y) < \epsilon\}
\end{equation}
is the $\epsilon$-halo set of $A$. If $X$ is compact, then the space of
compact subsets of $X$, endowed with the Hausdorff metric, is compact,
see theorem 7.3.8 in \cite{Burago2001}. To give pre-compactness
criterion, we shall need the notion of $\epsilon$-nets.
\begin{definition}
\label{defi:e-net}
 Let $(X,d^X)$ be a metric space, and let $\epsilon >0$. A subset $A\subset X$
is an $\epsilon$-net of $B\subset X$ if:
\[
A\subset B\subset A^\varepsilon.
\] 
\end{definition}
Notice that, for any $\epsilon>0$, compact metric spaces admit finite
$\epsilon$-nets and locally compact spaces admit locally finite
$\epsilon$-nets.\\

Let $\cm_{f}(X)$ denote 
the set of all finite Borel measures on $X$. If $\mu,\nu \in
\cm_f(X)$, we set:
\[
 d_\text{P}^X(\mu,\nu) = \inf \{ \epsilon >0;\ \mu(A)\le \nu(A^\epsilon) +
 \epsilon 
\text{ and } 
\nu(A)\le \mu(A^\epsilon)+\epsilon\ 
\text{ for any closed set } A \}, 
\]
the Prokhorov metric between $\mu$ and $\nu$. It is well known,
see \cite{Daley2003} Appendix A.2.5, that
$(\cm_f(X), d_\text{P}^X)$ is a Polish metric space, and that the
topology generated by $d_\text{P}^X$ is exactly the topology of weak
convergence (convergence against continuous bounded functionals).

The Prokhorov metric can be extended in the following way. Recall that a Borel 
measure is locally finite if the measure of any
bounded Borel set is finite. Let $\cm(X)$
denote the set of all locally finite Borel measures on $X$. Let
$\emptyset$ be a distinguished element of $X$, which we shall call the
root. We will consider
 the closed ball of radius $r$ centered at
$\emptyset$: 
\begin{equation}
 \label{eq:X(r)}
X^{(r)}=\{x\in X; d^X(\emptyset,x)\leq r\},
\end{equation}
and for $\mu\in \cm(X)$ its restriction $\mu^{(r)}$ to $X^{(r)}$:
\begin{equation}
 \label{eq:mu(r)}
\mu^{(r)} (dx)=\ind_{X^{(r)}} (x)\; \mu(dx).
\end{equation}
If $\mu,\nu \in
\cm(X)$, we define a generalized Prokhorov metric between $\mu$ and $\nu$:
\begin{equation}
 \label{eq:dgP}
 d_\text{gP}^X(\mu,\nu) = \int_0^\infty \expp{-r} \left(1 \wedge
d^X_{\text{P}}\left(\mu^{(r)},\nu^{(r)}\right)
\right) \ dr.
\end{equation}
It is not difficult to check that $d_\text{gP}^X$ is well defined (see
Lemma \ref{lem:reg-d-GHP} in a more general framework) and is a
metric. Furthermore $(\cm(X), d_\text{gP}^X)$ is a Polish metric
space, and the topology generated by $d_\text{gP}^X$ is exactly the
topology of vague convergence (convergence against continuous bounded
functionals with
bounded support), see \cite{Daley2003} Appendix A.2.6.\\

When there is no ambiguity on the metric space $(X, d^X)$, we may write $d$,
$d_\text{H}$, and $d_\text{P}$ instead of $d^X$, $d^X_\text{H}$ and
$d^X_\text{P}$. In the case where we consider different metrics on the same
space, in order to stress that the metric is $d^X$, we shall write
$d^{d^X}_\text{H}$ and $d^{d^X}_\text{P}$ for the
corresponding Hausdorff and Prokhorov metrics.

If $\Phi:X\rightarrow X'$ is a Borel map between two Polish metric
spaces and if $\mu$ is a Borel measure on $X$, we will note $\Phi_*\mu$
the image measure on $X'$ defined by $\Phi_*\mu(A)=\mu(\Phi^{-1}(A))$,
for any Borel set $A\subset X$. 

\begin{defi}
 \label{defi:rwms} 
$ $
\begin{itemize}
\item A rooted weighted metric space $\cx = (X,d, \emptyset,\mu)$ is a
 metric space $(X , d)$ with a distinguished element $\emptyset\in X$,
 called the root, and a locally finite Borel measure $\mu$.
\item Two rooted weighted metric spaces $\cx=(X,d,\emptyset,\mu)$
 and $\cx'=(X',d',\emptyset',\mu') $ are said to be
 GHP-isometric if there exists an isometric one-to-one map $\Phi:X
 \rightarrow X'$ such that $\Phi(\emptyset)= \emptyset'$ and 
 $\Phi_* \mu = \mu'$. In that case, $\Phi$ is called a GHP-isometry. 
\end{itemize}
\end{defi}

Notice that if $(X, d)$ is compact, then a locally finite measure on
$X$ is finite and belongs to $\cm_f(X)$. We will now use a 
procedure due to Gromov \cite{Gromov1999metric} to compare any two
compact rooted weighted metric spaces, even if they are not subspaces of
the same Polish metric space.

\subsection{Gromov-Hausdorff-Prokhorov metric for compact spaces}
For convenience, we recall the Gromov-Hausdorff metric, see for
example Definition 7.3.10 in \cite{Burago2001}. Let $(X,d)$ and
$(X',d')$ be two compact metric spaces. The Gromov-Hausdorff metric
between $(X,d)$ and $(X',d')$ is given by:
\begin{equation}
 \label{eq:d-GH}
 d_{\text{GH}}^c((X,d), (X',d')) = \inf_{\varphi,\varphi',Z} 
 d_\text{H}^Z(\varphi(X),\varphi'(X')) , 
\end{equation}
where the infimum is taken over all isometric embeddings
$\varphi:X\hookrightarrow Z$ and $\varphi':X'\hookrightarrow Z$ into some
common Polish metric space $(Z,d^Z)$. 
Note that Equation \reff{eq:d-GH} does
actually define a metric on the set of isometry classes of
compact metric spaces.\\

Now, we introduce the Gromov-Hausdorff-Prokhorov metric for compact
spaces. Let $\cx=(X,d,\emptyset,\mu)$ and
$\cx'=(X',d',\emptyset',\mu')$ be two compact rooted weighted metric
spaces, and define:
\begin{equation} 
\label{f:def}
 d_{\text{GHP}}^c(\cx,\cx') = \inf_{\Phi,\Phi',Z} \left(
 d^Z(\Phi(\emptyset),\Phi'(\emptyset')) + d_\text{H}^Z(\Phi(X),\Phi'(X')) + 
d_\text{P}^Z(\Phi_* \mu,\Phi_*'
\mu') \right), 
\end{equation}
where the infimum is taken over all isometric embeddings
$\Phi:X\hookrightarrow Z$ and $\Phi':X'\hookrightarrow Z$ into some
common Polish metric space $(Z,d^Z)$. 

Note that equation \reff{f:def} does
not actually define a metric, as
$d_{\text{GHP}}^c(\cx,\cx')=0$ if $\cx$ and $\cx'$ are GHP-isometric.
Therefore, we shall consider $\K$, the set of GHP-isometry classes of
compact rooted weighted metric space and identify a compact rooted
weighted metric space with its class in $\K$. Then the function
$d_{\text{GHP}}^c$ is finite on $\K^2$. 

\begin{theo}
 \label{theo:dcGHP}
$ $ 
\begin{itemize}
 \item[(i)] The function $d_{\text{GHP}}^c$ defines a metric on $\K$.
 \item[(ii)] The space
$(\K, d_{\text{GHP}}^c)$ is a Polish metric space.
\end{itemize} 
 \end{theo}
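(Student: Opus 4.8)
The plan is to follow the standard template for proving that Gromov-Hausdorff-type distances are Polish metrics, adapting each step to accommodate the root and the measure. For part (i), the main work is the triangle inequality and the separation axiom $d_{\text{GHP}}^c(\cx,\cx')=0 \Rightarrow \cx,\cx'$ GHP-isometric; symmetry and finiteness are immediate (finiteness because one may always embed $X$ and $X'$ into the disjoint union $X\sqcup X'$ with a suitable metric gluing the two roots, which bounds $d_{\text{GHP}}^c$ by $\diam(X)+\diam(X')+\mu(X)+\mu'(X')$ up to constants). For the triangle inequality, given $\cx,\cx',\cx''$ and near-optimal embeddings of the first pair into $Z_1$ and of the second pair into $Z_2$, I would glue $Z_1$ and $Z_2$ along the common copy of $X'$ to form a single Polish space $Z$; the three quantities (root distance, Hausdorff distance, Prokhorov distance) each satisfy the triangle inequality inside $Z$, so their sum does too, and taking infima gives the result. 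For the separation property, from a sequence of embeddings $\Phi_n,\Phi'_n$ into spaces $Z_n$ with the sum of the three terms tending to $0$, I would use a compactness argument: since $X$ and $X'$ are compact, one can extract (after suitable re-embedding into a fixed space, e.g. $\ell^\infty$ via a countable dense set, or by a diagonal/ultralimit argument as in \cite{Burago2001}) a limiting isometric embedding of $X$ and $X'$ into a common compact space with coinciding images, coinciding roots, and equal pushforward measures, yielding a GHP-isometry.

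For part (ii), separability and completeness must be established. Separability: finite rooted weighted metric spaces with rational distances and rational atomic masses form a countable set, and any compact rooted weighted metric space can be approximated by such, using a finite $\epsilon$-net for the metric part (each point of $X$ is mapped to a nearby net point, giving a small Hausdorff distance) and approximating $\mu$ in the Prokhorov metric by a finitely supported measure with rational weights concentrated on the net — both approximations can be realized within $X$ itself, so $d_{\text{GHP}}^c$ between $\cx$ and its approximant is small. For completeness, I would take a Cauchy sequence $(\cx_n)$ in $\K$; passing to a subsequence with $d_{\text{GHP}}^c(\cx_n,\cx_{n+1})\le 2^{-n}$, I would iteratively glue near-optimal embeddings to embed all $X_n$ isometrically into a single Polish space $Z$ such that the images $\Phi_n(X_n)$ are Cauchy in $d_\text{H}^Z$, the roots $\Phi_n(\emptyset_n)$ are Cauchy in $Z$, and the pushforward measures are Cauchy in $d_\text{P}^Z$. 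Since the $\Phi_n(X_n)$ are totally bounded and Cauchy in Hausdorff distance, their union is totally bounded, and one can pass to its completion $\bar Z$, where the closed sets $\Phi_n(X_n)$ converge to a compact set $X_\infty$ (the space of compact subsets of a compact space is compact, by Theorem 7.3.8 of \cite{Burago2001}); the roots converge to some $\emptyset_\infty\in X_\infty$, and the measures converge weakly to some finite measure $\mu_\infty$ supported on $X_\infty$ (using that $(\cm_f(\bar Z),d_\text{P})$ is complete). Then $\cx_\infty=(X_\infty,d,\emptyset_\infty,\mu_\infty)\in\K$ is the limit.

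The main obstacle, I expect, is the gluing construction used repeatedly above: one must verify that gluing two Polish metric spaces along an isometric copy of a common compact subspace (or along $\epsilon$-approximate copies, which requires adding $\epsilon$ to the glued metric and checking the triangle inequality still holds) genuinely produces a metric space into which both embed isometrically, and that the infimum defining $d_{\text{GHP}}^c$ is unchanged whether one ranges over all Polish $Z$ or restricts to such glued spaces. This is routine but must be done carefully for all three terms simultaneously; in particular, for the measure term one needs that the Prokhorov distance computed in the glued space dominates (up to the gluing error $\epsilon$) the one computed in either piece. A secondary technical point is that in the completeness argument the limiting measure $\mu_\infty$ a priori lives on $\bar Z$ and one must check it is carried by $X_\infty$; this follows because the supports of the $\Phi_n{}_*\mu_n$ are contained in $\Phi_n(X_n)$, which converge in Hausdorff distance to $X_\infty$, so any weak limit is supported on $X_\infty$. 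Everything else — symmetry, the non-negativity and finiteness of $d_{\text{GHP}}^c$, and the elementary properties of $d_\text{H}$ and $d_\text{P}$ — is standard and can be cited from \cite{Burago2001} and \cite{Daley2003}.
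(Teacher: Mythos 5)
Your proposal is correct in outline, and for part (i) it follows essentially the paper's own strategy: the triangle inequality by gluing two near-optimal ambient spaces along the common copy of $\cx'$ (the paper first reduces, in Lemma~\ref{df:uniondisjointe}, the infimum over all Polish $Z$ to an infimum over metrics on the disjoint union $X\sqcup Y$, and then glues $X_1\sqcup X_2$ and $X_2\sqcup X_3$ exactly as you describe), and positive definiteness by a compactness/diagonal extraction. For the latter, the paper's execution is worth noting because it is the concrete form of what you gesture at: rather than re-embedding everything into $\ell^\infty$, it fixes finite $(1/k)$-nets $S_k$ of $X$, transports them into $Y$ via the metrics $d^n$ on $X\sqcup Y$, extracts limits $y_{i,k}$ by compactness of $Y$ and a diagonal argument, checks that $x_{i,k}\mapsto y_{i,k}$ is a root-preserving isometry between dense subsets, extends it to $\Phi:X\to Y$, and then verifies $\Phi_*\mu^X=\mu^Y$ by approximating $\mu^X$ with discrete measures on the nets and passing to the limit in the Prokhorov metric — the measure identification is the one step you should not leave implicit, since the isometry $\Phi$ is built from the metric data alone. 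Where you genuinely diverge is completeness in part (ii): you construct the limit directly by chaining near-optimal gluings of consecutive terms of a fast Cauchy subsequence into one ambient space $\bar Z$, and take Hausdorff, root, and Prokhorov limits there (your handling of why $\mu_\infty$ is carried by $X_\infty$ is correct). The paper instead proves a pre-compactness criterion (Theorem~\ref{GHP:PreComK}, in the style of Theorem 7.4.15 of \cite{Burago2001}) and checks that any Cauchy sequence satisfies its hypotheses, using the elementary bounds $d^c_{\text{GHP}}\geq |\diam(X)-\diam(Y)|+|\mu(X)-\nu(Y)|$ and $d^c_{\text{GHP}}\geq d^c_{\text{GH}}$ together with the classical Gromov pre-compactness result. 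Your route is more self-contained and avoids the abstract-limit construction, at the cost of carefully verifying the iterated gluing and the total boundedness of $\bigcup_n \Phi_n(X_n)$; the paper's route yields the pre-compactness criterion as a separately useful statement (it is reused, in its locally compact form, for the completeness of $\LL$). Separability is handled identically in both (finite spaces with rational distances and weights).
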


We shall call $d_{\text{GHP}}^c$ the Gromov-Hausdorff-Prokhorov
metric. This extends the Gromov-Hausdorff metric on compact metric
spaces, see \cite{Burago2001} section 7, as well as the
Gromov-Hausdorff-Prokhorov metric on compact metric spaces endowed with a
probability measure, see \cite{Miermont2007}. See also
\cite{Greven2008b} for another approach on metric spaces endowed with a
probability measure. 

We end this Section by a pre-compactness criterion on $\K$. 

\begin{theorem}
\label{GHP:PreComK}
 Let $\ca$ be a subset of $\K$, such that:
\begin{itemize}
\item[(i)] We have $\sup_{(X,d,\emptyset,\mu) \in \ca} \diam(X )<+\infty
 $.
\item[(ii)] For every $\varepsilon>0$, there exists a finite integer
 $N(\epsilon)\ge 1$, such that for any $(X,d,\emptyset,\mu) \in \ca$,
 there is an $\varepsilon$-net of $X$ with cardinal less than
 $N(\varepsilon)$.
\item[(iii)] We have $\sup_{(X,d,\emptyset,\mu) \in \ca} \mu(X )<+\infty
 $.
\end{itemize}
Then, $\ca$ is relatively compact: every sequence in $\ca$ admits a
sub-sequence that converges in the $d^c_{\text{GHP}}$ topology.
\end{theorem}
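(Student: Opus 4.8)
The plan is to extract a convergent subsequence through three successive compactness arguments, all performed inside one fixed compact ambient space. Fix a sequence $(\cx_n)_{n\in\N}$ in $\ca$, with $\cx_n=(X_n,d_n,\emptyset_n,\mu_n)$. Conditions (i) and (ii) say precisely that the metric spaces $(X_n,d_n)$ form a uniformly totally bounded family; by the Gromov pre-compactness theorem (see Theorem 7.4.15 in \cite{Burago2001}), together with the standard fact that Gromov-Hausdorff convergence of a sequence of compact spaces can be realized by isometric embeddings into a common \emph{compact} metric space, I would first pass to a subsequence (not relabelled) along which there exist a compact metric space $(Z,d^Z)$, isometric embeddings $\Phi_n:X_n\hookrightarrow Z$, and a compact set $X\subset Z$ with $d_\text{H}^Z(\Phi_n(X_n),X)\to 0$.

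From here everything takes place in $Z$. Since $Z$ is compact, a further extraction gives $\Phi_n(\emptyset_n)\to\emptyset$ for some $\emptyset\in Z$, and $\emptyset\in X$ because $\Phi_n(\emptyset_n)\in\Phi_n(X_n)$ and $d_\text{H}^Z(\Phi_n(X_n),X)\to 0$. By (iii) the image measures $\Phi_{n*}\mu_n$ have total mass bounded by a constant $C$; on the compact space $Z$ the family $\{\nu\in\cm_f(Z):\nu(Z)\le C\}$ is relatively compact for the weak topology (Prokhorov's theorem, uniform tightness being automatic since $Z$ is compact), so a last extraction yields $\Phi_{n*}\mu_n\to\mu$ in $d_\text{P}^Z$ for some $\mu\in\cm_f(Z)$.

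It then remains to identify $(X,d^Z,\emptyset,\mu)$ as an element of $\K$ and to check convergence. To see that $\mu$ is carried by $X$: for $\delta>0$ and $n$ large one has $\Phi_n(X_n)\subset X^\delta$, hence $\Phi_{n*}\mu_n(\overline{X^\delta})=\Phi_{n*}\mu_n(Z)$; since $d_\text{P}^Z$-convergence gives $\Phi_{n*}\mu_n(Z)\to\mu(Z)$ and, for the closed set $\overline{X^\delta}$, $\mu(\overline{X^\delta})\ge\limsup_n\Phi_{n*}\mu_n(\overline{X^\delta})$, one gets $\mu(Z\setminus\overline{X^\delta})=0$; letting $\delta\downarrow0$ along a sequence and using $\bigcap_{\delta>0}\overline{X^\delta}=X$ gives $\supp\mu\subset X$. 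Therefore $\cx:=(X,d^Z,\emptyset,\mu)$, with $d^Z$ restricted to $X$ and $\mu$ regarded as a finite measure on $X$, is a compact rooted weighted metric space, i.e. a point of $\K$. Using $Z$ with the embeddings $\Phi_n$ and the inclusion $X\hookrightarrow Z$ as competitors in \reff{f:def},
\[
d^c_{\text{GHP}}(\cx_n,\cx)\le d^Z\bigl(\Phi_n(\emptyset_n),\emptyset\bigr)+d_\text{H}^Z\bigl(\Phi_n(X_n),X\bigr)+d_\text{P}^Z\bigl(\Phi_{n*}\mu_n,\mu\bigr),
\]
and the right-hand side tends to $0$ by construction, so the extracted subsequence converges in $\K$ and $\ca$ is relatively compact.

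The main obstacle is the very first step: producing a single compact space $Z$ that simultaneously contains isometric copies of all the $X_n$ (along a subsequence) and of their Hausdorff limit $X$. This is exactly where (i)--(ii) are genuinely used, through $\epsilon$-nets of bounded cardinality whose pairwise distances take values in a fixed compact set, which allows both the extraction of a Gromov-Hausdorff limit and the construction, by gluing along near-isometries, of the common compact ambient space. After that, the extractions for the root and for the measures are soft consequences of the compactness of $Z$ (and, for the measures, of Prokhorov's theorem), the only remaining care being the bookkeeping with $\epsilon$-halo sets in the proof that $\supp\mu\subset X$.
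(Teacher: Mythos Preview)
Your argument is correct, and in fact quite clean: once a common compact ambient space $Z$ is available, the three successive extractions (Hausdorff limit of the images, limit of the roots, Prokhorov limit of the measures using the mass bound (iii)) are indeed soft, and the portmanteau-type argument that $\mu$ is carried by $X$ is the right way to close the loop. The one point you flag as the obstacle---the existence of $Z$---is standard: after extracting a GH-convergent subsequence via Theorem~7.4.15 of \cite{Burago2001}, one glues the spaces $X_n\sqcup X$ over $X$ using admissible metrics realising $d_\text{H}(X_n,X)<\epsilon_n\to0$, and the resulting disjoint union is totally bounded, hence has compact completion.

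This is, however, a genuinely different route from the one the paper has in mind. The paper does not prove Theorem~\ref{GHP:PreComK} directly but declares it a simplified version of the proof of Theorem~\ref{GHP:PreComL}, which proceeds by an \emph{explicit} construction of the limit: one fixes uniform $2^{-k}$-nets of bounded cardinality in each $X_n$, extracts so that all pairwise distances between net points converge, builds the limit space abstractly as the completion of the set of limit points, and then constructs the limit measure as a limit of discrete approximations supported on the nets. Your approach treats Gromov pre-compactness as a black box and then exploits compactness of a fixed ambient $Z$; the paper's approach is more hands-on and self-contained. What your route buys is brevity and modularity in the compact case; what the paper's route buys is a template that carries over verbatim to the locally compact setting of Theorem~\ref{GHP:PreComL}, where no single compact ambient space is available and the abstract net construction becomes essential.
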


Notice that we could have defined a Gromov-Hausdorff-Prokhorov metric
without reference to any root. However, the introduction of the root is
necessary to define the Gromov-Hausdorff-Prokhorov metric for locally
compact spaces, see next Section.

\subsection{Gromov-Hausdorff-Prokhorov metric for locally compact spaces}

To consider an extension to non compact weighted rooted metric spaces,
we shall consider complete and locally compact length spaces.

We recall that a
metric space $(X,d)$ is a length space if for every $x,y\in X$,
we have:
\[
 d(x,y) = \inf L(\gamma) , 
\]
where the infimum is taken over all rectifiable curves $\gamma:[0,1]\rightarrow
X$ such that $\gamma(0)=x$ and $\gamma(1)=y$, and where $L(\gamma)$ is the
length of the rectifiable curve $\gamma$. We recall that $(X,d)$ is a
length space if is satisfies the mid-point condition (see Theorem 2.4.16 in
\cite{Burago2001}): for all
$\varepsilon>0$, $x,y\in X$, there exists $z\in X$ such that:
\[
\val{2d(x,z) - d(x,y)}+\val{2d(y,z) - d(x,y)}\leq \varepsilon.
\]

\begin{defi}
 \label{defi:L}
Let $\LL$ be the set of GHP-isometry classes of rooted, weighted, complete
and locally compact length spaces and identify a rooted, weighted, complete
and locally compact length spaces with its class in $\LL$.
\end{defi}

If $\cx=(X,d,\emptyset,\mu)\in \LL$, then for $r\geq 0$ we will consider
its restriction to the closed ball of radius $r$ centered at
$\emptyset$, $\cx^{(r)}=(X^{(r)}, d^{(r)}, \emptyset, \mu^{(r)})$, where
$X^{(r)} $ is defined by \reff{eq:X(r)}, the metric $d^{(r)}$ is the
restriction of $d$ to $X^{(r)}$, and the measure $\mu^{(r)}$ is defined
by \reff{eq:mu(r)}. Recall that the Hopf-Rinow theorem implies that if $(X,
d)$ is a complete and locally compact length space, then every closed
bounded subset of $X$ is compact. In particular if $\cx$ belongs to $
\LL$ , then $\cx^{(r)}$ belongs to $\K$ for all $r\geq 0$.

We state a regularity Lemma of $d^c_{\text{GHP}}$ with respect to the
restriction operation. 

\begin{lem}
 \label{lem:reg-d-GHP}
Let $\cx$ and $\cy$ be in $\LL$. Then the function defined on $\R_+$
by $ r\mapsto d^c_{\text{GHP}}\left(\cx^{(r)},\cy^{(r)}\right)$
is càdlàg.
\end{lem}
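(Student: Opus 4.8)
The plan is to reduce the regularity of $r\mapsto d^c_{\text{GHP}}(\cx^{(r)},\cy^{(r)})$ to that of each argument separately, by establishing two things: first, that the map $r\mapsto \cx^{(r)}$ is itself c\`adl\`ag in $(\K, d^c_{\text{GHP}})$, and second, that $d^c_{\text{GHP}}$ is $1$-Lipschitz in each variable, so that
\[
\val{d^c_{\text{GHP}}(\cx^{(r)},\cy^{(r)}) - d^c_{\text{GHP}}(\cx^{(s)},\cy^{(s)})} \leq d^c_{\text{GHP}}(\cx^{(r)},\cx^{(s)}) + d^c_{\text{GHP}}(\cy^{(r)},\cy^{(s)}).
\]
The Lipschitz step is immediate from the triangle inequality. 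So the whole matter comes down to analyzing $r\mapsto \cx^{(r)}$ for a single $\cx = (X,d,\emptyset,\mu) \in \LL$.

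For the right-continuity at $r$, I would use the natural embedding of $X^{(r)}$ inside $X^{(r+h)}$ for $h>0$ (both as subsets of $X$ with the induced metric, sharing the same root). This gives an explicit common space $Z = X^{(r+h)}$ in the infimum defining $d^c_{\text{GHP}}(\cx^{(r)},\cx^{(r+h)})$, so that
\[
d^c_{\text{GHP}}(\cx^{(r)},\cx^{(r+h)}) \leq d_\text{H}^{X^{(r+h)}}(X^{(r)},X^{(r+h)}) + d_\text{P}^{X^{(r+h)}}(\mu^{(r)},\mu^{(r+h)}).
\]
As $h\downarrow 0$, the Hausdorff term is bounded by $h$ (any point of $X^{(r+h)}$ is within distance $h$ of the ball $X^{(r)}$, using that $X$ is a length space so the sphere of radius $r$ is nonempty and reachable), and the Prokhorov term goes to $0$ because $\mu^{(r+h)} - \mu^{(r)}$ is the restriction of $\mu$ to the annulus $\{r < d(\emptyset,\cdot)\leq r+h\}$, whose mass tends to $\mu(\{d(\emptyset,\cdot)=r\})^+ \to 0$ by monotone convergence (the mass of the closed ball is right-continuous in $r$). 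This yields right-continuity, and in fact continuity at every $r$ that is not an atom value of $\rho \mapsto \mu(X^{(\rho)})$.

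For the left limits, fix $r>0$ and consider $h\downarrow 0$: here I compare $\cx^{(r-h)}$ with the space $\cx^{(r^-)} := (X^{(r^-)}, d, \emptyset, \mu^{(r^-)})$ where $X^{(r^-)} = \{x : d(\emptyset,x) < r\}$ is the closure-completed open ball, i.e. one keeps $X^{(r)}$ as the underlying set but replaces $\mu^{(r)}$ by the restriction of $\mu$ to the open ball. Again embedding both in $X^{(r)}$, the Hausdorff distance is $0$ (the open ball is dense in the closed ball, by the length-space property) and the Prokhorov distance between $\mu^{(r-h)}$ and $\mu^{(r^-)}$ tends to $0$ by monotone convergence on the annulus $\{r-h < d(\emptyset,\cdot) < r\}$. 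So $\cx^{(r-h)} \to \cx^{(r^-)}$ in $\K$, proving the left limit exists.

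The main obstacle, and the place to be careful, is the Hausdorff-distance estimate between concentric balls: this is exactly where the length-space hypothesis is essential and must be invoked. In a general metric space the closed ball of radius $r$ could be empty of points near its boundary, so $X^{(r)}$ need not be a good approximation of $X^{(r+h)}$; but for a complete, locally compact length space, Hopf--Rinow guarantees geodesics exist and are extendable up to the radius of the space, so that for $r$ below the diameter every point of $X^{(r+h)}$ genuinely lies within $h$ of $X^{(r)}$, and for $r$ at or above the diameter the balls stabilize and the claim is trivial. One should also note the edge case $r=0$, where only right-continuity is required and holds by the same argument. Everything else is the triangle inequality and monotone convergence, so the proof is short once this geometric point is settled.
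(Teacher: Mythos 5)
Your proposal is correct and follows essentially the same route as the paper: reduce to the single-space estimate $d^c_{\text{GHP}}\bigl(\cx^{(r)},\cx^{(r\pm h)}\bigr)\le h+\mu(\text{annulus})$ via the triangle inequality, control the Hausdorff term by the length-space property of concentric balls and the Prokhorov term by the vanishing mass of the shrinking annulus, and introduce the same limit object (the closed ball carrying the measure restricted to the open ball) to get left limits. The only slip is cosmetic: in the left-limit step the Hausdorff distance between $X^{(r-h)}$ and $X^{(r)}$ is bounded by $h$ rather than equal to $0$, which does not affect the conclusion.
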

This implies that the following function (inspired by \reff{eq:dgP}) is
well defined on $\LL^2$:
\[
 d_{\text{GHP}}(\cx,\cy) = \int_0^\infty \expp{-r} \left(1 \wedge
d^c_{\text{GHP}}\left(\cx^{(r)},\cy^{(r)}\right)
\right) \ dr.
\]


\begin{theo}
 \label{theo:LL}
$ $
\begin{itemize}
 \item[(i)] The function $d_{\text{GHP}}$ defines a metric on $\LL$.
\item[(ii)] The space $(\LL, d_{\text{GHP}})$ is a Polish metric space.
\end{itemize}
\end{theo}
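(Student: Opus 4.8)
The plan is to follow the same general strategy used earlier for Theorem \ref{theo:dcGHP}, but now working at the level of the ``profile'' of a space, i.e. the family of its restrictions $(\cx^{(r)})_{r\geq 0}$. First I would check that $d_{\text{GHP}}$ is a metric on $\LL$. Symmetry and the triangle inequality are inherited from $d^c_{\text{GHP}}$ on each $\K$-valued slice $\cx^{(r)}$ by integrating against $\expp{-r}\,dr$ (using that $t\mapsto 1\wedge t$ is a nondecreasing subadditive function, so $1\wedge d^c_{\text{GHP}}$ still satisfies the triangle inequality). Finiteness is immediate since the integrand is bounded by $\expp{-r}$. The only delicate point in (i) is the separation axiom: if $d_{\text{GHP}}(\cx,\cy)=0$, then by Lemma \ref{lem:reg-d-GHP} the c\`adl\`ag function $r\mapsto d^c_{\text{GHP}}(\cx^{(r)},\cy^{(r)})$ vanishes for almost every $r$, hence (by right-continuity) for every $r\geq 0$; so $\cx^{(r)}$ and $\cy^{(r)}$ are GHP-isometric for all $r$. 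I would then promote this family of isometries to a single isometry $\cx\to\cy$: using that closed balls are compact (Hopf-Rinow) and that a GHP-isometry between compact rooted weighted spaces is essentially unique up to the negligible ambiguity on the measure (this rigidity should be extractable from the compact theory, e.g. via the arguments behind Theorem \ref{theo:dcGHP}(i)), one builds compatible isometries $\Phi_n$ on $X^{(n)}$ and passes to the limit to get a root- and measure-preserving bijective isometry $\Phi:X\to Y$ (here $X=\bigcup_n X^{(n)}$ because $(X,d)$ is a length space, hence every point lies at finite distance from $\emptyset$). This gives $\cx=\cy$ in $\LL$.

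For (ii), separability is the easy half: a countable dense subset can be produced from the compact theory. Given $\cx\in\LL$, each $\cx^{(n)}\in\K$ can be approximated in $d^c_{\text{GHP}}$ by an element of a fixed countable dense set $\cd\subset\K$; choosing, for a finite-measure finite-geometry approximant at level $n$, a representative that is itself a finite metric space with rational distances and rational atomic measure, one checks that the corresponding elements of $\LL$ (or of $\K$, viewed via their profiles) approximate $\cx$ in $d_{\text{GHP}}$: the tail of the integral $\int_A^\infty\expp{-r}\,dr$ is small, and on $[0,A]$ one uses the approximation of $\cx^{(A)}$ together with the inequality $d^c_{\text{GHP}}(\cx^{(r)},\cz^{(r)})\leq 2\,d^c_{\text{GHP}}(\cx^{(A)},\cz^{(A)})$ (restriction is $1$-Lipschitz-ish, up to a harmless constant) for $r\leq A$. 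One must be slightly careful that truncations of length spaces need not be length spaces, so I would instead take $\cd$ inside $\LL$ directly, e.g. finite rational ``star-trees'' or piecewise-linear graphs, which are genuine length spaces in $\LL$; their profiles are dense.

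The real work is completeness, and this is where I expect the main obstacle. Let $(\cx_n)$ be $d_{\text{GHP}}$-Cauchy. The strategy is: (a) extract from Cauchyness that for each fixed $r$ (outside a countable set, then all $r$ by the c\`adl\`ag structure) the sequence $(\cx_n^{(r)})_n$ is Cauchy in $(\K,d^c_{\text{GHP}})$, hence converges to some $\cy_r\in\K$ by Theorem \ref{theo:dcGHP}(ii) — this step needs care because $d_{\text{GHP}}$-smallness only controls the integral, so I would first pass to a subsequence along which convergence of the integrand holds for a.e.\ $r$, and upgrade; (b) verify the compatibility $\cy_r^{(s)}=\cy_s$ for $s\leq r$, which should follow from continuity of restriction together with $\cx_n^{(r),(s)}=\cx_n^{(s)}$; (c) check the preconditions of the pre-compactness criterion Theorem \ref{GHP:PreComK} are satisfied uniformly in $n$ on each ball of radius $r$ — bounded diameter is automatic, the finite-$\varepsilon$-net bound and the uniform mass bound $\sup_n \mu_n(X_n^{(r)})<\infty$ must be deduced from Cauchyness (a Cauchy sequence is bounded, and a $d^c_{\text{GHP}}$-bounded subset of $\K$ whose elements share approximate geometry has uniformly bounded mass and covering numbers); (d) glue the increasing family $(\cy_r)_r$ into a single rooted weighted space $\cx_\infty=(X_\infty,d_\infty,\emptyset,\mu_\infty)$ by taking an inductive limit of the compact spaces $\cy_n$ along the isometric inclusions from (b), then complete it; (e) check $\cx_\infty\in\LL$, i.e. it is complete (by construction), locally compact and a length space — local compactness because each ball $X_\infty^{(r)}$ equals $Y_r$ which is compact, and the length-space property because it passes to limits under $d^c_{\text{GHP}}$ within the class of length spaces and is preserved by the gluing (this is exactly why the theorem is stated for length spaces, mirroring the Gromov-Hausdorff treatment in \cite{Burago2001}); and finally (f) conclude $d_{\text{GHP}}(\cx_n,\cx_\infty)\to 0$ by dominated convergence, since $1\wedge d^c_{\text{GHP}}(\cx_n^{(r)},\cx_\infty^{(r)})\leq 1$ and $\to 0$ for a.e.\ (hence, via monotonicity in the subsequence argument, along the full sequence by the usual Cauchy-plus-subsequential-limit trick). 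The hardest point is (d)–(e): making the gluing of the $\cy_r$'s canonical requires the uniqueness/rigidity of the restriction isometries discussed above, and checking that the inductive limit is again a length space (not merely a metric space) is the step that genuinely uses the Hopf-Rinow theorem and the mid-point characterization of length spaces quoted before Definition \ref{defi:L}.
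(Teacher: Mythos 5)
Your outline of the metric axioms and of separability matches the paper's (Proposition \ref{prop:distance} and Lemma \ref{lem:sep}), but two of the ingredients you lean on are genuinely false or unproved, and they sit at the load-bearing points. First, there is no ``rigidity'' of GHP-isometries between compact rooted weighted spaces: any space with a nontrivial root- and measure-preserving self-isometry (a symmetric tree, a circle rooted at a point with uniform measure) admits many GHP-isometries, so the maps $\Phi_n:X^{(r_n)}\to Y^{(r_n)}$ obtained from $d^c_{\text{GHP}}(\cx^{(r_n)},\cy^{(r_n)})=0$ need not be compatible under restriction, and neither your construction of a global $\Phi$ in the positive-definiteness step nor the inductive limit in your step (d) is well defined as described. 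The paper's way around this is the missing idea: evaluate \emph{all} the $\Phi^j$, $j\ge n$, on a fixed countable union of finite $1/k$-nets of the balls $X^{(r_n)}$, extract by a diagonal argument a pointwise limit on this dense set, check it is a root-preserving isometry, extend by density, and only then verify $\Phi_*\mu=\nu$ via discretized measures (see the proofs of Lemma \ref{lem:dc=0} and Proposition \ref{prop:distance}).

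Second, your compatibility step (b) assumes that restriction $\cx\mapsto\cx^{(s)}$ is continuous under $d^c_{\text{GHP}}$-convergence; it is not, at any radius $s$ where the limit measure charges the sphere $\partial_s X$ (this is exactly the phenomenon behind Lemma \ref{lem:reg-d-GHP} and the exceptional set $A_\nu$ in the paper), and moreover the balls $\cx_n^{(r)}$ of a length space need not be length spaces, so ``the length property passes to the limit of the $\cy_r$'' requires the separate mid-point argument of Lemma \ref{lem:X=length-space} rather than closedness of length spaces under $d^c_{\text{GHP}}$. The paper's completeness proof takes a different and cleaner route that you should compare with: it does not build the limit from the slicewise limits $\cy_r$ at all, but shows a Cauchy sequence satisfies the hypotheses of the pre-compactness criterion Theorem \ref{GHP:PreComL} --- the uniform mass bound on balls via \reff{eq:diamP}, and the uniform covering numbers by observing that $g_{n,m}(r)=d^c_{\text{GH}}(X_n^{(r)},X_m^{(r)})$ is $2$-Lipschitz in $r$ (so integral smallness upgrades to pointwise smallness for \emph{every} $r$, sidestepping your a.e.\ extraction) and then invoking Proposition 7.4.11 of \cite{Burago2001}. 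The one-off construction of an abstract limit space from $2^{-k}$-nets of annuli, with the careful treatment of boundary spheres and of the measure, is then done in the proof of Theorem \ref{GHP:PreComL}; your proposal essentially attempts to redo that construction but without the devices (correspondences, distortions, the set $A_\nu$) that make it work.
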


The next result implies that $d_{\text{GHP}}^c$ and $d_{\text{GHP}}$
define the same topology on $\K\cap \LL$. 
\begin{prop}
 \label{prop:K-LL} Let $(\cx_n, n\in \N)$ and $\cx$ be elements of $\K\cap
 \LL$. Then the sequence $(\cx_n, n\in \N)$ converges to $\cx$ in
 $(\K,d_{\text{GHP}}^c)$ if and only if it converges to $\cx$ in
 $(\LL,d_{\text{GHP}})$. 
\end{prop}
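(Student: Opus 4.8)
The plan is to compare the two notions of convergence by exploiting the fact that for $\cx, \cy \in \K\cap\LL$, once $r$ exceeds the (finite) diameters of both spaces, the restriction $\cx^{(r)}$ equals $\cx$ itself, so that $d^c_{\text{GHP}}(\cx^{(r)},\cy^{(r)}) = d^c_{\text{GHP}}(\cx,\cy)$ for all $r$ large enough. The one genuinely delicate point is the behavior of $r\mapsto d^c_{\text{GHP}}(\cx^{(r)},\cy^{(r)})$ for small and moderate $r$, and in particular the fact established in Lemma \ref{lem:reg-d-GHP} that this map is c\`adl\`ag (hence bounded on compacts).

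First, I would prove the easy direction: convergence in $(\K, d^c_{\text{GHP}})$ implies convergence in $(\LL, d_{\text{GHP}})$. For this I would show that the restriction operation is $1$-Lipschitz, i.e.\ $d^c_{\text{GHP}}(\cx^{(r)},\cy^{(r)}) \le d^c_{\text{GHP}}(\cx,\cy)$ for every $r\ge 0$ (a near-isometric embedding realizing $d^c_{\text{GHP}}(\cx,\cy)$ up to $\epsilon$ restricts to one between $\cx^{(r)}$ and $\cy^{(r)}$, using that the embeddings preserve the distance to the root and behave well with respect to closed balls, so the Hausdorff and Prokhorov contributions only decrease — this needs a short but careful argument on how halo sets and measures restrict). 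Granting this, $1\wedge d^c_{\text{GHP}}(\cx_n^{(r)},\cx^{(r)}) \le 1\wedge d^c_{\text{GHP}}(\cx_n,\cx)$ pointwise in $r$, and integrating against $\expp{-r}\,dr$ gives $d_{\text{GHP}}(\cx_n,\cx) \le d^c_{\text{GHP}}(\cx_n,\cx) \to 0$.

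For the converse, suppose $d_{\text{GHP}}(\cx_n,\cx)\to 0$; I want $d^c_{\text{GHP}}(\cx_n,\cx)\to 0$. Fix $R$ larger than $\diam(X) + d^X(\emptyset,\cdot)$-type bounds for $\cx$, so that $\cx^{(R)}=\cx$. From $d_{\text{GHP}}(\cx_n,\cx)\to 0$ one extracts, for a.e.\ $r$, that $d^c_{\text{GHP}}(\cx_n^{(r)},\cx^{(r)})\to 0$ along a subsequence; the real work is to upgrade this to convergence at the \emph{specific} level $r=R$ with no subsequence. The strategy I would use is: pick $r$ slightly below $R$ (so $\cx^{(r)} = \cx$ still, since there is no mass or geometry of $\cx$ between $r$ and $R$) at which $d^c_{\text{GHP}}(\cx_n^{(r)},\cx)\to 0$ — such $r$ exists because the set of ``good'' $r$ has full Lebesgue measure near $R$ by the integral convergence plus a Fatou/subsequence argument applied to the nonnegative integrands. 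Then I must control the gap between $d^c_{\text{GHP}}(\cx_n^{(r)},\cx)$ and $d^c_{\text{GHP}}(\cx_n,\cx) = d^c_{\text{GHP}}(\cx_n^{(R)},\cx^{(R)})$, i.e.\ bound $d^c_{\text{GHP}}(\cx_n^{(r)},\cx_n^{(R)})$ uniformly in $n$ by something small when $R-r$ is small; the c\`adl\`ag regularity from Lemma \ref{lem:reg-d-GHP}, together with the Lipschitz restriction bound, should give $d^c_{\text{GHP}}(\cx_n^{(r)},\cx_n^{(R)}) \le d^c_{\text{GHP}}(\cx_n,\cx)$-type estimates — but since that is exactly the quantity we are trying to bound, this is circular, and instead I would argue directly that the mass $\mu_n(X_n^{(R)}\setminus X_n^{(r)})$ and the Hausdorff gap between $X_n^{(r)}$ and $X_n^{(R)}$ are both small once $n$ is large, using that $\cx_n^{(r)}$ is close to $\cx=\cx^{(R)}$ and a $3\epsilon$-type argument.

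The main obstacle, and where I would spend most of the effort, is this last step: passing from ``convergence of restrictions at a.e.\ level $r$'' to ``convergence at the top level $R$'' without a subsequence, which requires a uniform-in-$n$ tightness estimate on how much of $\cx_n$ sits in the annulus between radius $r$ and radius $R$. I expect the clean way is to exploit that $\cx\in\K$ has bounded support so $\cx^{(r)}=\cx^{(R)}=\cx$ for a whole interval $(r_0, R]$, run the a.e.-convergence argument to find a good $r$ in that interval, and then show $d^c_{\text{GHP}}(\cx_n,\cx)\le d^c_{\text{GHP}}(\cx_n^{(r)},\cx) + (\text{annulus term}_n)$ with the annulus term $\to 0$ because otherwise a subsequence of $\cx_n^{(r)}$ would fail to converge to $\cx$, contradicting what we have. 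Combining both directions yields the stated equivalence of topologies on $\K\cap\LL$.
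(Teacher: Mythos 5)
Your plan founders on the claim that restriction is $1$-Lipschitz, i.e.\ that $d^c_{\text{GHP}}(\cx^{(r)},\cy^{(r)})\le d^c_{\text{GHP}}(\cx,\cy)$. This is false: take $\cx=([0,1],|\cdot|,0,\delta_1)$ and $\cy=([0,1],|\cdot|,0,\delta_{1-\epsilon})$, two elements of $\K\cap\LL$ with $d^c_{\text{GHP}}(\cx,\cy)\le\epsilon$; for $r=1-\epsilon/2$ the restricted measures have total masses $0$ and $1$, so $d^c_{\text{GHP}}(\cx^{(r)},\cy^{(r)})\ge 1$ by \reff{eq:diamP}. The Prokhorov contribution does \emph{not} ``only decrease'' under restriction, because mass sitting near the sphere $\partial_r X$ can be captured by one restriction and missed by the other. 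Consequently the direction you call easy ($d^c_{\text{GHP}}$-convergence implies $d_{\text{GHP}}$-convergence) is in fact the substantive one, and your proposal contains no valid argument for it. The correct estimate, which is the heart of the paper's proof, is of the form $d^c_{\text{GHP}}(\cx_n^{(r)},\cx^{(r)})\le 3\,d^c_{\text{GHP}}(\cx_n,\cx)+\mu(X^{(r+2\epsilon_n)}\setminus X^{(r-2\epsilon_n)})$, where the factor $3$ on the Hausdorff/root part uses the length-space property through Lemma \ref{lem:couronne}, and the unavoidable annulus term tends to $0$ only when $\mu(\partial_r X)=0$. Since $\mu$ is finite this fails for at most countably many $r$, and one concludes by dominated convergence in the integral defining $d_{\text{GHP}}$ --- pointwise domination by $d^c_{\text{GHP}}(\cx_n,\cx)$ is simply not available.

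By contrast, the direction on which you spend most of your effort is the easy one, and your route to it is more complicated than necessary. For $r\ge\max(R_n,R)$, where $R_n$ and $R$ are the radii of $X_n$ and $X$, one has $\cx_n^{(r)}=\cx_n$ and $\cx^{(r)}=\cx$, so the integrand equals $\expp{-r}\bigl(1\wedge d^c_{\text{GHP}}(\cx_n,\cx)\bigr)$ there; integrating gives $d_{\text{GHP}}(\cx_n,\cx)\ge \expp{-\max(R_n,R)}\bigl(1\wedge d^c_{\text{GHP}}(\cx_n,\cx)\bigr)$ with no almost-everywhere or subsequence extraction needed. The only point to verify is that the $R_n$ are eventually bounded, which follows because if $R_n>R+1$ then the root-plus-Hausdorff part of $d^c_{\text{GHP}}(\cx_n^{(r)},\cx^{(r)})$ is at least $r-R$ for all $r\in(R+1,R_n)$, contradicting $d_{\text{GHP}}(\cx_n,\cx)\to 0$. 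So the overall architecture should be inverted: the ``a.e.\ $r$'' machinery and the annulus-mass control belong to the implication from $d^c_{\text{GHP}}$ to $d_{\text{GHP}}$, not the other way around.
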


Finally, we give a pre-compactness criterion on $\LL$ which is a
generalization of the well-known compactness theorem for compact metric
spaces, see for instance Theorem 7.4.15 in \cite{Burago2001}.

\begin{theorem}
\label{GHP:PreComL}
 Let $\cc$ be a subset of $\LL$, such that for every $r\ge 0$: 
\begin{itemize}
\item[(i)] For every $\varepsilon>0$, there exists a finite integer
 $N(r,\epsilon)\ge 1$, such that for any $(X,d,\emptyset,\mu) \in \cc$,
 there is an $\varepsilon$-net of $X^{(r)}$ with cardinal less than
 $N(r,\varepsilon)$. 
 \item[(ii)] We have $\sup_{(X,d,\emptyset,\mu) \in \cc} \mu(X^{(r)} )<+\infty
 $. 
\end{itemize}
Then, $\cc$ is relatively compact: every sequence in $\cc$ admits a
sub-sequence that converges in the $d_{\text{GHP}}$ topology.
\end{theorem}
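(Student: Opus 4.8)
The plan is to reduce the locally compact pre-compactness statement to the compact one (Theorem \ref{GHP:PreComK}) applied at each radius $r$, and then use a diagonal extraction together with a consistency argument to produce a limiting space in $\LL$. First I would fix a sequence $(\cx_n = (X_n, d_n, \emptyset_n, \mu_n), n \in \N)$ in $\cc$. For each integer $k \ge 1$, consider the family $\{\cx_n^{(k)} : n \in \N\}$ of closed balls of radius $k$, which by the Hopf-Rinow theorem are elements of $\K$. Hypotheses (i) and (ii) of Theorem \ref{GHP:PreComL}, specialized to $r = k$, say exactly that this family satisfies conditions (ii) and (iii) of Theorem \ref{GHP:PreComK}; condition (i) of Theorem \ref{GHP:PreComK} (uniformly bounded diameter) holds trivially since $\diam(X_n^{(k)}) \le 2k$. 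Hence for each $k$ the family $\{\cx_n^{(k)}\}$ is relatively compact in $(\K, d^c_{\text{GHP}})$. A diagonal argument then yields a subsequence, still denoted $(\cx_n)$, such that for every $k \ge 1$ the sequence $(\cx_n^{(k)}, n \in \N)$ converges in $(\K, d^c_{\text{GHP}})$ to some limit $\cy_k = (Y_k, \delta_k, \emptyset_k, \nu_k) \in \K$.

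Next I would establish a compatibility relation between the limits $\cy_k$ at different radii. The key observation is that the restriction map $\cx \mapsto \cx^{(r)}$ interacts well with $d^c_{\text{GHP}}$: passing to a common embedding $Z$ realizing (approximately) $d^c_{\text{GHP}}(\cx_n^{(k)}, \cy_k)$, one checks that $d^c_{\text{GHP}}\bigl((\cx_n^{(k)})^{(j)}, \cy_k^{(j)}\bigr)$ is controlled by $d^c_{\text{GHP}}(\cx_n^{(k)}, \cy_k)$ for $j \le k$, with a correction coming from the fact that the ball of radius $j$ inside a perturbed space may differ slightly from the perturbed ball of radius $j$ — this is precisely the kind of estimate underlying Lemma \ref{lem:reg-d-GHP}. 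Since $(\cx_n^{(k)})^{(j)} = \cx_n^{(j)} \to \cy_j$, one deduces $\cy_k^{(j)} = \cy_j$ for all $j \le k$ (at least for a.e.\ $j$, hence for all $j$ by the c\`adl\`ag regularity of $r \mapsto d^c_{\text{GHP}}(\cy_k^{(r)}, \cdot)$, or after a further countable refinement). This consistency lets me glue the $Y_k$ into a single rooted space: define $Y = \bigcup_k Y_k$ with the obvious metric (each $Y_k$ isometrically embedded as the ball of radius $k$ in $Y_{k+1}$), take its completion, root it at the common $\emptyset$, and define $\mu$ on $Y$ by $\mu(\cdot \cap Y^{(k)}) = \nu_k$, which is consistent and yields a locally finite measure.

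Then I must verify that $\cy := (Y, d^Y, \emptyset, \mu)$ actually lies in $\LL$, i.e.\ that it is complete, locally compact, and a length space. Completeness is built in; local compactness follows because each closed ball $Y^{(k)} = Y_k$ is compact (being in $\K$); the length space property is the delicate point and I would obtain it as a \emph{limit} property: each $\cx_n$ is a length space, hence satisfies the mid-point condition, and the mid-point condition on balls of radius $k$ is (approximately) preserved under $d^c_{\text{GHP}}$-convergence, so $\cy_k$ satisfies the mid-point condition, so $Y$ does — whence $Y$ is a length space by Theorem 2.4.16 of \cite{Burago2001}. Finally I would show $d_{\text{GHP}}(\cx_n, \cy) \to 0$: since $\cx_n^{(r)} = (\cx_n^{(\lceil r \rceil)})^{(r)}$ and the restriction is $d^c_{\text{GHP}}$-continuous in the sense above, $d^c_{\text{GHP}}(\cx_n^{(r)}, \cy^{(r)}) \to 0$ for a.e.\ $r$, and since the integrand $\expp{-r}(1 \wedge d^c_{\text{GHP}}(\cx_n^{(r)}, \cy^{(r)}))$ is bounded by $\expp{-r}$, dominated convergence gives $d_{\text{GHP}}(\cx_n, \cy) \to 0$.

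The main obstacle I expect is the consistency step: proving rigorously that $\cy_k^{(j)} = \cy_j$ for $j \le k$, and more precisely controlling how the ball of radius $j$ behaves under a small $d^c_{\text{GHP}}$-perturbation — two spaces that are GHP-close may have balls of radius $j$ that are \emph{not} GHP-close at the single radius $j$ if mass or geometry accumulates on the sphere of radius $j$, which is exactly why Lemma \ref{lem:reg-d-GHP} only gives c\`adl\`ag (not continuous) dependence. Handling this will require working with radii outside an exceptional countable set and then upgrading via the regularity lemma, and carefully choosing the embeddings so that balls are mapped to (near-)balls. The length-space hypothesis is what makes this work, since it forbids the pathological "thin tube past radius $j$" behaviour that would break local compactness of the limit.
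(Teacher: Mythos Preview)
Your approach is genuinely different from the paper's. Rather than invoking Theorem \ref{GHP:PreComK}, the paper builds the limit space directly: it fixes uniformly bounded $2^{-k}$-nets of the balls $X_n^{(\ell 2^{-k})}$ (via Lemma \ref{KNetCouronne}), indexes them by a common countable set $U$, extracts a subsequence along which all pairwise net distances converge, and defines $X$ as the completion of the resulting abstract metric space; the limit measure is obtained as a vague limit of discretized measures pushed forward to $X$, and the convergence $d^c_{\text{GHP}}(\cx_{n_k}^{(r)},\cx^{(r)})\to 0$ is then established for a.e.\ $r$ through a chain of six explicit triangle-inequality comparisons. Your black-box-plus-gluing strategy is more modular and conceptually cleaner; the paper's direct construction is heavier in bookkeeping but sidesteps the consistency and gluing issues entirely, since there is only one limit object from the outset.

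One step in your plan is misstated. You claim that ``the mid-point condition on balls of radius $k$ is preserved under $d^c_{\text{GHP}}$-convergence, so $\cy_k$ satisfies the mid-point condition''. But a closed ball $X_n^{(k)}$ in a length space, equipped with the \emph{restricted} metric, need not satisfy the mid-point condition: the midpoint of two points near $\partial_k X_n$ may well lie outside $X_n^{(k)}$, so there is nothing to pass to the limit at the level of a single $\cy_k$. The repair is essentially what the paper does in Lemma \ref{lem:X=length-space}: for $y,y'\in Y^{(k)}$, approximate by points of $X_n^{(k)}$, take an approximate midpoint \emph{in the full space $X_n$} (which lies in $X_n^{(2k)}$ by the triangle inequality), and pass to the limit inside $\cy_{2k}$ using your consistency relation. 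Thus the length-space property of $Y$ is a global statement that uses several $\cy_k$ simultaneously; it is not a property of each $\cy_k$ in isolation.

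Regarding the consistency obstacle you flag: you are right that this is where the length-space hypothesis on the $X_n$ does its work, and the estimate from Proposition \ref{prop:K-LL} (driven by Lemma \ref{lem:couronne}) is indeed what is needed. Note, however, that to run that argument with $\cy_k$ in the role of the limit you must first check that the couronne inclusion $Y_k^{(j+\epsilon)}\subset (Y_k^{(j)})^{\epsilon+\delta}$ itself passes from the $X_n$ to $Y_k$; this holds by a short compactness argument (approximate $y\in Y_k^{(j+\epsilon)}$ by $x_n\in X_n$, apply Lemma \ref{lem:couronne} in $X_n$, and extract a limit in the compact $Y_k$), even though $Y_k$ need not be a length space. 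Once this is in hand, consistency holds for all $j$ outside the countable set where $\nu_k(\partial_j Y_k)>0$, and since you only need \emph{some} sequence $r_m\uparrow\infty$ along which to glue, you can choose the $r_m$ after the $\cy_k$ are constructed.
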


\section{Application to real trees coded by functions}
\label{sec:code}

A metric space $(T,d)$ is a called real tree (or
$\R$-tree) if the following properties are satisfied:
\begin{itemize}
	\item[(i)] For every $s,t\in T$, there is a unique isometric map
$f_{s,t}$
from $[0,d(s,t)]$ to $T$ such that $f_{s,t}(0)=s$ and $f_{s,t}(d(s,t))=t$. 
	\item[(ii)] For every $s,t\in T$, if $q$ is a continuous injective map
from
$[0,1]$ to $T$ such that $q(0)=s$ and $q(1)=t$, then
$q([0,1])=f_{s,t}([0,d(s,t)])$. 
\end{itemize}
Note that real trees are always length spaces and that complete real
trees are the only complete connected spaces that satisfy the so-called
four-point condition:
\begin{equation}
 \forall x_1,x_2,x_3,x_4 \in X,\ d(x_1,x_2)+d(x_3,x_4) \le
(d(x_1,x_3)+d(x_2,x_4) ) \vee (d(x_1,x_4)+d(x_2,x_3)). 
\end{equation}

We say that a real tree is rooted if there is a distinguished vertex
$\emptyset$, which will be called the root of $T$. 

\begin{definition} 
 We denote by $\T$ the set of (GHP-isometry classes of) rooted, weighted,
complete and locally compact real trees, in short w-trees.
\end{definition}

We deduce the following Corollary from Theorem \ref{theo:LL} and the
four-point condition characterization of real trees. 

\begin{cor}
 \label{cor:T}
The set $\T$ is a closed subset of $\LL$ and 
$(\T,d_{\text{GHP}})$ is a Polish metric space. 
\end{cor}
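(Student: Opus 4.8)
\textbf{Proof proposal for Corollary \ref{cor:T}.}

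The plan is to combine Theorem \ref{theo:LL} with the four-point condition, which characterizes complete real trees among complete connected length spaces. By Theorem \ref{theo:LL}, $(\LL, d_{\text{GHP}})$ is a Polish metric space; a closed subset of a Polish space is itself Polish (for the induced metric), so once I show that $\T$ is closed in $\LL$ the second assertion follows immediately. Thus the entire content is: \emph{$\T$ is closed in $(\LL, d_{\text{GHP}})$}. Note that since every element of $\LL$ is a complete locally compact length space, it is automatically connected (a length space is connected), so membership in $\T$ is equivalent to the four-point condition holding on the underlying space. The strategy is therefore to take a sequence $(\cx_n, n \in \N)$ in $\T$ converging to some $\cx = (X, d, \emptyset, \mu) \in \LL$ and to prove that $X$ satisfies the four-point condition.

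First I would reduce the four-point condition to the restricted spaces: the condition for a given quadruple $x_1, x_2, x_3, x_4 \in X$ only involves points within distance $R := \max_i d(\emptyset, x_i)$ of the root, hence it suffices to show that each $X^{(r)}$ satisfies the four-point condition for every $r \ge 0$ (being careful that $d^{(r)}$ is just the restriction of $d$, so distances are unchanged). By the definition of $d_{\text{GHP}}$ as $\int_0^\infty \expp{-r}(1 \wedge d^c_{\text{GHP}}(\cx^{(r)}, \cy^{(r)}))\,dr$, convergence $d_{\text{GHP}}(\cx_n, \cx) \to 0$ forces $d^c_{\text{GHP}}(\cx_n^{(r)}, \cx^{(r)}) \to 0$ for Lebesgue-almost every $r$; combined with the c�dl�g regularity from Lemma \ref{lem:reg-d-GHP} and a standard argument (approaching any $r$ from the right through a.e.-good values, or invoking the dominated convergence / a.e. convergence of integrands that are integrated against $\expp{-r}\,dr$), I can ensure that for every $r \ge 0$ there is a sequence $r_k \downarrow r$ with $d^c_{\text{GHP}}(\cx_n^{(r_k)}, \cx^{(r_k)}) \to 0$ as $n \to \infty$ for each fixed $k$. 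Actually it is cleaner to argue: fix $r$; pick $r' > r$ with $d^c_{\text{GHP}}(\cx_n^{(r')}, \cx^{(r')}) \to 0$; then $\cx_n^{(r')} \in \T$ (restrictions of real trees satisfy the four-point condition), and it suffices to handle convergence in $\K$ of these restricted spaces.

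So the core step is: \emph{the four-point condition is closed under $d^c_{\text{GHP}}$-convergence in $\K$}. To see this, let $\cy_n \to \cy$ in $(\K, d^c_{\text{GHP}})$ with each $\cy_n$ satisfying the four-point condition. By definition of $d^c_{\text{GHP}}$, for each $n$ there is a common compact Polish space $Z_n$ with isometric embeddings of $\cy_n$ and $\cy$ whose images are Hausdorff-close (within $\epsilon_n \to 0$). Given $x_1, x_2, x_3, x_4 \in Y$ and $\delta > 0$, choose for each $i$ a point $x_i^n \in Y_n$ whose image in $Z_n$ is within $\epsilon_n + \delta$ of the image of $x_i$; then $|d^{Y_n}(x_i^n, x_j^n) - d^Y(x_i, x_j)| \le 2(\epsilon_n + \delta)$ by the triangle inequality in $Z_n$. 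Applying the four-point inequality to $x_1^n, \dots, x_4^n$ in $Y_n$ and letting $n \to \infty$ (then $\delta \to 0$) yields the four-point inequality for $x_1, \dots, x_4$ in $Y$. Hence $Y$ satisfies the four-point condition, and since $Y$ is complete and connected, $\cy \in \T$. Unwinding the reductions, $\cx^{(r')} \in \T$; letting $r' \downarrow r$ along admissible values and using that the four-point condition on each $X^{(r')}$ transfers down to $X^{(r)} \subset X^{(r')}$, we get that $X$ satisfies the four-point condition, so $\cx \in \T$.

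The only mildly delicate point — and the one I expect to require the most care — is the passage from ``$d^c_{\text{GHP}}$-convergence of restrictions for a.e.\ $r$'' to a statement about the fixed space $X$ itself; everything else is a routine $\epsilon$-chase through a common embedding. This is handled either by Lemma \ref{lem:reg-d-GHP} (right-continuity lets one work with a right-dense set of good radii) or simply by noting that the four-point condition at a quadruple at root-distance $\le R$ holds in $X$ as soon as it holds in $X^{(r')}$ for a single $r' > R$, and such $r'$ can be chosen among the full-measure set of radii along which the restrictions converge. Once $\T$ is shown closed, Theorem \ref{theo:LL}(ii) gives that $(\T, d_{\text{GHP}})$ is Polish, completing the proof. $\hfill\qed$
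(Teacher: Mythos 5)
Your proof is correct and follows the same route the paper intends (the paper gives no written proof, merely deducing the corollary from Theorem \ref{theo:LL} together with the four-point characterization of complete real trees): stability of the four-point condition under $d^c_{\text{GHP}}$-limits via a common embedding, reduction to the restrictions $X^{(r)}$, and the fact that a closed subset of a Polish space is Polish. The only cosmetic point is that $L^1(\expp{-r}\,dr)$-convergence of $r\mapsto 1\wedge d^c_{\text{GHP}}(\cx_n^{(r)},\cx^{(r)})$ yields almost-everywhere convergence in $r$ only along a subsequence, which is harmless here since closedness may be verified along subsequences.
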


Let $f$ be a continuous non-negative function defined on $[0,+\infty)$,
such that $f(0)=0$, with compact support. We set:
\[
\sigma^f=\sup\{t; f(t)>0\},
\]
with the convention $\sup\emptyset=0$. Let $d^f$ be the non-negative
function defined by:
\[ 
d^f(s,t) = f(s) + f(t) - 2 \inf_{u\in [s\wedge t , s\vee t]} f(u). 
\]
It can be easily checked that $d^f$ is a semi-metric on $[0,\sigma^f]$. One can
define the equivalence relation associated with $d^f$ by $s\sim t$ if and only
if
$d^f(s,t)=0$. Moreover, when we consider the quotient space
\begin{equation*}
 T^f=[0,\sigma^f]_{/ \sim}
\end{equation*}
and, noting again $d^f$ the induced metric on $T^f$ and rooting $T^f$ at
$\emptyset^f$, the equivalence class of 0, it can be checked that the
space $(T^f,d^f,\emptyset^f)$ is a rooted compact real tree. We denote
by $p^f$ the canonical projection from $[0,\sigma^f]$ onto $T^f$, which
is extended by $p^f(t)=\emptyset^f$ for $t\geq \sigma^f$. Notice that
$p^f$ is continuous. We define $\mathbf{m}^{f}$, the Borel measure on
$T^f$ as the image measure on $T^f$ of the Lebesgue measure on $[0,
\sigma^f]$ by $p^f$. We consider the (compact) w-tree $\ct^f=(T^f, d^f,
\emptyset^f, \mathbf{m}^f)$.

We have the following elementary result (see Lemma 2.3 of
\cite{Duquesne2005a} when dealing with the Gromov-Hausdorff metric instead of
the Gromov-Hausdorff-Prokhorov metric). For a proof, see \cite{Abraham2011a}.
\begin{prop}
 \label{prop:cont-TH}
 Let $f,g$ be two compactly supported, non-negative continuous functions with
$f(0)=g(0)=0$. Then, we have:
\begin{equation}
 d_{\text{GHP}}^c(\ct^f,\ct^g) \le 6 \| f-g \|_\infty + | \sigma^f-\sigma^g |.
\end{equation}
\end{prop}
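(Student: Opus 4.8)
The plan is to build a single metric space $Z$ into which both $\ct^f$ and $\ct^g$ embed isometrically, and then bound separately the three terms appearing in the infimum defining $d^c_{\text{GHP}}$. The natural choice is to take $Z = [0,\sigma]$ with $\sigma = \sigma^f \vee \sigma^g$, equipped with a metric $\delta$ that dominates both $d^f$ and $d^g$; a convenient candidate is
\[
\delta(s,t) = |f(s) - g(s)| + |f(t) - g(t)| + \frac{1}{2}\left( d^f(s,t) + d^g(s,t) \right),
\]
or a variant thereof, so that $\delta$ restricts (after passing to the quotient) to something comparable to $d^f$ on the image of $T^f$ and to $d^g$ on the image of $T^g$. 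Since both quotient maps $p^f$ and $p^g$ are continuous and the trees are compact, the images are compact subsets of $(Z,\delta)$, and the push-forwards of the two copies of Lebesgue measure on $[0,\sigma]$ give the measures $\mathbf{m}^f$ and $\mathbf{m}^g$.

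First I would handle the root term: since both roots are the class of $0$, $\delta$ between the two images of $0$ is controlled by $|f(0)-g(0)| = 0$ plus curve-length contributions, which should give something of order $\|f-g\|_\infty$. Next, the Hausdorff term: for a point $x = p^f(t) \in T^f$, its image in $Z$ should be within distance a multiple of $\|f-g\|_\infty$ (with a possible $|\sigma^f - \sigma^g|$ correction coming from the tails where only one of $f,g$ is supported) of the point $p^g(t) \in T^g$; the standard estimate $|d^f(s,t) - d^g(s,t)| \le 4\|f-g\|_\infty$ is the key inequality here, obtained by writing $d^f(s,t) = f(s) + f(t) - 2\inf f$ and comparing term by term. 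Finally, the Prokhorov term: because both measures are images of Lebesgue measure on nearly the same interval under maps that are uniformly close in $Z$, one couples them via the identity map on $[0,\sigma^f \wedge \sigma^g]$ and dumps the remaining mass $|\sigma^f - \sigma^g|$ arbitrarily; this shows $d^Z_{\text{P}}(\mathbf{m}^f, \mathbf{m}^g)$ is bounded by the uniform displacement of the maps plus $|\sigma^f - \sigma^g|$.

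The main obstacle is bookkeeping the constant $6$: one must be careful that the metric $\delta$ chosen genuinely restricts to $d^f$ and $d^g$ on the respective tree images (not merely dominates them), since the definition of $d^c_{\text{GHP}}$ requires \emph{isometric} embeddings, not just $1$-Lipschitz ones. The clean way around this is to define $\delta$ on the disjoint union $T^f \sqcup T^g$ directly — setting $\delta = d^f$ within $T^f$, $\delta = d^g$ within $T^g$, and for $x\in T^f$, $y\in T^g$ putting $\delta(x,y) = \inf\{ d^f(x, p^f(t)) + |f(t)-g(t)| + \|f-g\|_\infty + d^g(p^g(t), y) : t \in [0,\sigma]\}$ or a similar infimum-over-a-bridge formula — and then checking the triangle inequality and that the cross-distances are not so small as to violate it. Once $\delta$ is a bona fide metric restricting correctly, the three estimates above combine, and tracking which term contributes $2\|f-g\|_\infty$ (Hausdorff, via the factor-$2$ in the infimum, plus root, plus the bridge term) versus which contributes $|\sigma^f - \sigma^g|$ yields the stated bound $6\|f-g\|_\infty + |\sigma^f - \sigma^g|$. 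Since the paper defers the detailed verification to \cite{Abraham2011a}, I would present the construction of $(Z,\delta)$ and the three key inequalities, and leave the constant-chasing as routine.
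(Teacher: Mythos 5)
The paper gives no proof of this proposition (it is deferred to \cite{Abraham2011a}), so there is nothing in the text to compare against; your sketch follows the standard route --- a correspondence between the two coded trees realized by a bridge metric on the disjoint union $T^f\sqcup T^g$ --- and it does work, with the stated constant. Your first candidate, $Z=[0,\sigma]$ with an averaged metric, should simply be discarded, as you yourself note: the definition of $d^c_{\text{GHP}}$ requires isometric embeddings, and only the disjoint-union construction achieves this. The one step that is genuinely not routine is the triangle inequality for the bridge metric, and it is worth recording why your variable bridge constant $c(t)=|f(t)-g(t)|+\|f-g\|_\infty$ suffices: the distortion bound refines to $|d^f(s,t)-d^g(s,t)|\le |f(s)-g(s)|+|f(t)-g(t)|+2\|f-g\|_\infty=c(s)+c(t)$, which is exactly the inequality needed when two bridge crossings are concatenated (alternatively, the constant bridge $2\|f-g\|_\infty$, i.e.\ half the distortion $4\|f-g\|_\infty$ of the correspondence $\{(p^f(t),p^g(t));\ t\ge 0\}$, works with no extra computation). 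The accounting is then clean, and cleaner than your narrative suggests: the root term is at most $c(0)\le 2\|f-g\|_\infty$, the Hausdorff term is at most $\sup_t c(t)\le 2\|f-g\|_\infty$ since $\delta(p^f(t),p^g(t))\le c(t)$ for every $t$, and the Prokhorov term is at most $2\|f-g\|_\infty+|\sigma^f-\sigma^g|$, the last summand coming from the Lebesgue mass on $[\sigma^f\wedge\sigma^g,\sigma^f\vee\sigma^g]$ left uncoupled by the identity coupling; summing gives $6\|f-g\|_\infty+|\sigma^f-\sigma^g|$. So the bound is correct, though your attribution of the factor $6$ (``the factor-$2$ in the infimum, plus root, plus the bridge term'' all inside the Hausdorff estimate) is garbled: it is $2\|f-g\|_\infty$ from each of the three terms in the infimum defining $d^c_{\text{GHP}}$.
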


This result and Proposition \ref{prop:K-LL} ensure that the map
$f\mapsto \ct^f$ (defined on the space of continuous functions with compact
support which vanish at 0, with the uniform topology) taking
values in $(\T\cap \K, d^c_{\text{GHP}})$ or $(\T, d_{\text{GHP}})$ is
measurable.

\section{Gromov-Hausdorff-Prokhorov metric for compact metric
 spaces} 
\label{sec:K}

\subsection{Proof of (i) of Theorem \ref{theo:dcGHP}}
In this Section, we shall prove 
that $d_{\text{GHP}}^c$ defines a metric on $\K$. 

First, we will prove the following technical lemma, which is a
generalization of Remark 7.3.12 in \cite{Burago2001}. Let $\cx=(X,d^X,
\emptyset^X,\mu^X)$ and $\cy=(Y, d^Y,\emptyset^Y, \mu^Y)$ be two
elements of $\K$. We will use the notation $X\sqcup Y$ for the disjoint
union of the sets $X$ and $Y$. We will abuse notations and note
$X,\mu^X,\emptyset^X$ and $Y,\mu^Y,\emptyset^Y$ the images of
$X,\mu^X,\emptyset^X$ and of $Y,\mu^Y,\emptyset^Y$ respectively by
the canonical embeddings $X \hookrightarrow X\sqcup Y$ and $Y\hookrightarrow
X\sqcup Y$.

\begin{lemma} 
\label{df:uniondisjointe} 
Let $\cx=(X,d^X,
 \emptyset^X,\mu^X)$ and $\cy=(Y, d^Y,\emptyset^Y, \mu^Y)$ be two
 elements of $\K$. Then, we have:
\begin{equation} 
 d_{\text{GHP}}^c(\cx,\cy) = \inf_{d} \left\{
d(\emptyset^X,\emptyset^Y) + 
d_\text{H}^d(X,Y) + d_\text{P}^d ( \mu^X,\mu^Y) \right\},
\end{equation}
where the infimum is taken over all metrics $d$ on $X\sqcup Y$ such that the
canonical embeddings $X \hookrightarrow X\sqcup Y$ and $Y\hookrightarrow X\sqcup
Y$ are isometries. 
\end{lemma}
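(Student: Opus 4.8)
The plan is to prove the two inequalities between $d_{\text{GHP}}^c(\cx,\cy)$ and the infimum over metrics on $X\sqcup Y$. One direction is immediate: given any metric $d$ on $X\sqcup Y$ making the canonical embeddings isometric, we may take $Z=X\sqcup Y$ with $d^Z=d$ and $\Phi,\Phi'$ the canonical embeddings; this is an admissible choice in the infimum defining $d_{\text{GHP}}^c$, so $d_{\text{GHP}}^c(\cx,\cy)$ is bounded above by each such quantity, hence by the infimum. Thus the content is the reverse inequality: every admissible triple $(\Phi,\Phi',Z)$ in the definition of $d_{\text{GHP}}^c$ can be ``pulled back'' to a metric on $X\sqcup Y$ without increasing the sum of the three terms.

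For the reverse direction I would fix isometric embeddings $\Phi:X\hookrightarrow Z$ and $\Phi':Y\hookrightarrow Z$ into a common Polish space $(Z,d^Z)$. The naive idea is to define a metric on $X\sqcup Y$ by $d(x,y)=d^Z(\Phi(x),\Phi'(y))$ for $x\in X$, $y\in Y$, and by $d^X$, $d^Y$ on each piece. This is well-defined and satisfies the triangle inequality (it is a pullback of $d^Z$ along the map that is $\Phi$ on $X$ and $\Phi'$ on $Y$), and the embeddings $X\hookrightarrow X\sqcup Y$, $Y\hookrightarrow X\sqcup Y$ are isometries by construction. However, this $d$ may fail to be a genuine metric: if $\Phi(x)=\Phi'(y)$ for some $x\in X$, $y\in Y$, then $d(x,y)=0$ while $x\neq y$ in the disjoint union. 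The standard fix is to perturb: for $\eta>0$ define $d_\eta$ by adding $\eta$ to all cross-distances, i.e. $d_\eta(x,y)=d^Z(\Phi(x),\Phi'(y))+\eta$ for $x\in X$, $y\in Y$, keeping $d^X,d^Y$ on the diagonal blocks. One checks $d_\eta$ is a true metric (positivity is now clear; the triangle inequality still holds, as the added constants only help, being checked block by block).

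The remaining work is to show that the three quantities computed with $d_\eta$ converge, as $\eta\to 0$, to the three quantities computed in $Z$, so that $\inf_d\{\cdots\}\le d^Z(\Phi(\emptyset^X),\Phi'(\emptyset^Y))+d_\text{H}^Z(\Phi(X),\Phi'(Y))+d_\text{P}^Z(\Phi_*\mu^X,\Phi'_*\mu^Y)+O(\eta)$, and then take the infimum over $(\Phi,\Phi',Z)$. Concretely: $d_\eta(\emptyset^X,\emptyset^Y)=d^Z(\Phi(\emptyset^X),\Phi'(\emptyset^Y))+\eta$; the Hausdorff distance $d_\text{H}^{d_\eta}(X,Y)$ differs from $d_\text{H}^Z(\Phi(X),\Phi'(Y))$ by at most $\eta$, since the $\epsilon$-halo of $X$ in $(X\sqcup Y, d_\eta)$ intersected with $Y$ corresponds to the $(\epsilon-\eta)$-halo of $\Phi(X)$ in $Z$ intersected with $\Phi'(Y)$; and similarly the Prokhorov distance $d_\text{P}^{d_\eta}(\mu^X,\mu^Y)$ differs from $d_\text{P}^Z(\Phi_*\mu^X,\Phi'_*\mu^Y)$ by at most $\eta$, by the same halo-shift argument applied to closed sets (one should be slightly careful here that closed sets in $X\sqcup Y$ decompose as disjoint unions of a closed subset of $X$ and a closed subset of $Y$, and that testing the Prokhorov inequality on $\Phi,\Phi'$ images is equivalent). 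Letting $\eta\to0$ and taking the infimum over all embeddings gives the desired inequality, which combined with the first direction proves the equality. The main obstacle is purely bookkeeping: verifying carefully that the $\eta$-perturbation disturbs each of the Hausdorff and Prokhorov terms by at most $\eta$ (rather than some larger multiple), and handling the fact that the relevant sets and measures on $X\sqcup Y$ do genuinely correspond to their images in $Z$ under the pullback; there is no serious conceptual difficulty.
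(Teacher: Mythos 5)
Your proposal is correct and follows essentially the same route as the paper: the easy direction by taking $Z=X\sqcup Y$, and the reverse direction by pulling back $d^Z$ to $X\sqcup Y$ with a $+\eta$ perturbation on cross-distances (the paper uses $\delta$) and checking that the root, Hausdorff and Prokhorov terms each move by at most $\eta$ before letting $\eta\to0$. The bookkeeping you flag is exactly what the paper carries out, so there is nothing further to add.
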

\begin{proof}
We only have to show that: 
\begin{equation} \label{eq:inf}
\inf_{d} \left\{
d(\emptyset^X,\emptyset^Y) + 
d_\text{H}^d(X,Y) + d_\text{P}^d ( \mu^X,\mu^Y) \right\} \le
d_{\text{GHP}}^c(\cx,\cy), 
\end{equation}
since the other inequality is obvious. Let $(Z,d^Z)$ be a Polish
space and
$\Phi^X$ and $\Phi^Y$ be isometric embeddings of $X$ and $Y$ in $Z$.
Let $\delta>0$. We define the following function on $(X\sqcup Y)^2$:
\begin{equation} d(x,y) = 
\begin{cases}
 d^Z(\Phi^X(x),\Phi^Y(y)) + \delta & \text{if}\ x\in X,\ y\in Y, \\
 d^X(x,y) & \text{if}\ x,y\in X, \\
 d^Y(x,y) & \text{if}\ x,y\in Y.
\end{cases}
\end{equation}
It is obvious that $d$ is a metric on $X\sqcup Y$, and that the
canonical embeddings of $X$ and $Y$ in $X\sqcup Y $ are isometric.
Furthermore, by definition, we have $d(\emptyset^X,\emptyset^Y)=
d^Z(\Phi^X(\emptyset^X) ,\Phi^Y(\emptyset^Y)) +\delta$. Concerning the
Hausdorff distance between $X$ and $Y$, we get that:
\[
d_\text{H}^d(X,Y)\leq d_\text{H}^Z ( \Phi^X( X), \Phi^Y( Y))+\delta.
\]

Finally, let us compute the Prokhorov distance between
$\mu^X$ and $\mu^Y$. Let $\epsilon>0$ be such that $d_\text{P}^{Z}
(\Phi^X_*\mu^X,\Phi^Y_*\mu^Y) < \epsilon$. Let $A$ be a closed
subset of $X\sqcup Y$. By definition, we have:
\begin{align*}
 \mu^X(A) = \mu^X(A\cap X) 
& = \Phi^X_*\mu^X(\Phi^X(A\cap X)) \\
 & < \Phi^Y_* \mu^Y(\{z\in Z,\ d^Z(z,\Phi^X(A\cap X)) < \epsilon \}) +
 \epsilon\\ 
 & = \Phi^Y_*\mu^Y(\{z\in \Phi^Y( Y),\ d^Z(z,\Phi^X(A\cap X)) < \epsilon \}) +
\epsilon
\\
 & \le \mu^Y (\{ y\in Y,\ d(y,A \cap X) < \epsilon
+\delta \}) +\epsilon\\
& \leq \mu^Y (\{ y\in X\sqcup Y,\ d(y,A) < \epsilon
+\delta \}) +\epsilon. 
\end{align*}
The symmetric result holds for $(X,Y)$ replaced by $(Y,X)$ and therefore
we get $ d_\text{P}^d(\mu^X,\mu^Y) < \epsilon + \delta$. This implies:
\[
d_\text{P}^d(\mu^X,\mu^Y)\leq d_\text{H}^Z ( \Phi^X_*\mu^ X,
\Phi^Y_*\mu^Y)+\delta.
\]
Eventually, we get:
\begin{multline*}
d(\emptyset^X,\emptyset^Y)+ d_\text{H}^d(X,Y)+d_\text{P}^d(\mu^X,\mu^Y)\\
\leq d^Z(\Phi^X(\emptyset^X) ,\Phi^Y(\emptyset^Y))+ d_\text{H}^Z ( \Phi^X( X),
\Phi^Y( Y))+ d_\text{H}^Z ( \Phi^X_*\mu^ X, \Phi^Y_*\mu^Y)+3\delta.
\end{multline*}
Thanks to \reff{f:def} and since $\delta>0$ is arbitrary, we get \reff{eq:inf}.
\end{proof}

We now prove that $d_{\text{GHP}}^c$ does indeed satisfy all the axioms of a
metric (as is done in \cite{Burago2001} for the Gromov-Hausdorff metric
and in \cite{Miermont2007} in the case of probability measures on
compact metric spaces). The symmetry and positiveness of $d_{\text{GHP}}^c$
being obvious, let us prove the triangular inequality and positive
definiteness.

\begin{lemma}
 The function $d_{\text{GHP}}^c$ satisfies the triangular identity on $\K$. 
\end{lemma}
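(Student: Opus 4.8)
The plan is to prove the triangular inequality for $d^c_{\text{GHP}}$ by a ``gluing'' argument: given three compact rooted weighted metric spaces $\cx$, $\cy$, $\cz$, we want to show $d^c_{\text{GHP}}(\cx,\cz)\le d^c_{\text{GHP}}(\cx,\cy)+d^c_{\text{GHP}}(\cy,\cz)$. The natural approach is to use Lemma~\ref{df:uniondisjointe} to replace, up to arbitrarily small error $\delta>0$, the abstract infima over embeddings by concrete metrics on disjoint unions. So first I would fix $\delta>0$ and choose a metric $d_1$ on $X\sqcup Y$, realizing the canonical copies of $X$ and $Y$ as isometric subspaces, such that
\[
d_1(\emptyset^X,\emptyset^Y)+d_\text{H}^{d_1}(X,Y)+d_\text{P}^{d_1}(\mu^X,\mu^Y)\le d^c_{\text{GHP}}(\cx,\cy)+\delta,
\]
and similarly a metric $d_2$ on $Y\sqcup Z$ with the analogous bound for $d^c_{\text{GHP}}(\cy,\cz)$.

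Next I would glue $d_1$ and $d_2$ along their common factor $Y$. Define a function $d$ on $X\sqcup Z$ by
\[
d(x,z)=\inf_{y\in Y}\bigl(d_1(x,y)+d_2(y,z)\bigr)\qquad\text{for }x\in X,\ z\in Z,
\]
and $d=d^X$ on $X\times X$, $d=d^Z$ on $Z\times Z$. One must check that $d$ is a metric (the triangle inequality follows from those of $d_1$ and $d_2$ together with compactness of $Y$, which guarantees the infimum is attained and is strictly positive when $x\ne z$; here I might first glue on $X\sqcup Y\sqcup Z$ with the metric $d_1\cup d_2$ defined by the same infimum formula between $X$ and $Z$, then restrict). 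Then $d$ makes $X$ and $Z$ isometric subspaces of $X\sqcup Z$. The root term is immediate: $d(\emptyset^X,\emptyset^Z)\le d_1(\emptyset^X,\emptyset^Y)+d_2(\emptyset^Y,\emptyset^Z)$.

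For the Hausdorff and Prokhorov terms I would argue that they are ``almost subadditive'' under gluing: on $X\sqcup Y\sqcup Z$ the canonical inclusions are isometric, and one has $d_\text{H}(X,Z)\le d_\text{H}(X,Y)+d_\text{H}(Y,Z)$ and $d_\text{P}(\mu^X,\mu^Z)\le d_\text{P}(\mu^X,\mu^Y)+d_\text{P}(\mu^Y,\mu^Z)$, where all Hausdorff and Prokhorov distances are computed in the glued metric (and the glued metric restricts to $d_1$ on $X\sqcup Y$ and to $d_2$ on $Y\sqcup Z$, so those two are unchanged). The Hausdorff triangle inequality is standard. For Prokhorov, I would verify directly from the definition in the excerpt: if $d_\text{P}(\mu^X,\mu^Y)<\alpha$ and $d_\text{P}(\mu^Y,\mu^Z)<\beta$, then for any closed $A$,
\[
\mu^X(A)\le\mu^Y(A^\alpha)+\alpha\le\mu^Z\bigl((A^\alpha)^\beta\bigr)+\alpha+\beta\le\mu^Z(A^{\alpha+\beta})+\alpha+\beta,
\]
using $(A^\alpha)^\beta\subset A^{\alpha+\beta}$, and symmetrically, so $d_\text{P}(\mu^X,\mu^Z)\le\alpha+\beta$. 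Combining the three estimates and invoking Lemma~\ref{df:uniondisjointe} once more (the infimum over metrics on $X\sqcup Z$ is $\le d(\emptyset^X,\emptyset^Z)+d_\text{H}(X,Z)+d_\text{P}(\mu^X,\mu^Z)$) yields
\[
d^c_{\text{GHP}}(\cx,\cz)\le d^c_{\text{GHP}}(\cx,\cy)+d^c_{\text{GHP}}(\cy,\cz)+2\delta,
\]
and letting $\delta\downarrow0$ finishes the proof.

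The main obstacle I anticipate is purely the gluing bookkeeping: checking that the infimum-formula $d(x,z)=\inf_{y\in Y}(d_1(x,y)+d_2(y,z))$ genuinely defines a metric (positivity when $x\ne z$ requires using that $Y$ is compact, hence closed and at positive distance in the relevant pieces — or one adds a cushioning $\delta$ as in the proof of Lemma~\ref{df:uniondisjointe} to sidestep the degeneracy), and that it restricts correctly to $d_1$ and $d_2$ so that the Hausdorff and Prokhorov contributions coming from the $\cx$--$\cy$ and $\cy$--$\cz$ pieces are exactly the ones we bounded. None of the individual steps is deep, but care is needed so that no error term is double-counted and the final constant is the clean $2\delta$ that vanishes in the limit.
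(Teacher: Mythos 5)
Your proposal is correct and follows essentially the same route as the paper's proof: both invoke Lemma~\ref{df:uniondisjointe} to reduce to metrics on disjoint unions, glue the two metrics through the middle space via the formula $\inf_{y}\bigl(d_1(x,y)+d_2(y,z)\bigr)$, and then verify subadditivity of the root, Hausdorff and Prokhorov terms separately (the Prokhorov computation with the halo sets $A^{\alpha}$, $A^{\alpha+\beta}$ is the same in both). The only cosmetic difference is that the paper works with strict bounds $d^c_{\text{GHP}}(\cx_i,\cx_2)<r_i$ rather than an additive $\delta$, and it does not spell out why the glued function is a genuine metric, a point you rightly flag as needing compactness of the middle space.
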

\begin{proof}
 Let $\cx_1,\cx_2$ and $\cx_3$ be elements of $\K$. For $i\in \{1,3\}$,
 let us assume that $d_{\text{GHP}}^c(\cx_i,\cx_2)<r_i$. With obvious
 notations, for $i\in \{1,3\}$, we consider, as in Lemma
 \ref{df:uniondisjointe}, metrics $d_i$ on $X_i\sqcup X_2$. Let us then
consider $Z=X_1\sqcup X_2\sqcup X_3$, on which we define:
\begin{equation}
 d(x,y) = 
\begin{cases}
 d_{i}(x,y) & \text{if}\ x,y\in (X_i\sqcup X_2)^2 \text{ for } i\in \{1,3\}, \\
 \inf_{z\in X_2} \{ d_{1}(x,z) + d_{3}(z,y) \} & \text{if}\ x\in X_1,y\in
X_3.
\end{cases}
\end{equation}
The function $d$ is in fact a metric on $Z$, and
the canonical embeddings are isometries, since they are for $d_{1}$ and
$d_{3}$. By definition, we have:
\[
 d_\text{H}^{d}(X_1,X_3) = \left( \sup_{x_1\in X_1} \inf_{x_3\in X_3}
d(x_1,x_3) \right) \vee \left( \sup_{x_3\in X_3} \inf_{x_1\in X_1}
d(x_1,x_3) \right) .
\]
We notice that:
\begin{align*}
\sup_{x_1\in X_1} \inf_{x_3\in X_3}
d(x_1,x_3) 
&= \sup_{x_1\in X_1} \inf_{x_2\in X_2,\ x_3\in X_3} d_{1}(x_1,x_2)
+ d_{3}(x_2,x_3)\\
 & \le d_\text{H}^{d_1 } (X_1,X_2) + \inf_{x_2\in X_2,\ x_3\in X_3}
d_{3}(x_2,x_3) \\
 & \le d_\text{H}^{d_1}(X_1,X_2) + d_\text{H}^{d_3}(X_2,X_3). 
\end{align*}
Thus, we deduce that $d_\text{H}^{d}(X_1,X_3) \leq d_\text{H}^{d_1}(X_1,X_2) +
d_\text{H}^{d_3}(X_2,X_3)$. 

As far as the Prokhorov distance is concerned, for $i\in \{1,3\}$, let
$\epsilon_i$ be such that $d_\text{P}^{d_i} (\mu_i,
\mu_2)<\varepsilon_i$. Then, if $A\subset Z$ is closed, we have:
\begin{align*}
 \mu_1(A) = \mu_1(A\cap X_1) 
& < \mu_2(\{ x\in X_1\sqcup X_2,\ d_1(x,A\cap X_1) <\epsilon_1 \}) +\epsilon_1
\\ 
 & \le \mu_2(A^{\epsilon_1} \cap X_2) + \epsilon_1 \\
 & < \mu_3(\{x\in X_3\sqcup X_2,\ d_3(x,A^{\epsilon_1} \cap
 X_2)<\varepsilon_3 \}) + \epsilon_1+\epsilon_3\\
 &\leq \mu_3(A^{\epsilon_1+\epsilon_3} ) +\epsilon_1+\epsilon_3,
\end{align*}
where $A^{\varepsilon}=\{z\in Z,\ d(z,A)<\varepsilon\}$, for
$\varepsilon=\varepsilon_1$ and $\varepsilon=\varepsilon_1+\varepsilon_3$. 
A similar result holds with $(\mu_1, \mu_3)$ replaced by $(\mu_3, \mu_1)$. We
deduce that $d_\text{P}^{d}(\mu_1,\mu_3) < \epsilon_1+\epsilon_3$, which
implies that $d_\text{P}^{d}(\mu_1,\mu_3) \le d_\text{P}^{d_{1}}(\mu_1,\mu_2)
+d_\text{P}^{d_{3}}(\mu_2,\mu_3)$. 

By summing up all the results, we get:
\[
d(\emptyset_1, \emptyset_3)+ d_\text{H}^d(X_1, X_3)+d_\text{P}^d(\mu_1, \mu_3)
\leq 
\sum_{i\in \{1, 3\}} d^{d_i} (\emptyset_i, \emptyset_2)+ d_\text{H}^{d_i} (X_i,
X_2)+d_\text{P}^{d_i}(\mu_i, \mu_2) .
\]
Then use the definition \reff{f:def} and Lemma \ref{df:uniondisjointe} to
get the triangular inequality:
\[
d_{\text{GHP}}^c(\cx_1, \cx_3)\leq d_{\text{GHP}}^c(\cx_1,
\cx_2)+d_{\text{GHP}}^c(\cx_2,
\cx_3).
\]
\end{proof}

This proves that $d_{\text{GHP}}^c$ is a semi-metric on $\K$. 
We then prove the positive
definiteness.

\begin{lemma}
\label{lem:dc=0}
 Let $\cx,\cy$ be two elements of $\K$ such that
 $d_{\text{GHP}}^c(\cx,\cy)=0$. Then $\cx=\cy$ (as GHP-isometry classes of
 rooted weighted compact metric spaces). 
\end{lemma}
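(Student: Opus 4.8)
The plan is to exploit the fact that, by Lemma~\ref{df:uniondisjointe}, $d_{\text{GHP}}^c(\cx,\cy)=0$ yields for each $n\ge 1$ a metric $d_n$ on $X\sqcup Y$, extending $d^X$ and $d^Y$, with $d_n(\emptyset^X,\emptyset^Y)+d_\text{H}^{d_n}(X,Y)+d_\text{P}^{d_n}(\mu^X,\mu^Y)<1/n$. The goal is to extract from these a genuine GHP-isometry $\Phi:X\to Y$. Since $X$ is compact, it admits for each $k$ a finite $(1/k)$-net; picking a countable dense set $\{x_i\}_{i\ge 1}$ in $X$, the sequence $(d_n)$ restricted to the countable set $\{x_i\}\times Y$ is, after a diagonal extraction, something we want to pass to the limit. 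Concretely, I would first use compactness of $Y$ together with $d_\text{H}^{d_n}(X,Y)<1/n$ to see that for each fixed $i$ there is a point $y_i^{(n)}\in Y$ with $d_n(x_i,y_i^{(n)})<1/n$; by compactness of $Y$ and a diagonal argument over $i$, pass to a subsequence along which $y_i^{(n)}\to \Phi(x_i)$ in $Y$ for every $i$. One checks $d^Y(\Phi(x_i),\Phi(x_j))=\lim_n d_n(y_i^{(n)},y_j^{(n)})=\lim_n d_n(x_i,x_j)=d^X(x_i,x_j)$, using the triangle inequality in $(X\sqcup Y,d_n)$ and $d_n(x_i,y_i^{(n)})<1/n$. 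Thus $\Phi$ is an isometry on the dense set $\{x_i\}$, hence extends uniquely to an isometric map $\Phi:X\to Y$; it sends $\emptyset^X$ to $\emptyset^Y$ because $d_n(\emptyset^X,\emptyset^Y)\to 0$ (taking $x_i=\emptyset^X$).

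Next I would show $\Phi$ is onto. Its image $\Phi(X)$ is compact, hence closed in $Y$; if it missed a point, there would be $y\in Y$ and $\rho>0$ with $d^Y(y,\Phi(X))>\rho$. But for $n$ large, $d_\text{H}^{d_n}(X,Y)<\rho/3$ forces some $x\in X$ with $d_n(x,y)<\rho/3$, and $d_n(x,\Phi(x))$ can be bounded by $d_n(x,y_i^{(n)})$-type estimates along a dense $x_i\to x$ together with $y_i^{(n)}\to\Phi(x_i)$; passing to the limit contradicts $d^Y(y,\Phi(X))>\rho$. (Equivalently, one runs the same net/diagonal argument symmetrically to build $\Psi:Y\to X$ isometric, and checks $\Psi\circ\Phi=\mathrm{id}_X$ on the dense set, so $\Phi$ is bijective.) Hence $\Phi$ is a root-preserving bijective isometry.

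Finally I would verify $\Phi_*\mu^X=\mu^Y$. For a closed set $A\subset Y$ and $\epsilon>0$, using $d_\text{P}^{d_n}(\mu^X,\mu^Y)<\epsilon$ for large $n$ we get $\mu^Y(A)\le \mu^X(A^{\epsilon})+\epsilon$, where the halo is taken in $(X\sqcup Y,d_n)$; since $d_n$ extends both metrics and $d_n(x,\Phi(x))\to 0$ uniformly (by compactness — the bound $d_n(x,\Phi(x))\le 2/n$-type estimate holds once one knows it on a net, and $\Phi$ is $1$-Lipschitz so uniform continuity propagates it), the set $A^{\epsilon}\cap X$ is contained in $\Phi^{-1}(A^{2\epsilon})$ for $n$ large. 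This gives $\mu^Y(A)\le \Phi_*\mu^X(A^{2\epsilon})+\epsilon$ and symmetrically, so $d_\text{P}^Y(\Phi_*\mu^X,\mu^Y)\le 3\epsilon$ for every $\epsilon>0$, whence $\Phi_*\mu^X=\mu^Y$. The main obstacle is the bookkeeping needed to transfer the Hausdorff- and Prokhorov-closeness in $(X\sqcup Y,d_n)$ — where the "distance" $d_n(x,\Phi(x))$ between a point and its image is not literally zero — into honest statements in $Y$; this is handled by upgrading the pointwise convergence $y_i^{(n)}\to\Phi(x_i)$ to a uniform estimate $\sup_{x\in X} d_n(x,\Phi(x))\to 0$, which holds because $X$ is compact, $\Phi$ is an isometry, and the $d_n$ all restrict to $d^X$ on $X$ and to $d^Y$ on $Y$.
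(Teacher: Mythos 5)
Your proposal is correct and its first half is essentially the paper's argument: both extract, via compactness of $Y$ and a diagonal argument, limit points $\Phi(x_i)$ of near-correspondents $y_i^{(n)}$ of a countable dense family in $X$ (the paper uses a union of finite $(1/k)$-nets, which is the same thing), check that the limit map is a distance-preserving, root-preserving map on a dense set, and extend it to a surjective isometry $X\to Y$ using that the image of each net is a net of $Y$. Where you diverge is the identification $\Phi_*\mu^X=\mu^Y$: the paper discretizes $\mu^X$ into atomic measures $\mu^X_k$ supported on the nets, pushes them through the correspondence to get $\mu^{Y,n}_k$ and $\mu^Y_k=\Phi_*\mu^X_k$, shows $d_\text{P}^Y(\mu^Y_k,\mu^Y)\le 1/k$, and concludes by weak convergence and continuity of $\Phi$; you instead upgrade the pointwise convergence $y_i^{(n)}\to\Phi(x_i)$ to the uniform bound $\sup_{x\in X}d_n(x,\Phi(x))\to 0$ (correctly justified through a finite net plus the fact that $d_n$ restricts to $d^X$ and $d^Y$) and then transfer the Prokhorov inequality for $d_n$ directly into $\mu^Y(A)\le\Phi_*\mu^X(A^{\delta_n})+\delta_n$ and its symmetric counterpart. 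Your route avoids introducing the auxiliary discretized measures at the cost of needing the uniform estimate, while the paper's only needs pointwise convergence of finitely many atoms at each stage; both are complete and yield the same conclusion.
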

\begin{proof}
 Let $\cx=(X, d^X,\emptyset^X, \mu^X)$ and $\cy=(Y, d^Y,\emptyset^Y,
 \mu^Y)$ in $\K $ such that $d_{\text{GHP}}^c(\cx,\cy)=0$. According to Lemma
 \ref{df:uniondisjointe}, we
 can find a sequence of metrics $(d^n,n\ge 1)$ on $X\sqcup Y$, such
 that
\begin{equation}
 \label{dfpetit} 
d^n(\emptyset^X,\emptyset^Y) + d_\text{H}^n(X,Y) + d_\text{P}^n(\mu^X,\mu^Y) <
\epsilon_n,
\end{equation}
for some positive sequence $(\epsilon_n,n\ge 1)$ decreasing to 0, where
$d^n_\text{H}$ and $d^n_\text{P}$ stand for $d^{d^n}_\text{H}$ and
$d^{d^n}_\text{P}$ . For any $k\ge
1$, let $S_k$ be a finite $(1/k)$-net of $X$, containing the root. Since
$X$ is compact, we get by Definition \ref{defi:e-net}
that $S_k$ is in fact an $(\inv{k}-\delta)$-net of $X$ for
some $\delta>0$. 
Let
$N_k+1$ be the cardinal of $S_k$. We will write:
\[ 
S_k = \{ x_{0,k}=\emptyset^X,x_{1,k},...,x_{N_k,k} \}. 
\]
Let $(V_{i,k}, 0\le i\le N_k)$ be Borel subsets of $X$ with diameter
less than $1/k$, that is: 
\[
\sup_{x,x'\in V_{i,k}} d^X(x,x')<1/k,
\]
such that $\bigcup _{0\le i\le N_k} V_{i,k}=X$ and 
for all $0\le i,i'\le N_k$, we have $V_{i,k}\bigcap
V_{i',k}=\emptyset$ and $x_{i,k}\in
V_{i,k}$ if $V_{i,k}\neq \emptyset$. We set: 
\[
\mu^X_k(dx)=\sum_{i=0}^{N_k} \mu^X(V_{i,k}) \delta_{x_{i,k}}(dx),
\]
where $\delta_{x'}(dx)$ is the Dirac measure at $x'$. Notice that:
\[
d_\text{H}^X(X,S_k) \le \inv{k} 
\quad\text{and}\quad
 d_\text{P}^X(\mu_k^X,\mu^X) \le \inv{k}\cdot
\]
We set $y_{0,k}=y_{0,k}^n=\emptyset^Y$. By \reff{dfpetit}, we get that
for any $k\ge 1,0\le i \le N_k$, there exists $y_{i,k}^n\in Y$ such that
$d^n(x_{i,k},y^n_{i,k}) < \epsilon_n$. Since $Y$ is compact, the
sequence $(y_{i,k}^n,n\ge 1)$ is relatively compact, hence admits a
converging sub-sequence. Using a diagonal argument, and without loss of
generality (by considering the sequence instead of the sub-sequence), we
may assume that for $k\ge 1,0\le i\le N_k$, the sequence
$(y_{i,k}^n,n\ge 1)$ converges to some $y_{i,k}\in Y$.

For any $y\in Y$, we can choose $x\in X$ such that $d^n(x,y)< \epsilon_n$ and
${i,k}$ such
that $d^X(x,x_{i,k})< \inv{k} -\delta$. Then, we get:
\[
 d^Y(y,y_{i,k}^n) = d^n(y,y_{i,k}^n)\le d^n(y,x)+d^X(x,x_{i,k})+ d^n(
 x_{i,k}, y_{i,k}^n)\leq \inv{k}-\delta +2\epsilon_n.
 \]
 Thus, the set $\{ y_{i,k}^n,0\le i \le N_k \}$ is a
 $(2\epsilon_n+1/k-\delta)$-net of $Y$, and the set $S_k^Y = \{ y_{i,k},
 0\le i \le N_k \}$ is an $1/k$-net of $Y$.

If $k,k'\ge 1$
and $0\le i\le N_k,0\le i'\le N_{k'}$, then we have:
\begin{align*} 
d^Y(y_{i,k},y_{i',k'}) 
& \le d^Y(y_{i,k}^n,y_{i,k}) +
d^Y(y_{i,k}^n,y_{i',k'}^n) + d^Y(y_{i',k'}^n,y_{i',k'}) \\
&\le d^Y(y_{i,k}^n,y_{i,k}) + d^Y(y_{i',k'}^n,y_{i',k'}) +2\epsilon_n +
d^X(x_{i,k},x_{i',k'}),
\end{align*}
and, since the terms $d(y_{i,k}^n,y_{i,k})$ and $d(y_{i',k'}^n,y_{i',k'})$ can
be made arbitrarily small, we deduce:
\[
 d(y_{i,k},y_{i',k'}) \le d(x_{i,k},x_{i',k'}).
 \]
The reverse inequality is proven using similar arguments, so that the
above inequality is in fact an equality. Therefore the map defined by $
\Phi(x_{i,k})=(y_{i,k})$ from $\cup_{k \geq 1} S_k$ onto $\cup_{k\geq
 1} S_k^Y$ is a root-preserving 
isometry. By density, this map can be extended uniquely to an isometric
one-to-one root preserving embedding from $X$ to $Y$ which we still
denote by $\Phi$. Hence
the metric spaces $X$ and $Y$ are root-preserving isometric. \\

As far as the measures are concerned, we set:
\begin{equation*}
 \mu_k^{Y,n}=\sum_{i=0}^{N_k} \mu^X(V_{i,k}) \delta_{y_{i,k}^n} 
\quad\text{and}\quad
 \mu_k^Y = \sum_{i=0}^{N_k} \mu^X(V_{i,k}) \delta_{y_{i,k}}.
\end{equation*}
By construction, we have $d_\text{P}^n(\mu_k^{Y,n},\mu_k^X) \le \epsilon_n$. We
get:
\begin{align*}
 d_\text{P}^Y(\mu_k^Y,\mu^Y) = d_\text{P}^n(\mu_k^Y,\mu^Y) 
& \le d_\text{P}^Y(\mu_k^Y,\mu_k^{Y,n})+d_\text{P}^n(\mu_k^{Y,n},\mu_k^X) +
d_\text{P}^X(\mu_k^X,\mu^X)+ d_\text{P}^n(\mu^X,\mu^Y) 
 \\
& < d_\text{P}^Y(\mu_k^Y,\mu_k^{Y,n})+\varepsilon_n+
\inv{k} + \epsilon_n.
\end{align*}
Furthermore, as $n$ goes to infinity, we have that
$d_\text{P}^Y(\mu_k^Y,\mu_k^{Y,n})$ converges to 0,
since the $y^n_{i,k}$ converge towards the $y_{i,k}$. Thus, we actually
have:
\[ 
d_\text{P}^Y(\mu_k^Y,\mu^Y) \le 1/k. 
\]
This implies that $(\mu^Y_k, k\geq 1)$ converges weakly to
$\mu^Y$. Since by definition $\mu_k^Y=\Phi_*\mu_k^X$ and since $\Phi$ is
continuous, by passing to the limit, we get $\mu^Y = \Phi_*\mu^X$. This
gives that $\cx$ and $\cy$ are GHP-isometric.
\end{proof}

This proves that the function $d_{\text{GHP}}^c$ defines a metric on
$\K$. 

\subsection{Proof of Theorem \ref{GHP:PreComK} and of (ii) of 
Theorem \ref{theo:dcGHP}} 
The proof of Theorem \ref{GHP:PreComK} is very close to the proof of
Theorem 7.4.15 in \cite{Burago2001}, where only the Gromov-Hausdorff
metric is involved. It is in fact a simplified version of the proof of
Theorem \ref{GHP:PreComL}, and is thus left to the reader.\\

We are left with the proof of (ii) of Theorem \ref{theo:dcGHP}. It is in
fact enough to check that if $(\cx_n, n\in \N)$ is a Cauchy sequence,
then it is relatively compact.

First notice that if $(Z,d^Z)$ is a Polish metric space, then for any
closed subsets $A,B$, we have $d^Z_\text{P}(A,B)\geq
\left|\diam(A)-\diam(B)\right|$, and for any $\mu,\nu\in \cm_f(Z)$, we
have $d^Z_\text{H}(\mu,\nu)\geq \left|\mu(Z)-\nu(Z)\right|$. This implies
that for any $\cx=(X, d^X, \emptyset^X,\mu), \cy=(Y, d^Y, \emptyset^Y,
\nu) \in \K$: 
\begin{equation}
 \label{eq:diamP}
d^c_\text{GHP}(\cx,\cy)\geq \left|\diam(X)-\diam(Y)\right|+
\left|\mu(X)-\nu(Y)\right|.
\end{equation}
Furthermore, using the definition of the Gromov-Hausdorff metric
\reff{eq:d-GH}, we clearly have:
\begin{equation}
 \label{eq:GHP-GH}
d^c_\text{GHP}(\cx,\cy)\geq d_\text{GH}^c((X,d^X), (Y, d^Y)).
\end{equation}

We deduce that if $\ca=(\cx_n, n\in \N)$ is a Cauchy sequence, then
\reff{eq:diamP} implies that conditions (i) and (iii) of Theorem
\ref{GHP:PreComK} are fulfilled. Furthermore, thanks to
\reff{eq:GHP-GH}, the sequence $((X_n, d^{X_n}), n\in \N)$ is a Cauchy
sequence for the Gromov-Hausdorff metric. Then point (2) of
Proposition 7.4.11 in \cite{Burago2001} readily implies condition (ii)
of Theorem \ref{GHP:PreComK}.

\section{Extension to locally compact length spaces}
\label{sec:L}
\subsection{First results}

First, let us state two elementary Lemmas. Let $(X,d, \emptyset)$ be a
rooted metric space. Recall notation \reff{eq:X(r)}. We set:
\[
\partial_r X= \{x\in X; \ d(\emptyset^x,x)=r\}.
\]

\begin{lemma}
\label{lem:couronne}
Let $(X, d,\emptyset)$ be a complete rooted length space and
$r,\epsilon>0$. Then we have, for all $\delta>0$:
\[ X^{(r+\epsilon)} \subset (X^{(r)})^{\epsilon+\delta}. \]
\end{lemma}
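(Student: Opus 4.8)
The claim is that in a complete rooted length space $(X,d,\emptyset)$, every point of the closed ball $X^{(r+\epsilon)}$ lies within distance $\epsilon+\delta$ of the smaller closed ball $X^{(r)}$, for every $\delta>0$. The plan is to fix $x\in X^{(r+\epsilon)}$ and produce an explicit point $z\in X^{(r)}$ with $d(x,z)<\epsilon+\delta$. If $d(\emptyset,x)\le r$ there is nothing to do (take $z=x$), so assume $r<d(\emptyset,x)\le r+\epsilon$. The idea is to walk from $x$ back toward the root along an almost-geodesic: since $X$ is a length space, there is a rectifiable curve $\gamma$ from $\emptyset$ to $x$ with length $L(\gamma)<d(\emptyset,x)+\eta$ for any prescribed $\eta>0$, which I will choose small (say $\eta<\delta$). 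Parametrising $\gamma$ by arc length and using continuity of $t\mapsto d(\emptyset,\gamma(t))$ together with the intermediate value theorem, I can pick a point $z=\gamma(t_0)$ on this curve with $d(\emptyset,z)=r$ (here I use $d(\emptyset,\gamma(0))=0\le r<d(\emptyset,x)=d(\emptyset,\gamma(\text{end}))$, so such a $t_0$ exists). This $z$ clearly belongs to $X^{(r)}$.

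It then remains to estimate $d(x,z)$. The sub-arc of $\gamma$ from $z$ to $x$ has length at most $L(\gamma)$ minus the length of the initial sub-arc from $\emptyset$ to $z$; since length along the curve dominates the distance $d(\emptyset,z)=r$ travelled, the initial sub-arc has length at least $r$, so the terminal sub-arc has length at most $L(\gamma)-r < d(\emptyset,x)+\eta - r \le (r+\epsilon)+\eta - r = \epsilon+\eta < \epsilon+\delta$. Since $d(x,z)$ is bounded by the length of any curve joining them, in particular by the length of this terminal sub-arc, we conclude $d(x,z)<\epsilon+\delta$, hence $x\in (X^{(r)})^{\epsilon+\delta}$. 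As $x\in X^{(r+\epsilon)}$ was arbitrary, this proves the inclusion.

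The only mild subtlety — and the step to be careful about — is the existence of the almost-geodesic curve and the arc-length reparametrisation: one should justify that the length functional restricted to sub-arcs is additive and that $L(\gamma|_{[0,t_0]})\ge d(\emptyset,\gamma(t_0))$, which is immediate since the distance between two points is an infimum of lengths of connecting curves. One could also sidestep explicit curves entirely by invoking the mid-point characterisation of length spaces recalled in the excerpt: iterating the mid-point property builds a finite chain $\emptyset=w_0,w_1,\dots,w_m=x$ with consecutive distances roughly $d(\emptyset,x)/m$ and total ``length'' close to $d(\emptyset,x)$, then pick the last $w_j$ with $d(\emptyset,w_j)\le r$; completeness is what guarantees the limiting geodesic-like object exists, though for this lemma the finite chain already suffices once $m$ is large. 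Either route works; I would present the almost-geodesic version as the cleaner one. No genuine obstacle is expected here — this is the standard ``balls in length spaces grow continuously'' fact, included as a technical stepping stone.
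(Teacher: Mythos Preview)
Your proof is correct and follows essentially the same approach as the paper's: pick an almost-geodesic $\gamma$ from $\emptyset$ to $x$ with length less than $d(\emptyset,x)+\delta$, use continuity of $t\mapsto d(\emptyset,\gamma(t))$ to find a point $z$ on $\gamma$ at distance exactly $r$ from the root, and bound $d(x,z)$ by the length of the terminal sub-arc, which is at most $L(\gamma)-r<\epsilon+\delta$. The paper's version is slightly terser (it uses $\delta$ directly as the slack rather than introducing an auxiliary $\eta<\delta$), but the argument is the same.
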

\begin{proof}
 Let $x\in X^{(r+\epsilon)}\backslash X^{(r)}$ and $\delta>0$. There
 exists a rectifiable curve $\gamma$ defined on $[0,1]$ with values in
 $X$ such that $\gamma(0)=\emptyset$ and $\gamma(1)=x$, such that
 $L(\gamma) < d(\emptyset,x)+\delta\le r+\epsilon +\delta$. There
 exists $t\in (0,1)$ such that $\gamma(t)\in \partial_r X$. We can
 bound $d(\gamma(t),x)$ by the length of the fragment of $\gamma$
 joining $\gamma(t)$ and $x$, that is the length of $\gamma$ minus the
 length of the fragment of $\gamma$ joining $\emptyset$ to
 $\gamma(t)$. The latter being equal to or larger than $d(\emptyset^X,
 \gamma(t))=r$, we get:
\[
d(\gamma(t),x) \leq L(\gamma) - r< \varepsilon+\delta. 
\]
Since $\gamma(t)\in X^{(r)}$, we get $x\in
\left(X^{(r)}\right)^{\varepsilon+\delta}$. This ends the proof.
\end{proof}

\begin{lemma}\label{df:estimation}
 Let $\cx=(X,d,\emptyset,\mu) \in \LL$. For all $\varepsilon>0$ and
 $r>0$, we have:
\[
 d_{\text{GHP}}^c(\cx^{(r)},\cx^{(r+\epsilon)}) \le \epsilon +
\mu(X^{(r+\epsilon)}\setminus X^{(r)}). 
\] 
\end{lemma}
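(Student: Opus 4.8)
The plan is to build an explicit metric on the disjoint union $X^{(r)} \sqcup X^{(r+\epsilon)}$ and apply Lemma~\ref{df:uniondisjointe}. Observe that $X^{(r)} \subset X^{(r+\epsilon)}$ as sets (with the same metric, root, and with $\mu^{(r)}$ the restriction of $\mu^{(r+\epsilon)}$), so the natural choice is to glue the two copies along the common piece $X^{(r)}$. Concretely, on $X^{(r)} \sqcup X^{(r+\epsilon)}$ define $d(x,y)$ to be $d(x,y)$ if both points lie in the same copy, and for $x \in X^{(r)}$ (first copy) and $y \in X^{(r+\epsilon)}$ (second copy) set $d(x,y) = \delta \wedge \big(\inf_{z\in X^{(r)}} (d(x,z)+d(z,y)) + \delta\big)$ for a small parameter $\delta>0$ — i.e. identify the two copies of $X^{(r)}$ up to an error $\delta$. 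One checks this is a genuine metric for which both canonical embeddings are isometric and the two roots are at distance $\delta$.

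Next I would estimate the three quantities appearing in Lemma~\ref{df:uniondisjointe} for this $d$. The root term is $\delta$. For the Hausdorff term, every point of the first copy ($X^{(r)}$) is within $\delta$ of its twin in the second copy; conversely, by Lemma~\ref{lem:couronne}, every point of $X^{(r+\epsilon)}$ lies in $(X^{(r)})^{\epsilon+\delta'}$ for any $\delta'>0$, hence is within $\epsilon+\delta'+\delta$ of the first copy. So $d_\text{H}^d(X^{(r)},X^{(r+\epsilon)}) \le \epsilon + \delta + \delta'$. For the Prokhorov term, compare $\mu^{(r)}$ (on the first copy) with $\mu^{(r+\epsilon)}$ (on the second copy): these measures agree on $X^{(r)}$ up to the identification error $\delta$, and $\mu^{(r+\epsilon)}$ carries the extra mass $\mu(X^{(r+\epsilon)}\setminus X^{(r)})$ sitting on the annulus. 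Testing the Prokhorov inequalities on closed sets $A$ shows that any $\epsilon' > \delta + \mu(X^{(r+\epsilon)}\setminus X^{(r)})$ works, giving $d_\text{P}^d(\mu^{(r)},\mu^{(r+\epsilon)}) \le \delta + \mu(X^{(r+\epsilon)}\setminus X^{(r)})$.

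Summing the three bounds and invoking Lemma~\ref{df:uniondisjointe} yields
\[
d_{\text{GHP}}^c(\cx^{(r)},\cx^{(r+\epsilon)}) \le \epsilon + \mu(X^{(r+\epsilon)}\setminus X^{(r)}) + 3\delta + \delta',
\]
and letting $\delta,\delta' \downarrow 0$ gives the claimed inequality. The main obstacle I anticipate is the Prokhorov estimate: one must handle closed sets $A \subset X^{(r)} \sqcup X^{(r+\epsilon)}$ that straddle both copies, carefully tracking how the $\delta$-identification interacts with the $\epsilon$-halos, and correctly account for the annular mass $\mu(X^{(r+\epsilon)}\setminus X^{(r)})$ which is the only genuinely new contribution (the $\epsilon$ comes purely from the geometry via Lemma~\ref{lem:couronne}). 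Everything else is a routine verification that the glued function is a metric and that the embeddings are isometric.
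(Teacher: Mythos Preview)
Your overall plan is sound and would work, but it is substantially more complicated than the paper's argument, and there is a slip in your metric formula. As written, your cross-distance $d(x,y) = \delta \wedge \big(\inf_{z\in X^{(r)}} (d(x,z)+d(z,y)) + \delta\big)$ collapses to the constant $\delta$ (the parenthetical is always $\ge \delta$), which is not a metric. Presumably you meant simply $d(x,y) = d^X(x,y) + \delta$ for $x$ in the first copy and $y$ in the second (this is what your infimum equals anyway, taking $z=x$); with that correction the rest of your plan goes through.

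The paper avoids the disjoint-union construction and the auxiliary parameter $\delta$ entirely. Since $X^{(r)} \subset X^{(r+\epsilon)}$ with the same metric and root, one can take $Z = X^{(r+\epsilon)}$ in the definition \reff{f:def} of $d^c_{\text{GHP}}$, with $\Phi$ the inclusion and $\Phi'$ the identity. The root term is then exactly $0$, Lemma~\ref{lem:couronne} gives $d_\text{H}(X^{(r)},X^{(r+\epsilon)}) \le \epsilon$, and the Prokhorov bound $d_\text{P}(\mu^{(r)},\mu^{(r+\epsilon)}) \le \mu(X^{(r+\epsilon)}\setminus X^{(r)})$ follows immediately from $\mu^{(r)}(A) \le \mu^{(r+\epsilon)}(A)$ and $\mu^{(r+\epsilon)}(A) \le \mu^{(r)}(A) + \mu(X^{(r+\epsilon)}\setminus X^{(r)})$. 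No limiting argument in $\delta,\delta'$ is needed. Your route through Lemma~\ref{df:uniondisjointe} works but forces you to separate the two copies by a positive $\delta$ (to keep the disjoint-union metric positive on cross-pairs) and then send it to zero, which is exactly the overhead that the direct embedding into the larger ball eliminates.
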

\begin{proof}
 The identity map is an obvious embedding $X^{(r)}\hookrightarrow
 X^{(r+\epsilon)}$ which is root-preserving. Then, we have:
\[
 d_{\text{GHP}}^c(\cx^{(r)},\cx^{(r+\epsilon)}) \le
d_\text{H}(X^{(r)},X^{(r+\epsilon)}) +
d_\text{P}(\mu^{(r)},\mu^{(r+\epsilon)}). 
\]
Thanks to Lemma \ref{lem:couronne}, we have
$d_\text{H}(X^{(r)},X^{(r+\epsilon)})
\le \epsilon$. 

Let $A\subset X$ be closed. We have obviously $\mu^{(r)}(A)\le
\mu^{(r+\epsilon)}(A)$. On the other hand, we have:
\[ 
\mu^{(r+\epsilon)}(A) \le \mu^{(r)}(A) + \mu(A\cap
(X^{(r+\epsilon)}\setminus X^{(r)})) \le \mu^{(r)}(A) +
\mu(X^{(r+\epsilon)}\setminus X^{(r)}). 
\]
This proves that $d_\text{P}(\mu^{(r)},\mu^{(r+\epsilon)})\le
\mu(X^{(r+\epsilon)}\setminus X^{(r)})$, which ends the proof.
\end{proof}

It is then straightforward to prove Lemma \ref{lem:reg-d-GHP}. 

\begin{proof}[Proof of Lemma \ref{lem:reg-d-GHP}]
 Let $\cx=(X,d^X,\emptyset^X, \mu^X)$ and $\cy=(Y, d^Y, \emptyset^Y,
 \mu^Y)$ be two elements of $\LL$. Using the triangular inequality
 twice and Lemma \ref{df:estimation}, we get for $r>0$ and $\epsilon
 >0$:
\begin{align*}
 |d_{\text{GHP}}^c(\cx^{(r)},\cy^{(r)})-d_{\text{GHP}}^c(\cx^{(r+\epsilon)},\cy^
{(r+\epsilon)}
) | 
& \le d_{\text{GHP}}^c(\cx^{(r)},\cx^{(r+\epsilon)}) +
d_{\text{GHP}}^c(\cy^{(r)},\cy^{(r+\epsilon)}) \\
& \le 2\epsilon + \mu^X(X^{(r+\epsilon)}\setminus X^{(r)}) +
\mu^Y(Y^{(r+\epsilon)}\setminus Y^{(r)}).
\end{align*}
As $\epsilon$ goes down to 0, the expression above converges to 0, so that we
get
right-continuity of the function $r\mapsto
d_{\text{GHP}}^c(\cx^{(r)},\cy^{(r)})$.

We write $\cx^{(r-)}$ for the compact metric space $X^{(r)}$ rooted at
$\emptyset^X$ along with the induced metric and the restriction of $\mu$
to the \textit{open} ball $\{x\in X; \ d^X(\emptyset^X,
x)<r\}$. We define $\cy^{(r-)}$ similarly. Similar arguments as above
yield for $r>\varepsilon>0$:
\begin{multline*}
 |d_{\text{GHP}}^c(\cx^{(r-)},\cy^{(r-)})-d_{\text{GHP}}^c(\cx^{(r-\epsilon)},
\cy^{(r-\epsilon) } ) | \\
\begin{aligned}
 &\le d_{\text{GHP}}^c(\cx^{(r-)},\cx^{(r-\epsilon)}) +
d_{\text{GHP}}^c(\cy^{(r)},\cy^{(r-\epsilon)}) \\
& \le 2\epsilon + \mu^X(\{x \in X,\
r-\varepsilon<d^X(\emptyset^X,x)<r\})+
\mu^Y(\{y \in Y,\ r-\varepsilon<d^Y(\emptyset^Y,y)<r\}).
\end{aligned}
\end{multline*}
As $\epsilon$ goes down to 0, the expression above also converges to 0,
which shows the existence of left limits for the function $r\mapsto
d_{\text{GHP}}^c(\cx^{(r)},\cy^{(r)})$.
\end{proof}

The next result corresponds to (i) in Theorem \ref{theo:LL}. 

\begin{proposition}\label{prop:distance}
 The function $d_{\text{GHP}}$ is a metric on $\LL$. 
\end{proposition}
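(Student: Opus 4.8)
The plan is to verify the three metric axioms for $d_{\text{GHP}}$ on $\LL$ one at a time, leveraging the corresponding properties of $d^c_{\text{GHP}}$ on $\K$ established in part (i) of Theorem \ref{theo:dcGHP} together with the regularity Lemma \ref{lem:reg-d-GHP}. Symmetry is immediate since $d^c_{\text{GHP}}(\cx^{(r)},\cy^{(r)}) = d^c_{\text{GHP}}(\cy^{(r)},\cx^{(r)})$ for every $r$, so the integrand is symmetric. For the triangle inequality, I would fix $\cx,\cy,\cz\in\LL$ and apply the triangle inequality for $d^c_{\text{GHP}}$ pointwise in $r$: for each $r\ge 0$,
\[
d^c_{\text{GHP}}(\cx^{(r)},\cz^{(r)}) \le d^c_{\text{GHP}}(\cx^{(r)},\cy^{(r)}) + d^c_{\text{GHP}}(\cy^{(r)},\cz^{(r)}).
\]
Applying $1\wedge\cdot$, which is subadditive and nondecreasing, gives $1\wedge d^c_{\text{GHP}}(\cx^{(r)},\cz^{(r)}) \le \bigl(1\wedge d^c_{\text{GHP}}(\cx^{(r)},\cy^{(r)})\bigr) + \bigl(1\wedge d^c_{\text{GHP}}(\cy^{(r)},\cz^{(r)})\bigr)$; multiplying by $\expp{-r}$ and integrating over $r\in(0,\infty)$ yields $d_{\text{GHP}}(\cx,\cz)\le d_{\text{GHP}}(\cx,\cy)+d_{\text{GHP}}(\cy,\cz)$. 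All integrals here are finite and well-defined because Lemma \ref{lem:reg-d-GHP} guarantees the integrand is c\`adl\`ag, hence Borel measurable and bounded by $\expp{-r}$.

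The substantive point is positive definiteness: if $d_{\text{GHP}}(\cx,\cy)=0$ then $\cx=\cy$ in $\LL$. Since the integrand $r\mapsto \expp{-r}\bigl(1\wedge d^c_{\text{GHP}}(\cx^{(r)},\cy^{(r)})\bigr)$ is nonnegative and, by Lemma \ref{lem:reg-d-GHP}, right-continuous, its vanishing integral forces $d^c_{\text{GHP}}(\cx^{(r)},\cy^{(r)})=0$ for every $r\ge 0$ (a nonnegative right-continuous function with zero integral must be identically zero: if it were positive at some $r_0$ it would be bounded below on a right-neighborhood $[r_0,r_0+\eta)$, contradicting the vanishing of the integral). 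By part (i) of Theorem \ref{theo:dcGHP}, $d^c_{\text{GHP}}$ is a genuine metric on $\K$, so $\cx^{(r)}=\cy^{(r)}$ as GHP-isometry classes in $\K$ for every $r\ge 0$. It then remains to promote this family of compatible isometries of balls to a single root-preserving measure-preserving isometry $X\to Y$.

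I expect this last step to be the main obstacle. The natural approach is to pick, for each $r\in\N$, a GHP-isometry $\Phi_r:X^{(r)}\to Y^{(r)}$, but there is no reason distinct $\Phi_r$'s agree on smaller balls; the restriction of a GHP-isometry need not be the chosen GHP-isometry at the smaller radius. The fix is a compactness/diagonal argument: restrict each $\Phi_r$ to a fixed countable dense subset $D\subset X$ (first to $D\cap X^{(1)}$, then $D\cap X^{(2)}$, etc.), use that each $Y^{(r)}$ is compact (Hopf--Rinow, as recalled before Lemma \ref{lem:reg-d-GHP}) to extract along a subsequence of radii a pointwise limit $\Phi:D\to Y$, which is root-preserving and distance-preserving on $D$, hence extends to an isometric embedding $X\to Y$; surjectivity follows because $\Phi(X^{(r)})$ is a closed subset of $Y^{(r)}$ that is an $\epsilon$-net of $Y^{(r)}$ for every $\epsilon>0$ (arguing as in the proof of Lemma \ref{lem:dc=0}), hence equals $Y^{(r)}$; and $\Phi_*\mu^X=\mu^Y$ follows by checking $\Phi_*\mu^{X,(r)}=\mu^{Y,(r)}$ for each $r$ and letting $r\to\infty$, using vague convergence. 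This is essentially the argument of Lemma \ref{lem:dc=0} carried out ball-by-ball and glued together, so I would write it as a reduction to that lemma rather than repeating all details.
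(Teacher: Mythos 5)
Your proposal is correct and follows essentially the same route as the paper: symmetry and the triangle inequality via monotonicity and subadditivity of $x\mapsto 1\wedge x$, and positive definiteness by reducing to $d^c_{\text{GHP}}(\cx^{(r)},\cy^{(r)})=0$ on balls (the paper settles for almost every $r$ along a sequence $r_n\uparrow\infty$, you use right-continuity to get every $r$ -- either works) followed by the same diagonal compactness argument on countable dense sets/nets to glue the ball isometries into a single GHP-isometry, exactly in the spirit of Lemma \ref{lem:dc=0}.
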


\begin{proof}
 The symmetry and positivity of $d_{\text{GHP}}$ are obvious. The triangle
 inequality is not difficult either, since $d_{\text{GHP}}^c$ satisfies the
 triangle inequality and the map $x\mapsto 1\wedge x$ is non-decreasing
 and sub-additive.

 We need to check that $d_{\text{GHP}}$ is definite positive. To that
 effect, let $\cx=(X,d^X,\emptyset^X,\mu)$ and
 $\cy=(Y,d^Y,\emptyset^Y,\nu)$ be two elements of $\LL$ such that
 $d_{\text{GHP}}(\cx,\cy)=0$. We want to prove that $\cx$ and $\cy$
 are GHP-isometric. We follow the spirit of the proof of Lemma
 \ref{lem:dc=0}.

 By definition, we get that for almost every $r>0,\
 d_{\text{GHP}}^c(\cx^{(r)},\cy^{(r)})=0$. Let $(r_n,\ n\ge 1)$ be a
 sequence such that $r_n \uparrow \infty$ and such that for $n\ge 1,\
 d_{\text{GHP}}^c(\cx^{(r_n)},\cy^{(r_n)})=0$. Since
 $d_{\text{GHP}}^c$ is a metric on $\K$, there exists a GHP-isometry
 $\Phi^{n}: X^{(r_n)} \rightarrow Y^{(r_n)}$ for every $n\ge 1$. Since
 all the $X^{(r)}$ are compact, we may consider, for $n\ge 1$ and for
 $k\ge 1$, a finite $1/k$-net of $X^{(r_n)}$ containing the root:
\[
 S_k^{n} = \{ x_{0,k}^n=\emptyset^X, x^{n}_{1,k},...,x^{n}_{N^{n}_k,k} \}. 
\]
Then, if $k\ge 1$, $n\ge 1$, $0\le i \le N^{n}_k$, the sequence
$(\Phi^{j}(x^{n}_{i,k}),\ j\ge n)$ is bounded since the $\Phi^{j}$ are
isometries. Using a diagonal procedure, we may assume without loss of
generality, that for every $k\ge 1$, $n\ge 1$, $0\le i \le N^{n}_k$, the
sequence $(\Phi^{j}(x^{n}_{i,k}),\ j\ge n)$ converges to some
limit $y^{n}_{i,k}\in Y$. We define the map $\Phi$ on $S:=\bigcup_{n\ge 1,\
 k\ge 1} S^{n}_k $ taking values in $Y$ by:
\[ 
\Phi(x^{n}_{i,k}) = y^{n}_{i,k}.
\]
Notice that $\Phi$ is an isometry and root preserving as
$\Phi(\emptyset^X)=\emptyset^Y$ (see the proof of Lemma
\ref{lem:dc=0}). The set $\Phi(S^n_k)$ is obviously a $2/k$-net of
$Y^{(r_n)}$, and thus $\Phi(S)$ is a dense subset of $Y$. Therefore the
map $\Phi$ can be uniquely extended into a one-to-one root preserving
isometry from $X$ to $Y$, which we shall still denote by $\Phi$. It
remains to prove that $\Phi$ is a GHP-isometry, that is, such that
$\nu=\Phi_*\mu$.

For $n\ge 1$, $k\ge 1$, let $(V^{n}_{i,k},\ 0\le i\le N^{n}_k)$ be Borel
subsets of $X^{(r_n)}$ with diameter less than $1/k$, such that $\bigcup
_{0\le i\le N_k} V^n_{i,k}=X^{(r_n)}$ and for all $0\le i,i'\le N_k$, we
have $V_{i,k}^n\bigcap V_{i',k}^n=\emptyset$ and $x_{i,k}^n\in
V_{i,k}^n$ if $V_{i,k}^n\neq \emptyset$. We then define the following
measures:
\[
 \mu^{n}_k = \sum_{i=0}^{N^{n}_k} \mu(V^{n}_{i,k})
\delta_{x^{n}_{i,k}}
\quad\text{and}\quad
 \nu^{n}_{k} = \sum_{i=0}^{N^{n}_k} \mu(V^{n}_{i,k})
\delta_{y^{n}_{i,k}}.
\]
Let $A\subset X$ be closed. We obviously have $\mu_k^n(A)\leq
\mu^{(r_n)} (A^{1/k})$ and $\mu^{(r_n)} (A) \leq \mu^n_k(A^{1/k})$ that is:
\begin{equation}
 \label{eq:distPmu}
d_\text{P}^X( \mu^n_k,\mu^{(r_n)})\leq \inv{k}\cdot
\end{equation}

For any $n\ge 1$, $k\ge 1$, we have by construction $\nu^{n}_{k} =
\Phi_*\mu_k^{n}$ and $\nu^{(r_n)}=\Phi^{j}_*\mu^{(r_n)}$ for any $j\ge
n\ge 1$. We can then write, for $j\ge n$:
\begin{align*}
 d_\text{P}^{Y}(\nu^{n}_k,\nu^{(r_n)}) 
& = d_\text{P}^{Y}(\Phi_*\mu^{n}_k,\Phi^{j}_*\mu^{(r_n)} ) \\
& \le d_\text{P}^{Y}(\Phi_*\mu^{n}_k,\Phi^{j}_*\mu^{n}_k) +
d_\text{P}^{Y}(\Phi^{j}_* \mu^{n}_k, \Phi^{j}_* \mu^{(r_n)})\\
&\leq d_\text{P}^{Y}(\Phi_*\mu^{n}_k,\Phi^{j}_*\mu^{n}_k) +\inv{k},
\end{align*}
where for the last inequality we used $d_\text{P}^{Y}(\Phi^{j}_* \mu^{n}_k,
\Phi^{j}_* \mu^{(r_n)})= d_\text{P}^{X}( \mu^{n}_k,\mu^{(r_n)})$ and
\reff{eq:distPmu}. Since the two measures $\Phi_*\mu^{n}_k$ and
$\Phi^{j}_*\mu^{n}_k$ have the same masses distributed on a finite
number of atoms, and the atoms $\Phi^{j}(x^{n}_{i,k})$ of
$\Phi^{j}_*\mu^{n}_k$ converge towards the atoms $y^{n}_{i,k}$ of
 $\Phi_*\mu^{n}_k$, we deduce that:
\[
\lim_{j\rightarrow+\infty }
 d_\text{P}^{Y}(\Phi_*\mu^{n}_k,\Phi^{j}_*\mu^{n}_k) =0. 
\]
Hence, $(\nu^{n}_k, k\geq 1)$ converges weakly towards $\nu^{(r_n)}$.
According to \reff{eq:distPmu}, the sequence $(\mu_k^n, k\geq 1)$
converges weakly to $\mu^{(r_n)}$. Since we have
$\nu^{n}_k=\Phi_*\mu^{n}_k$ and $\Phi$ is continuous, we get
$ \nu^{(r_n)}=\Phi_*\mu^{(r_n)} $ for any $n\ge 1$, and thus
$\nu=\Phi_*\mu$. This ends the proof.
\end{proof}

We are now ready to prove Proposition \ref{prop:K-LL}. Notice that we
shall not use (ii) of Theorem \ref{theo:LL} in this Section as it is not
yet proved.

\begin{proof}[Proof of Proposition \ref{prop:K-LL}]
By construction, the convergence in $\K\cap \LL$ for the 
$d_{\text{GHP}}$ metric implies the convergence for the $d_{\text{GHP}}^c$
metric. We
only have to prove that the converse is also true.

Let $\cx=(X,d^X, \emptyset, \mu)$ and $\cx_n=(X_n, d^{X_n}, \emptyset_n,
\mu_n)$ be elements of $\K\cap \LL$ and $(\epsilon_n, n\in \N)$ be a
positive sequence converging towards 0 such that, for all $n\in \N$:
\[ 
 d_{\text{GHP}}^c(\cx_n,\cx) <\epsilon_n .
\]
Using Lemma \ref{df:uniondisjointe}, we consider a metric $d^n$ on the disjoint
union $X_n\sqcup X$, such that we have for $n\in \N$, and writing
$d^n_\text{H}$ and $d^n_\text{P}$ respectively for $d^{d^n}_\text{H}$ and
$d^{d^n}_\text{P}$:
\[ 
d^n(\emptyset_n,\emptyset)+ d_\text{H}^n(X_n,X) + d_\text{P}^n(\mu_n,\mu) <
\epsilon_n. 
\]
If $x_n\in X_n^{(r)}$, by definition of the Hausdorff metric, there exists
$x\in X$ such that $d^n(x_n,x)\leq d_\text{H}^n(X_n,X)$. Then, we have:
\[
 d^n(\emptyset,x) \le d^n(\emptyset, \emptyset_n) + d^n(\emptyset_n,x_n) +
d^n(x_n,x) 
 \le d^n(\emptyset_n,\emptyset) + r + d_\text{H}^n(X_n,X) 
 < r+\epsilon_n.
\]
We get that $x$ belongs to $X^{(r+\epsilon_n')}$ for some
$\varepsilon'_n<\varepsilon_n$ and thus, according to Lemma
\ref{lem:couronne}, it belongs to $(X^{(r)})^{\epsilon_n}$, since $X$ is
a complete length space. Therefore we have $X_n^{(r)}\subset
(X^{(r)})^{\varepsilon_n}$. Similar arguments yield $X^{(r)}\subset
(X_n^{(r)})^{\varepsilon_n}$. We deduce that:
\begin{equation}\label{dHaus}
d_\text{H}^n(X_n^{(r)},X^{(r)}) \le\epsilon_n. 
\end{equation}

If $A\subset X_n\sqcup X$ is closed, we may compute:
\begin{align*}
 \mu_n^{(r)}(A) = \mu_n(A\cap X_n^{(r)}) 
& \le \mu( A^{\epsilon_n} \cap
(X_n^{(r)})^{\epsilon_n}) +\epsilon_n \\
& \le \mu^{(r)}(A^{\epsilon_n}) + \mu((X_n^{(r)})^{\epsilon_n} \setminus
X^{(r)}) + \epsilon_n \\
& \le \mu^{(r)}(A^{\epsilon_n}) + \mu(X^{(r+2\epsilon_n)} \setminus X^{(r)})
+\epsilon_n, 
\end{align*}
since $(X_n^{(r)})^{\varepsilon_n}\subset
(X^{(r)})^{2\varepsilon_n} \subset X^{(r+2\varepsilon_n)}$. Similarly,
we also have: 
\begin{align*}
 \mu^{(r)}(A) & 
\le \mu(A\cap X^{(r-2\epsilon_n)}) + \mu(X^{(r)}\setminus
X^{(r-2\epsilon_n)}) \\
& \le \mu_n(A^{\epsilon_n} \cap (X^{(r-2\epsilon_n)})^{\epsilon_n} ) +
\mu(X^{(r)}\setminus X^{(r-2\epsilon_n)}) +\epsilon_n\\
& \le \mu_n^{(r)}(A^{\epsilon_n} ) +
\mu(X^{(r)}\setminus X^{(r-2\epsilon_n)}) +\epsilon_n,
\end{align*}
since $(X_n^{(r-2\epsilon_n)})^{\epsilon_n} \subset
 X^{(r)}$. Hence, we
finally deduce:
\[
d_\text{P}^n(\mu_n^{(r)},\mu^{(r)}) \le \epsilon_n +
\mu(X^{(r+2\epsilon_n)} \setminus X^{(r-2\epsilon_n)}). 
\]
This and \reff{dHaus} yield:
\[ 
d_{\text{GHP}}^c(\cx_n^{(r)},\cx^{(r)}) \le 3 d_{\text{GHP}}^c(\cx_n,\cx) + \mu(
X^{(r+2\epsilon_n)}
\setminus X^{(r-2\epsilon_n)}). 
\]
Therefore, if $\mu(\partial_r X) = 0$, we have
$\lim_{n\rightarrow+\infty } d_{\text{GHP}}^c(\cx_n^{(r)},\cx^{(r)})=0$. Since
$\mu$
is by definition a finite measure, the set $\{r>0,\ \mu(\partial_r X) \ne
0\}$ is at most countable. By
dominated convergence, we get $\lim_{n\rightarrow+\infty } 
d_{\text{GHP}}(\cx_n,\cx)=0$.
\end{proof}

In order to prove Theorem \ref{GHP:PreComL} on the pre-compactness
criterion, we shall approximate the elements 
of a sequence in $\cc$ by nets of small radius. The following Lemma guarantees
that we can construct such nets in a consistent way. We use the
convention that $X^{(r)}=\emptyset$ if $r<0$. In the sequel, if $r>0$ and $k\ge
0$, we will often use the notation $A_{r,k}(X)$ for the annulus
$X^{(r)}\setminus X^{(r-2^{-k})}$.

\begin{lemma}\label{KNetCouronne}
If $\cx=(X,\emptyset,d,\mu)\in \LL$ satisfies condition (i) of
Theorem \ref{GHP:PreComL}, then for any $k, \ell\in \N$, there
exists a $2^{-k}$-net of the annulus $A_{\ell2^{-k},k}(X) =
X^{(\ell 2^{-k})} \setminus X^{((\ell-1)2^{-k})}$ with at
most $N(\ell2^{-k},2^{-k-1})$ elements. 
\end{lemma}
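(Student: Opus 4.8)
The plan is to extract the net for the annulus from a net for the whole ball $X^{((\ell)2^{-k})}$, exploiting condition (i) of Theorem \ref{GHP:PreComL} at the scale $2^{-k-1}$ (which is why we ask for a net of cardinality at most $N(\ell 2^{-k}, 2^{-k-1})$ rather than $N(\ell 2^{-k}, 2^{-k})$). First I would apply condition (i) with $r=\ell 2^{-k}$ and $\varepsilon = 2^{-k-1}$ to get a finite $2^{-k-1}$-net $S$ of $X^{(\ell 2^{-k})}$ with $\Card S \le N(\ell 2^{-k}, 2^{-k-1})$. Then I would consider $S' = S \cap A_{\ell 2^{-k}, k}(X)$, the points of the net that already lie in the annulus, and check whether $S'$ is a $2^{-k}$-net of $A_{\ell 2^{-k},k}(X)$. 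By Definition \ref{defi:e-net} what has to be verified is that $A_{\ell 2^{-k},k}(X) \subset (S')^{2^{-k}}$, i.e.\ every point of the annulus is within distance $2^{-k}$ of some point of $S'$.

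The key step is the following. Given $x \in A_{\ell 2^{-k},k}(X) = X^{(\ell 2^{-k})} \setminus X^{((\ell-1)2^{-k})}$, since $S$ is a $2^{-k-1}$-net of the ball there is $s \in S$ with $d(x,s) < 2^{-k-1}$. Such an $s$ automatically satisfies $d(\emptyset, s) \le d(\emptyset,x) + d(x,s) \le \ell 2^{-k}$, so $s \in X^{(\ell 2^{-k})}$; and $d(\emptyset,s) \ge d(\emptyset,x) - d(x,s) > (\ell-1)2^{-k} - 2^{-k-1}$. If it happens that $d(\emptyset,s) > (\ell-1)2^{-k}$ then $s \in S'$ and we are done for this $x$. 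Otherwise $s$ lies in the thin shell $(\ell-1)2^{-k} - 2^{-k-1} < d(\emptyset,s) \le (\ell-1)2^{-k}$ and is not in $S'$; in that case I would use that $(X,d)$ is a length space together with Lemma \ref{lem:couronne} to produce a substitute point of $S'$ close to $x$. Concretely, one can pick a near-geodesic from $\emptyset$ to $x$; it crosses the sphere $\partial_{(\ell-1)2^{-k}}X$ at some point, and continuing along it one finds a point $x'$ on the curve with $(\ell-1)2^{-k} < d(\emptyset,x') $ and $d(x,x')$ as small as we like, in particular $d(x,x') < 2^{-k-1}$ (here we use $d(\emptyset,x) > (\ell-1)2^{-k}$, so the portion of the curve beyond the sphere is short up to an arbitrarily small slack). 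Then $x' \in A_{\ell 2^{-k},k}(X)$, so there is $s' \in S$ with $d(x',s') < 2^{-k-1}$, and $s'$ satisfies $d(\emptyset,s') \ge d(\emptyset,x') - d(x',s')$; by choosing the slack in the near-geodesic small enough relative to the gap $d(\emptyset,x) - (\ell-1)2^{-k}$ we arrange $d(\emptyset,s') > (\ell-1)2^{-k}$, hence $s' \in S'$, while $d(x,s') \le d(x,x') + d(x',s') < 2^{-k}$.

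There is one degenerate case to dispatch: a point $x$ of the annulus with $d(\emptyset,x) = (\ell-1)2^{-k}$ exactly, i.e.\ $x \in \partial_{(\ell-1)2^{-k}}X$ (if the annulus is taken to include its inner boundary sphere — otherwise this case is vacuous). For such $x$, by Lemma \ref{lem:couronne} applied with the roles reversed, or simply by the length-space property, one finds points of $X^{(\ell 2^{-k})}\setminus X^{((\ell-1)2^{-k})}$ with strictly larger radius arbitrarily close to $x$, and the same argument as above applies. In all cases we conclude $d(x, S') < 2^{-k}$, so $S'$ is a $2^{-k}$-net of the annulus with $\Card S' \le \Card S \le N(\ell 2^{-k}, 2^{-k-1})$, as required.

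I expect the main obstacle to be the bookkeeping of the three length scales involved ($2^{-k-1}$ for the ambient net, $2^{-k}$ for the target net, and the arbitrarily small slack $\delta$ coming from the near-geodesic and from Lemma \ref{lem:couronne}), making sure that the chain of triangle inequalities for the substitute point $s'$ really closes up with strict inequality $d(x,s') < 2^{-k}$ and that $s'$ genuinely lands in the annulus rather than just near it. None of this is deep, but it requires choosing the slack $\delta$ after, and in terms of, the fixed gap $d(\emptyset,x)-(\ell-1)2^{-k}$ for the boundary-type points; since for those points the gap can be zero, that is precisely where the length-space hypothesis (via Lemma \ref{lem:couronne}) does the real work, replacing $x$ by a nearby point strictly inside the annulus.
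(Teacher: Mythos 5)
Your opening move (take a $2^{-k-1}$-net $S$ of the ball $X^{(\ell 2^{-k})}$ of cardinal at most $N(\ell 2^{-k},2^{-k-1})$ and work inside it) matches the paper, but the way you treat annulus points whose nearest net point falls below the inner sphere has a genuine gap. The failing step is the claim that, after sliding $x$ to a nearby $x'$ on a near-geodesic with $d(\emptyset,x')>(\ell-1)2^{-k}$, the net point $s'$ with $d(x',s')<2^{-k-1}$ can be forced into the annulus ``by choosing the slack small enough.'' The slack of the near-geodesic is irrelevant: for any $x'$ close to $x$ you only know $d(\emptyset,s')\ge d(\emptyset,x')-2^{-k-1}$, and $d(\emptyset,x')$ cannot exceed (roughly) $d(\emptyset,x)$, so the bound gives nothing whenever $d(\emptyset,x)\le(\ell-1)2^{-k}+2^{-k-1}$. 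Concretely, let $X$ be two segments of lengths $1/2$ and $1/4+\epsilon$ glued at the root, $2^{-k}=1/4$, $\ell=2$, so the annulus is $\{1/4<d(\emptyset,\cdot)\le 1/2\}$. The set $S$ consisting of the root, the points at arclength $1/4-\delta$ and $7/16$ on the long branch, and the point at arclength $1/4-\delta$ on the short branch is a legitimate $1/8$-net of $X^{(1/2)}$ for small $\delta,\epsilon$; yet the tip of the short branch lies in the annulus, is at distance $11/16+\epsilon$ from the unique point of $S$ in the annulus, and every net point within $1/8$ of any annulus point on the short branch sits at radius $1/4-\delta\le 1/4$. So $S\cap A$, with or without your substitution, is not a $2^{-k}$-net of the annulus. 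Note that this $X$ is a compact length space (a real tree), so the length-space property cannot rescue the argument; indeed the paper's proof of this lemma never uses it.

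The repair, which is what the paper does, is to \emph{enlarge} the candidate net instead of searching inside $S\cap A$: let $S''$ be the set of $s\in S$ lying in the thin shell $X^{((\ell-1)2^{-k})}\setminus X^{((\ell-1)2^{-k}-2^{-k-1})}$ that admit some witness $y_s$ in the annulus with $d(s,y_s)\le 2^{-k-1}$, and take as net $\bigl(S\cap A_{\ell 2^{-k},k}(X)\bigr)\cup\{y_s,\ s\in S''\}$. Any $z$ in the annulus has some $s\in S$ with $d(z,s)<2^{-k-1}$, and $d(\emptyset,s)>d(\emptyset,z)-2^{-k-1}>(\ell-1)2^{-k}-2^{-k-1}$, so $s$ is either already in the annulus or in the thin shell; in the latter case $z$ itself certifies $s\in S''$ and $d(z,y_s)\le d(z,s)+d(s,y_s)<2^{-k}$. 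The cardinality bound survives because the thin shell and the annulus are disjoint, whence $\val{S\cap A_{\ell 2^{-k},k}(X)}+\val{S''}\le\val{S}\le N(\ell 2^{-k},2^{-k-1})$.
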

\begin{proof}
 Let $S'$ be a finite $2^{-k-1}$-net of $X^{(\ell 2^{-k})}$ of cardinal
 at most $N(\ell2^{-k},2^{-k-1})$. Let $S''$ be the set of elements $x$
 in $S'\cap A_{(\ell-1)2^{-k},k+1}(X)$ such that there exists at least
 one element, say $y_x$, in $A_{\ell2^{-k},k}(X)$ at distance at most
 $2^{-k-1}$ of $x$. The set $\left(S'\cap A_{\ell2^{-k},k}\right)
 \bigcup \{y_x, \ x\in S''\}$ is obviously a $2^{-k}$-net of
 $A_{\ell2^{-k},k}(X)$, and its cardinal is bounded by
 $N(\ell2^{-k},2^{-k-1})$. 
\end{proof}

\subsection{Proof of Theorem \ref{GHP:PreComL}}
Notice that we shall not use (ii) of Theorem \ref{theo:LL} in this
Section as it is not yet proved.

The proof will be divided in several parts. The idea, as in
\cite{Burago2001}, is to construct an abstract limit space, along with a
measure, and to check that we can get a convergence (up to
extraction). Let $(\cx_n, n\in \N)$ be a sequence in $\cc$, with
$\cx_n=(X_n, d^{X_n}, \emptyset_n, \mu_n)$. For $\ell,k\in \N$, we will
write $\ell_k$ for $\ell 2^{-k}$. 

\subsubsection{Construction of the limit space.} 
Let $\ell,k\in \N$. Recall that, by Lemma \ref{KNetCouronne}, we can
consider $\as_{\ell_k,k}^n$ a $2^{-k-1}$-net of the annulus
$A_{\ell_k,k}(X_n)$ with at most $N(\ell_k,2^{-k-2})$ elements. In
order to have a finer sequence of nets, we shall consider:
\[
S_{\ell_k,k}^n=\bigcup _{0\leq k'\leq k} \left(A_{\ell_k,k}(X_n)\cap
\as_{\lceil \ell_k 2^{k'} \rceil 2^{-k'},k'}^n\right).
\]
By construction $S^{n}_{\ell_k,k} $ is a $2^{-k-1}$-net of
$A_{\ell_k,k}(X_n)$ with cardinal at most:
\[
\bar N(\ell_k,2^{-k-2})=
\sum_{k'=0}^k N(\lceil \ell_k 2^{k'} \rceil 2^{-k'},2^{-k'-2}).
\]
Let
$U_{\ell_k,k}=\{(k,\ell,i); 0\le i\le \bar N(\ell_k,2^{-k-2}) \}$
and $U=\bigcup 
_{k\in \N, \ell\in \N} U_{\ell_k,k}$. 
We number the elements
of $S_{\ell_k,k}^n$ in such a way that:
\begin{equation}
 S_{\ell_k,k}^n \cup \{\emptyset_n\} = \{ x^n_u,\
u=(k,\ell,i), u\in U_{\ell_k, k} \}, 
\end{equation}
where $(x^n_u, u\in U)$ is some
sequence in $X_n$ and $x^n_{(k,\ell,0)}=\emptyset_n$. Notice that
$S_{\ell_k,k}^n$ is empty for $\ell_k$ large if $X_n$ is bounded. For
$u, u'\in U$, we set:
\[
 d_{u,u'}^n = d^{X_n}(x^n_u,x^n_{u'}). 
\]
Notice that the sequence $(d_{u,u'}^n, n\in \N)$ is bounded. Thus, without
loss of generality (by considering the sequence instead of the
sub-sequence), we may assume that for all $u,u'\in U$, the sequence
$(d^n_{u,u'}, n\ge 1)$ converges in $\R$ to some limit $d_{u,u'}$. We
then consider an abstract space, $X'=\{ x_u, u\in U\}$. On this space,
the function $d$ defined by $(x_u,x_{u'})\mapsto d_{u,u'}$ is a
semi-metric. We then consider the quotient space $X'{/\sim}$, where
$x_u\sim x_{u'}$ if $d_{u,u'}=0$. We shall denote by $x_u$ the
equivalent class containing $x_u$. Notice that $d_{u,u'}=0$ for any
$u=(k,\ell,0)$ and $u'=(k', \ell', 0)$ elements of $U$ and let
$\emptyset$ denote their equivalence class. Finally, we let $X$ be the
completion of $X'{/\sim}$ with respect to the metric $d$, so that
$(X,d,\emptyset)$ is a rooted complete metric space.

\subsubsection{Approximation by nets}

We set:
\[
U_{\ell_k,k}^+=\bigcup _{0\leq j\leq \ell} U_{j2^{-k},k},
\quad
 S^{n,+}_{\ell_k,k} = \bigcup _{0\leq j\leq \ell} S^n_{j2^{-k}, k}=
 \{x^n_u, u\in U_{\ell_k,k}^+\} 
\quad\text{and}\quad
 S^{+}_{\ell_k,k} = \{x_u, u\in U_{\ell_k,k}^+\}.
\]
By construction $S^{n,+}_{\ell_k,k} $ is a $2^{-k-1}$-net of
$X_n^{(\ell_k)}$ and $S^{n,+}_{\ell_k,k} \subset S^{n,+}_{\ell'_{k'},k'} $
as well as $S^{+}_{\ell_k,k} \subset S^{+}_{\ell'_{k'},k'} $ for any
$k\leq k'$ and $\ell_k\leq \ell'_{k'}$. 

\begin{rem}
 \label{rem:stricte}
 We also have that for $v\in U\backslash U^{+}_{\ell_k,k}$, either
 $x^n_v=\emptyset_n$ or $d^{X_n}(\emptyset_n, x^n_v)>\ell_k$ and
 either $x_v=\emptyset$ or $d(\emptyset, x_v)\geq \ell_k$. Notice
 that the
 former inequality is strict but the latter is large.
\end{rem}

A correspondence $R$ between two sets $A$ and $B$ is a subset of
$A\times B$ such that the projection of $R$ on $A$ (resp. $B$) is $A$
(resp. $B$). It is clear that the set defined by: 
\begin{equation}
\label{eq:correspondence}
{\crr}_{\ell_k,k}^{n,+} = \{ (x^n_u,{x}_u), u\in U_{\ell_k,k}^+\}
\end{equation}
is a correspondence between
$S^{n,+}_{\ell_k,k} $ and $S^{+}_{\ell_k,k} $.
The distorsion $\delta_n(\ell_k,k)$ 
of this correspondence is defined by:
\begin{equation}
\label{deltan} 
\delta_n(\ell_k,k) = \sup\{
|d^{X_n}(x_u^n,x_{u'}^n)-d(x_u,x_{u'})|; \ u,u'\in U_{\ell_k,k}^+\}. 
\end{equation}
Notice that for $k\leq k'$ and $\ell_k\leq \ell'_{k'}$, we have:
\begin{equation}
 \label{eq:ineg-delta}
\delta_n(\ell_k,k)\leq \delta_n(\ell'_{k'}, k').
\end{equation}
Since $U_{\ell_k, k}^+$ is finite, for all $\ell, k\in \N$, we have by
construction $\lim_{n\rightarrow+\infty } \delta_n(\ell_k,k) =0$.

\begin{lem}
 \label{lem:approx}
The set $S^{+}_{\ell_k,k} $ is a $2^{-k}$-net of
$X^{(\ell_k)}$. 
\end{lem}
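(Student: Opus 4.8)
The plan is to verify the two inclusions required by Definition \ref{defi:e-net}, namely $S^{+}_{\ell_k,k}\subset X^{(\ell_k)}$ and $X^{(\ell_k)}\subset (S^{+}_{\ell_k,k})^{2^{-k}}$.

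For the first inclusion I would simply pass to the limit: given $u\in U^{+}_{\ell_k,k}$, the point $x^n_u$ lies in $S^{n,+}_{\ell_k,k}\subset X_n^{(\ell_k)}$, so $d^{X_n}(\emptyset_n,x^n_u)\le \ell_k$ for every $n$; since $d^n_{u,u'}\to d_{u,u'}$ by construction of the limit space, this gives $d(\emptyset,x_u)\le \ell_k$, that is $x_u\in X^{(\ell_k)}$.

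For the second inclusion — the substantial one — I would fix $x\in X^{(\ell_k)}$ and a small $\epsilon>0$ (taking $\epsilon\le 2^{-k-3}$ at the very end). Using that $X'{/\sim}$ is dense in $X$, I pick $v\in U$ with $d(x,x_v)<\epsilon$, so $d(\emptyset,x_v)<\ell_k+\epsilon$ and hence $d^{X_n}(\emptyset_n,x^n_v)<\ell_k+2\epsilon$ for $n$ large. Now I invoke the length-space structure of the $X_n$: since each $X_n$ is a complete length space, Lemma \ref{lem:couronne} yields $X_n^{(\ell_k+2\epsilon)}\subset (X_n^{(\ell_k)})^{3\epsilon}$, so I may choose $z_n\in X_n^{(\ell_k)}$ with $d^{X_n}(x^n_v,z_n)<3\epsilon$; because $S^{n,+}_{\ell_k,k}$ is a $2^{-k-1}$-net of $X_n^{(\ell_k)}$, there is $u_n\in U^{+}_{\ell_k,k}$ with $d^{X_n}(z_n,x^n_{u_n})<2^{-k-1}$, whence $d^{X_n}(x^n_v,x^n_{u_n})<2^{-k-1}+3\epsilon$.

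Finally, since $U^{+}_{\ell_k,k}$ is finite I extract a subsequence along which $u_n$ equals a fixed $u\in U^{+}_{\ell_k,k}$; passing to the limit in $d^n_{v,u}\to d_{v,u}$ gives $d(x_v,x_u)\le 2^{-k-1}+3\epsilon$, and the triangle inequality then yields $d(x,x_u)<2^{-k-1}+4\epsilon\le 2^{-k}$, so $x\in (S^{+}_{\ell_k,k})^{2^{-k}}$. The main obstacle is precisely the step where the preimages $x^n_v$ of a point $x_v$ lying (almost) on the sphere $\partial_{\ell_k}X$ may fail to belong to $X_n^{(\ell_k)}$: it is the length-space property through Lemma \ref{lem:couronne}, together with the finiteness of $U^{+}_{\ell_k,k}$ which permits the extraction of a single limiting index, that resolves this; the remaining estimates are routine.
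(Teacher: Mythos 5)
Your proof is correct and follows essentially the same route as the paper's: approximate $x$ by some $x_v$ with $v\in U$, transfer to $X_n$ for $n$ large, use the length-space property via Lemma \ref{lem:couronne} to push $x^n_v$ into $X_n^{(\ell_k)}$, invoke the fact that $S^{n,+}_{\ell_k,k}$ is a $2^{-k-1}$-net there, and transfer the resulting estimate back to $X$. The only cosmetic differences are that you pass to the limit along a subsequence on which the chosen index $u_n$ is constant (where the paper instead fixes one large $n$ and uses the distortion bound $\delta_n$ directly), and that you explicitly check the inclusion $S^{+}_{\ell_k,k}\subset X^{(\ell_k)}$, which the paper leaves implicit.
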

\begin{proof}
 Let $x\in X^{(\ell_k)}$. There exists $v=(k',\ell',j)\in U$ such that
 $d(x,x_v)<2^{-k-3}$. Notice that $d(\emptyset, x_v) < \ell_k+
 2^{-k-3}$. We may choose $n$ large enough, so that $\delta_n(\ell_k
 \vee \ell'_{k'} ,k\vee k')<2^{-k-3}$. As $x^n_v\in S^{n,+}_{\ell_k
 \vee \ell'_{k'} ,k\vee k'}$, we have
 $|d^{X_n}(\emptyset_n, x^n_v) -d(\emptyset, x_v)|< 2^{-k-3}$ and thus
 $d^{X_n}(\emptyset_n, x^n_v)<\ell_k+ 2^{-k-2}$. Thanks to Lemma
 \ref{lem:couronne} and since $X_n$ is a length space, we get that
 $x^n_v$ belongs to $(X_n^{(\ell_k)})^{2^{-k-2}}$. As
 $S^{n,+}_{\ell_k,k} $ is a $2^{-k-1}$-net of $X_n^{(\ell_k)}$, there
 exists $u\in U_{\ell_k,k}^+$ such that $d^{X_n}(x^n_u , x^n_v)<
 2^{-k-1}+2^{-k-2}$. Furthermore, we have that $x^n_u$ and $x^n_v$
 belongs to $S^{n,+}_{\ell_k
 \vee \ell'_{k'} ,k\vee k'}$. We deduce that:
\[
d(x,x_u)
\leq d(x,x_v)+d(x_v,x_u)
\leq 2^{-k-3} + \delta_n(\ell_k
 \vee \ell'_{k'} ,k\vee k') + d^{X_n} (x^n_u , x^n_v) < 2^{-k}.
\]
This gives the result.
\end{proof}

We give an immediate consequence of this approximation by nets.
\begin{lem}
 \label{lem:X=length-space}
The metric space $(X,d)$ is a length space. 
\end{lem}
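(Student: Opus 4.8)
The plan is to verify the mid-point condition recalled in the excerpt: given $x,y\in X$ and $\varepsilon>0$, I must produce $z\in X$ with $|2d(x,z)-d(x,y)|+|2d(y,z)-d(x,y)|\le\varepsilon$. Since $X$ is the completion of $X'{/\sim}$ and the nets $S^+_{\ell_k,k}$ become $2^{-k}$-dense in $X^{(\ell_k)}$ by Lemma \ref{lem:approx}, it suffices to establish the mid-point condition for pairs of net points, up to an arbitrarily small error, and then pass to the limit by density and completeness.

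First I would fix $x,y\in X$; choosing $\ell,k$ with $\ell_k$ large, Lemma \ref{lem:approx} gives net points $x_u,x_{u'}\in S^+_{\ell_k,k}$ within $2^{-k}$ of $x,y$. Now I would pick $n$ large enough that the distortion $\delta_n(\ell_k,k)$ is tiny; then $x^n_u,x^n_{u'}$ lie in the length space $X_n$, so by the mid-point condition in $X_n$ there exists $z_n\in X_n$ with $|2d^{X_n}(x^n_u,z_n)-d^{X_n}(x^n_u,x^n_{u'})|+|2d^{X_n}(x^n_{u'},z_n)-d^{X_n}(x^n_u,x^n_{u'})|$ small. The point $z_n$ is close to the midpoint, hence (by the triangle inequality, since the $x^n$ are at bounded distance from $\emptyset_n$) lies in some ball $X_n^{(\ell'_{k'})}$ with $\ell'_{k'}$ controlled uniformly in $n$. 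Therefore $z_n$ is within $2^{-k'-1}$ of some net point $x^n_w\in S^{n,+}_{\ell'_{k'},k'}$, and I would replace $z_n$ by $x^n_w$ at the cost of a further $2^{-k'-1}$ error. Finally, using that $\delta_n(\cdot,\cdot)\to0$, the distances $d^{X_n}(x^n_u,x^n_w)$, $d^{X_n}(x^n_{u'},x^n_w)$ converge to $d(x_u,x_w)$, $d(x_{u'},x_w)$, so $z:=x_w\in X$ satisfies the mid-point inequality for $x_u,x_{u'}$ up to $O(2^{-k}+2^{-k'})$; combined with $d(x,x_u),d(y,x_{u'})\le 2^{-k}$ this gives the mid-point inequality for $x,y$ up to an error that goes to $0$ as $k,k'\to\infty$.

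The main obstacle is the bookkeeping that keeps the approximate midpoint $z_n$ inside a ball whose radius does not blow up with $n$, so that it can be captured by one of the finite nets $S^{n,+}_{\ell'_{k'},k'}$ and then transferred to the abstract space $X$ via the distortion estimate; once one observes that $d^{X_n}(\emptyset_n,z_n)\le d^{X_n}(\emptyset_n,x^n_u)+d^{X_n}(x^n_u,z_n)$ is bounded in terms of $\ell_k$ and $d(x,y)$ (which are fixed), this is routine. A slightly cleaner alternative, which I would actually prefer, is to prove directly that the dense subset $X'{/\sim}$ already satisfies the approximate mid-point condition — for net points $x_u,x_{u'}$ one argues exactly as above but lands $x^n_w$ in a net, whose image $x_w$ is a genuine point of $X'{/\sim}$ — and then invoke the standard fact that a complete metric space whose points can be $\varepsilon$-approximated by an approximate-midpoint-closed dense set is a length space (this is essentially Theorem 2.4.16 of \cite{Burago2001} applied after completion). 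Either route reduces the lemma to the approximation statement of Lemma \ref{lem:approx} plus the length-space property of each $X_n$, with no genuinely new estimate required.
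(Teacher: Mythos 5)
Your proposal is correct and follows essentially the same route as the paper's proof: verify the mid-point condition by approximating $x,y$ with net points via Lemma \ref{lem:approx}, produce an approximate midpoint in the length space $X_n$, capture it by a net point, and transfer back to $X$ using the distortion estimate. Your bookkeeping on the radius of the ball containing the approximate midpoint is, if anything, slightly more careful than the paper's.
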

\begin{proof}
 The proof of this Lemma is inspired by the proof of Theorem 7.3.25 in
 \cite{Burago2001}. We shall check that $(X,d)$ satisfies the mid-point
 condition.

 Let $k\in \N$ and $x,x'\in X$. According to Lemma \ref{lem:approx},
 there exists $\ell\in \N$ large enough and $u,u'\in U^+_{\ell_k, k}$
 such that $d(x,x_u)<2^{-k}$ and $d(x',x_{u'})<2^{-k}$. For $n$ large
 enough, we get that $\delta_n(\ell_k,k) <2^{-k}$. Since $(X_n,
 d^{X_n})$ is a length space, there exists $z\in X_n$ such that:
\[
|2d^{X_n}(z,x^n_u)- d^{X_n}(x^n_u,x^n_{u'})|+ 
|2d^{X_n}(z,x^n_{u'})- d^{X_n}(x^n_u,x^n_{u'})|\leq 2^{-k}.
\]
There exists $u''\in U^+_{\ell_k, k}$ such that $d^{X_n}(x^n_{u''}, z) \leq
2^{-k}$. Then, we deduce that:
\begin{multline*}
|2d(x_{u''},x)- d(x,x')|+ 
|2d(x_{u''},x')- d(x,x')|\\ 
\begin{aligned}
& \leq 
4d(x,x_u)+4d(x',x_{u'})+ 
|2d(x_{u''},x_u)- d(x_u,x_{u'})|+ 
|2d(x_{u''},x_{u'})- d(x_u,x_{u'})|\\
& \leq 8. 2^{-k} + 6 \delta_n(\ell_k,k) + |2d^{X_n}(x^n_{u''},x^n_u)- 
d^{X_n}(x^n_u,x^n_{u'})|+ 
|2d^{X_n}(x^n_{u''},x^n_{u'})- d^{X_n}(x^n_u,x^n_{u'})|\\ 
& \leq 19. 2^{-k}. 
\end{aligned}
\end{multline*}
Since $k$ is arbitrary, we get that $(X,d)$ satisfies the mid-point
condition and thus is a length space.
\end{proof}

\subsubsection{Approximation of the measures}
\label{sec:approx-meas}
Let $(V^n_u, u\in U_{\ell_k,k})$ be Borel subsets of
$A_{\ell_k,k}(X_n)$ with diameter
less than $2^{-k}$ 
such that $\bigcup _{u\in U_{\ell_k,k}} V^n_u= A_{\ell_k,k}(X_n)$ 
 and 
for all $u,u'\in U_{\ell_k,k}$, we have $V^n_u\bigcap
V^n_{u'}=\emptyset$ and $x^n_u\in V^n_u$ as soon as $V^n_u\neq
\emptyset$. We set $U_{\infty ,k}=\bigcup _{ \ell\in \N} U_{\ell_k,k}$
and we consider the following
approximation of the measure $\mu_n$:
\[
 \mu_{n,k} = \sum _{u\in U_{\infty ,k}} \mu_n(V^n_u) \delta_{x_u^n}.
\]
Notice that $ \mu_{n,k}^{(\ell_k)} = \sum _{u\in U_{\ell_k ,k}} \mu_n(V^n_u)
 \delta_{x_u^n}$. 
The measures $\mu_{n,k}$ are locally finite Borel measures on $X_n$. It
is clear that the sequence $(\mu_{n,k}, k\in \N)$ converges 
vaguely towards $\mu_n$
as $k$ goes to infinity, since we have for any $r\in \N$,
$d_\text{P}^{d^{X_n}}(\mu_{n,k}^{(r)},\mu_n^{(r)})\le 2^{-k}$. 
On the limit space $X$, we define:
\[
 \nu_{n,k} = \sum_{u\in U_{\infty ,k}}
\mu_n(V^n_u) \delta_{x_u}
\quad\text{and}\quad
 \nu_{n,k}^{\{\ell_k\}} = \sum_{u\in U_{\ell_k ,k}}
\mu_n(V^n_u) \delta_{x_u}. 
\]
Notice that $\nu_{n,k}^{\{\ell_k\}}\leq \nu_{n,k}^{(\ell_k)} $ but they
may be distinct as $\nu_{n,k}^{(\ell_k)} $ may have some atoms on
$\partial_{\ell_k} X$ which are in $S_{(\ell+1)_k,k}^+$ but not in
$S_{\ell_k,k}^+$, as indicated in Remark \ref{rem:stricte}.

Let us show that the sequence $(\nu_{n,k}, k\in \N)$ converges, up to an
extraction, towards a locally finite measure $\nu$ on $X$. For $m \in
2^{-k} \N$, we
have:
\begin{align}
\nonumber
 \nu_{n,k}(X^{(m)}) 
= \sum_{u\in U_{\infty ,k}}
\mu_n(V^n_u) \ind_{\{d(x_u,\emptyset)\le m\}} 
& \le \sum_{u\in U_{\infty ,k}}
\label{eq:majo-mu-nu}
\mu_n(V^n_u) \ind_{\{d^{X_n} (x_u^n,\emptyset_n)\le m+\delta_n(m,k)\}}\\
& \le \mu_n(X_n^{(m+\delta_n(m,k)+2^{-k})}),
\end{align}
where for the first inequality we used \reff{deltan}. 
Recall that
for all $\ell, k\in \N$, we have $\lim_{n\rightarrow+\infty }
\delta_n(\ell_k,k) =0$. We define $\eta_k= \delta_{n_k}(k,k) $. 
Using a diagonal argument, there exists a
sub-sequence $(n_k, k\in \N)$ such that:
\begin{equation}
 \label{eq:eta}
\eta_k\leq 2^{-k}.
\end{equation}
By \reff{eq:ineg-delta}, we have $\delta_{n_k} (m,k)\leq \eta_k$ for
$k\geq m$. Thanks to property (ii) of Theorem \ref{GHP:PreComL}, we get
that $\mu_{n_k}{(X_{n_k})^{(m+\delta_{n_k}(m,k)+2^{-k})}}$ is uniformly
bounded in $k\in \N$ for $m$ fixed. From the classical pre-compactness
criterion for vague convergence of locally finite measures on a Polish
metric space (see Appendix 2.6 of \cite{Daley2003}), we deduce that there
exists an extraction of the sub-sequence $(n_k, k\in \N)$, which we still
note $(n_k, k\in \N)$, such that $(\nu_{n_k,k}, k\in \N)$ converges
vaguely towards some locally finite measure $\nu$ on $X$. This implies
 the weak convergence of the finite measures
$(\nu_{n_k,k}^{(r)}, k\in \N)$ towards $\nu^{(r)}$ as soon as
$\nu(\partial_r X)=0$. Since $\nu$ is locally finite, the set 
\begin{equation}
 \label{eq:A-nu}
A_\nu=\{r\geq 0; \ \nu(\partial_r X)>0\}
\end{equation}
is at most countable. Thus, we have
$\lim_{n\rightarrow+\infty } d_\text{P} (\nu_{n_k,k}^{(r)}
,\nu^{(r)})=0$ for almost every $r>0$.

\subsubsection{Convergence in the $d_{\text{GHP}}$ metric.} 
We set $\cx=(X,d,\emptyset, \nu)$. Notice that $\cx\in \LL$ thanks to
Lemma \ref{lem:X=length-space}. We shall
prove that $d_{\text{GHP}}(\cx_{n_k},\cx )$ converges to $0$.\\

Let $r>0$. For any $k\in \N$, set $\ell=\lceil 2^k r \rceil$ and recall
$\ell_k=2^{-k}\lceil 2^k r \rceil$. We set:
\[
\cy^n_k=(S_{\ell_k,k}^{n,+}, d^{X_n}, \emptyset_n, \mu_{n,k}^{(\ell_k)}),
\quad
\cz^n_k=(S_{\ell_k,k}^{+}, d, \emptyset, \nu_{n,k}^{\{\ell_k\}})
\quad\text{and}\quad
\cw_k^n=(X^{(\ell_k)}, d, \emptyset, \nu_{n,k}^{\{\ell_k\}}).
\]
 The
triangular inequalities give:
\begin{equation}
 \label{eq:majo-dghpc}
 d_{\text{GHP}}^c(\cx_n^{(r)},\cx^{(r)})
\leq B_n^1+B_n^2+B_n^3+B_n^4+B_n^5+B_n^6,
\end{equation}
with:
\begin{align*}
B_n^1
&= d_{\text{GHP}}^c\left(\cx_n^{(r)}, \cx_n^{(\ell_k)}\right),
 \quad
B_n^2
= d_{\text{GHP}}^c\left(\cx_n^{(\ell_k)}, \cy^n_k \right) ,
\quad
B_n^3
= d_{\text{GHP}}^c\left(\cy^n_k, \cz_k^n \right) ,\\
B_n^4
&= d_{\text{GHP}}^c\left(\cz_k^n ,\cw^n_k\right) ,
\quad
B_n^5
= d_{\text{GHP}}^c \left( \cw^n_k, \cx^{(\ell_k)}\right) , 
\quad
B_n^6
=d_{\text{GHP}}^c \left( \cx^{(\ell_k)}, \cx^{(r)}\right).
\end{align*}

Lemma \ref{df:estimation} implies that:
\begin{equation}
 \label{eq:B1}
B_n^1= d_{\text{GHP}}^c\left(\cx_n^{(r)}, \cx_n^{(\ell_k)}\right) \leq
2^{-k} + \mu_n(X_n^{(\ell_k)}\setminus X_n^{(r)}). 
\end{equation}

As $S^{n,+}_{\ell_k,k} $ is a $2^{-k-1}$-net of
$X_n^{\ell_k}$ and by definition of $\mu_{n,k}$, we clearly have:
\[
 d_\text{H}^{d^{X_n}}(X_n^{(\ell_k)},S^{n,+}_{\ell_k,k}) \le 2^{-k-1} 
\quad\text{and}\quad
 d_\text{P}^{d^{X_n}} (\mu_n^{(\ell_k)}, \mu_{n,k}
 \ind_{S^{n,+}_{\ell_k,k}}) \le 2^{-k}.
\]
By considering the identity map from $S^{n,+}_{\ell_k,k}$ to
$X^{(\ell_k)}$, we deduce that:
\begin{equation}
 \label{eq:B2}
B_n^2= d_{\text{GHP}}^c\left(\cx_n^{(\ell_k)}, \cy^n_k \right)\leq
2^{-k+1}.
\end{equation}

Recall the correspondence \reff{eq:correspondence}. It is easy to check
that the
function defined on $\left(S_{\ell_k,k}^{n,+}\sqcup S_{\ell_k,k}^{+}
\right)^2$ by:
\begin{equation}
 d_n(y,z) = 
\begin{cases}
 d^{X_n}(y,z) & \text{if}\ y,z\in S_{\ell_k,k}^{n,+},\\
	 d(y,z) & \text{if}\ y,z\in S_{\ell_k,k}^{+},\\
	 \inf\{ d^{X_n}(y,y')+d(z,z')
+\frac{1}{2} \delta_n( \ell_k,k)
; \ (y',z')\in \crr^{n,+}_{\ell_k,k} \} & \text{if}\ y\in S_{\ell_k,k}^{n,+}
,z\in S_{\ell_k,k}^{+}
 \end{cases}
\end{equation}
is a metric. For this particular metric, we easily have
$d_n(\emptyset_n,\emptyset)\leq \inv{2} \delta_n( \ell_k,k)$ as well as:
\[
d_\text{H}^{d_n}(S_{\ell_k,k}^{n,+},S_{\ell_k,k}^{+}) \leq \frac{1}{2}
\delta_n( \ell_k,k)
\quad\text{and}\quad
d_\text{P}^{d_n}(\mu_{n,k}^{(\ell_k)} ,\nu_{n,k}^{\{\ell_k\}}
) \leq \frac{1}{2}
\delta_n( \ell_k,k). 
\]
We deduce that:
\begin{equation}
 \label{eq:B3}
B_n^3= d_{\text{GHP}}^c\left(\cy^n_k, \cz_k^n \right)\leq
\frac{3}{2}\delta_n( \ell_k,k). 
\end{equation}

As $S^{+}_{\ell_k,k} $ is a $2^{-k}$-net of
$X^{\ell_k}$, thanks to Lemma \ref{lem:approx}, we get:
\begin{equation}
 \label{eq:B4}
B_n^4
= d_{\text{GHP}}^c\left(\cz_k^n ,\cw^n_k\right)\leq 2^{-k}.
\end{equation}

Concerning $B_n^5$, we only need to bound the Prokhorov distance between
$ \nu_{n,k}^{\{\ell_k\}}$ and $\nu_{n,k}^{(\ell_k)}$. Recall that
$\nu_{n,k}^{\{\ell_k\}}\leq \nu_{n,k}^{(\ell_k)} $ and that
$\nu_{n,k}^{(\ell_k)} $ may differ only on $\partial_{\ell_k}
X$. For $A$ closed, we have:
\[
\nu_{n,k}^{\{\ell_k\}}(A)\leq
\nu_{n,k}^{(\ell_k)}(A)
\quad\text{and}\quad
\nu_{n,k}^{(\ell_k)}(A)\leq \nu_{n,k}^{\{\ell_k\}}(A) +
\nu_{n,k}(\partial_{\ell_k} X).
\]
Recall \reff{eq:A-nu}. Let $\rho(r)\geq r+3$ such that $\rho(r)\not\in
A_\nu$ and:
\begin{equation}
 \label{eq:def-em}
\varepsilon_{n,k}=2d_\text{P}(\nu_{n,k}^{(\rho(r))}, \nu^{(\rho(r))}).
\end{equation}
As $\ell_k\leq r+2^{-k}$, we have:
\[
\nu_{n,k}(\partial_{\ell_k} X)\leq \nu ((\partial_{\ell_k}
X)^{\varepsilon_{n,k}})+ \varepsilon_{n,k}
\leq \nu (X^{(r+2^{-k}+\varepsilon_{n,k})}\backslash X^{(r
 -2\varepsilon_{n,k})}) 
+\varepsilon_{n,k}. 
\]
We deduce that:
\begin{equation}
 \label{eq:B5}
B_n^5
= d_{\text{GHP}}^c \left( \cw^n_k, \cx^{(\ell_k)}\right) \leq \nu
(X^{(r+2^{-k}+\varepsilon_{n,k})}\backslash X^{(r 
 -2\varepsilon_{n,k})}) +\varepsilon_{n,k}. 
\end{equation}

Lemma \ref{df:estimation}
and the fact that $X$ is a length space gives:
\begin{equation}
 \label{eq:B6}
B_n^6
=d_{\text{GHP}}^c \left( \cx^{(\ell_k)}, \cx^{(r)}\right)
\leq 2^{-k}+ \nu (X^{(\ell_k)}\backslash X^{(r)}). 
\end{equation}

Putting \reff{eq:B1}, \reff{eq:B2}, \reff{eq:B3}, \reff{eq:B4},
\reff{eq:B5}, \reff{eq:B6} in \reff{eq:majo-dghpc}, we get:
\begin{multline}
 \label{eq:majo-dghpc2}
 d_{\text{GHP}}^c(\cx_n^{(r)},\cx^{(r)})
\leq 5\cdot 2^{-k}+ \mu_n(X_n^{(\ell_k)}\backslash X_n^{(r)} )
\\+ \frac{3}{2}\delta_n( \ell_k,k)
+ \nu
(X^{(r+2^{-k}+\varepsilon_{n,k})}\backslash X^{(r 
 -2\varepsilon_{n,k})}) +\varepsilon_{n,k} + \nu (X^{(\ell_k)}\ X^{(r)}). 
\end{multline}

We give a more precise upper bound for $\mu_n(X_n^{(\ell_k)}\backslash
X_n^{(r)} )$. Using arguments similar to those used to get
\reff{eq:majo-mu-nu}, we have:
\begin{align*}
\mu_n(X_n^{(\ell_k)}\backslash X_n^{(r)} )
&\leq \mu_n(X_n^{(\ell_k)}) -\mu_n(X_n^{(\ell_k-2^{-k})})\\
&\leq \nu_{n,k}( X^{(\ell_k+ \delta_n(\ell_k,k)+2^{-k})}) - 
\nu_{n,k}( X^{(\ell_k- \delta_n(\ell_k,k)-4\cdot 2^{-k})}) . 
\end{align*}
For $k\geq r+1$, we have $\delta_n(\ell_k,k)\leq \delta_n(k,k)$ thanks
to \reff{eq:ineg-delta}. Then using the sub-sequence $(n_k, k\in \N)$
defined at the end of Section \ref{sec:approx-meas} with \reff{eq:eta},
we get that:
\begin{align*}
 \mu_{n_k}(X_ {n_k}^{(\ell_k)}\backslash X_{n_k}^{(r)} )
&\leq \nu_{n_k,k}( X^{(\ell_k+ 2\cdot 2^{-k})}) - 
\nu_{n_k,k}( X^{(\ell_k -5\cdot 2^{-k})}) \\
&\leq \nu_{}( X^{(\ell_k+ 2\cdot 2^{-k}+\varepsilon_{n_k,k})}) - 
\nu( X^{(\ell_k-5\cdot 2^{-k}- \varepsilon_{n_k,k})}) + 2\varepsilon_{n_k,k}.
\end{align*}
Notice that the sub-sequence $(n_k, k\in\N)$ does not depend on $r$: it is the
same for all $r\geq 0$. Using \reff{eq:majo-dghpc2}, we get for $k\geq r+1$:
\[
 d_{\text{GHP}}^c(\cx_{n_k}^{(r)},\cx^{(r)})
\leq 5\cdot 2^{-k}+ \frac{3}{2}\eta_k 
+ 2\nu
(X^{(\ell_k+2^{-k}+\varepsilon_{n,k})}\backslash X^{(\ell_k -5\cdot 2^{-k} 
 -2\varepsilon_{n,k})}) +3\varepsilon_{n_k,k}.
\]
As $\lim_{k\rightarrow+\infty } \ell_k=r$ and $\lim_{k\rightarrow+\infty
} \varepsilon_{n_k,k}=0$, we get using \reff{eq:eta}, that for $r\not
\in A_\nu$:
\[
\lim_{k\rightarrow+\infty }
d_{\text{GHP}}^c(\cx_{n_k}^{(r)},\cx^{(r)})=0.
\]
By dominated convergence, we get that $\lim_{k\rightarrow+\infty }
d_{\text{GHP}}(\cx_{n_k},\cx) =0$. Thus we have a converging sub-sequence
in $\cc$.

\subsection{Proof of (ii) of Theorem \ref{theo:LL}}

We need to prove that the metric space $(\LL, d_\text{GHP})$ is
separable and complete.

\begin{lem}
 \label{lem:sep}
The metric space $(\LL, d_\text{GHP})$ is separable.
\end{lem}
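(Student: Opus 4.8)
The plan is to exhibit an explicit countable dense family and verify density using the tools already in place. The natural candidate is the family of \emph{finite} rooted weighted metric spaces with rational structure: spaces $(F, d, \emptyset, \mu)$ where $F$ is a finite set containing $\emptyset$, all pairwise distances $d(x,y)$ are rational, and $\mu$ is a finite sum of atoms with rational masses located at points of $F$ — subject to the constraint that $(F,d)$ embeds isometrically into \emph{some} element of $\LL$ (equivalently, to be safe, one may instead take the closure in $\LL$ of the image of such finite data, or restrict to finite data coming from nets of length spaces). Call this countable collection $\cd$. First I would observe that $\cd$ is countable: it is a countable union over the finite index set of atoms, over rational distance matrices, and over rational masses.

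Next I would show $\cd$ is dense. Fix $\cx = (X, d, \emptyset, \mu) \in \LL$ and $\varepsilon > 0$. Pick $R$ large enough that $\int_R^\infty \expp{-r}\,dr < \varepsilon/3$, so that $d_\text{GHP}(\cx, \cy)$ is controlled by $\sup_{r \le R} \big(1 \wedge d^c_\text{GHP}(\cx^{(r)}, \cy^{(r)})\big)$ up to $\varepsilon/3$ for any $\cy$ with $\cy^{(r)} = \cx^{(r)}$ for $r \ge$ some threshold — more precisely I would choose $\cy$ to be a compact approximation of $\cx^{(R)}$ so that restrictions at every radius $r$ are close. Since $X$ is locally compact and complete, $X^{(R)}$ is compact (Hopf--Rinow), so for any $k$ there is a finite $2^{-k}$-net $S_k \ni \emptyset$ of $X^{(R)}$. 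As in the proof of Lemma \ref{lem:dc=0}, partition $X^{(R)}$ into Borel cells $V_i$ of diameter $< 2^{-k}$ with $x_i \in V_i$, and form $\mu_k = \sum_i \mu(V_i)\,\delta_{x_i}$; then $d_\text{H}^X(X^{(R)}, S_k) \le 2^{-k}$ and $d_\text{P}^X(\mu_k, \mu^{(R)}) \le 2^{-k}$, hence $d^c_\text{GHP}\big((S_k, d, \emptyset, \mu_k), \cx^{(R)}\big) \le 2^{-k+1}$. Finally perturb the distances $d(x_i,x_j)$ and the masses $\mu(V_i)$ to nearby rationals; this changes the Hausdorff and Prokhorov distances (via Lemma \ref{df:uniondisjointe}, comparing on the disjoint union with the obvious matching metric) by at most a fixed multiple of the perturbation size. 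Choosing $k$ large and the rational perturbations small gives an element of $\cd$ within $d^c_\text{GHP}$-distance $\varepsilon/3$ of $\cx^{(R)}$; combined with Lemma \ref{lem:reg-d-GHP} / Lemma \ref{df:estimation} to pass from restrictions to the full $d_\text{GHP}$ distance, and with the tail bound, this yields $d_\text{GHP}(\cx, \cd) < \varepsilon$.

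The main obstacle is bookkeeping rather than conceptual: one must ensure that the finite rational space constructed is genuinely an element of $\LL$ (a finite metric space is not a length space, and a root-preserving finite space need not be complete-length), so the cleanest route is probably to replace "finite rational spaces" by "finite rational spaces glued into real trees" — i.e. take $\cd$ to be rational finite-length-tree approximations, which do lie in $\T \subset \LL$ by Corollary \ref{cor:T}, at the cost of a small extra embedding/length-space argument (each finite configuration can be realized as a subset of a finite $\R$-tree with rational edge lengths, and the net of such a tree recovers the configuration up to $2^{-k}$). One also has to be careful that perturbing distances keeps the triangle inequality, which is automatic if the perturbations are small compared to the minimal nonzero distance. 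Once these technical points are dispatched, separability follows; completeness is then obtained separately from the pre-compactness criterion Theorem \ref{GHP:PreComL} applied to Cauchy sequences, exactly as in the compact case.
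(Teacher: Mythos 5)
Your overall scheme (truncate to $\cx^{(R)}$ using the exponential tail, then approximate the compact ball by a finite rational net with discretized measure) is the same as the paper's, and you correctly identified the one real obstacle: the finite rational spaces you produce are not length spaces, hence not elements of $\LL$, so they cannot form your dense family. The repair you propose, however, does not work. A finite metric space embeds isometrically in an $\R$-tree only if it satisfies the four-point condition, and the finite configurations arising as nets of general length spaces typically violate it: the four-point net $\{a,b,c,d\}$ of a circle with $d(a,b)=d(b,c)=d(c,d)=d(d,a)=1$ and $d(a,c)=d(b,d)=2$ gives $d(a,c)+d(b,d)=4>2$, so it sits at a definite, non-vanishing distance from every subset of every real tree. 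More structurally, Corollary \ref{cor:T} says $\T$ is a \emph{closed} proper subset of $\LL$, so no family of trees (rational or otherwise) can be dense in $\LL$; a circle is already a counterexample. Your alternative suggestions (taking a closure, or restricting to data "coming from nets of length spaces") do not resolve this either: a closure of a countable set is not a countable dense set, and nets of length spaces are still finite metric spaces, not length spaces.

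The fix the paper uses is to avoid building length spaces out of finite data altogether. The finite rational spaces are dense in $(\K,d^c_{\text{GHP}})$ (your net-plus-rational-perturbation argument proves exactly this, and in $\K$ there is no length-space constraint), so $(\K,d^c_{\text{GHP}})$ is separable. Separability of metric spaces is hereditary, so the subspace $\K\cap\LL$ admits a countable dense subset \emph{consisting of genuine elements of $\K\cap\LL$} — this is where the membership problem evaporates. Proposition \ref{prop:K-LL} then transfers separability of $\K\cap\LL$ from the $d^c_{\text{GHP}}$ topology to the $d_{\text{GHP}}$ topology, and finally $\K\cap\LL$ is dense in $(\LL,d_{\text{GHP}})$ because $d_{\text{GHP}}(\cx^{(r)},\cx)\le \expp{-r}$, which is your truncation step. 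With your construction rerouted through this hereditarity argument, the proof closes; as it stands, the dense family is not in the space.
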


\begin{proof}
 We can notice that the set $\K\cap\LL$ is dense in
 $(\LL,d_{\text{GHP}})$, since for $\cx\in \LL$, for all $r>0$ we have
 $\cx^{(r)}\in \K$ and $d_{\text{GHP}}(\cx^{(r)},\cx)\le
 \expp{-r}$. Every element of $\K$ can be approximated in the
 $d_{\text{GHP}}^c$ topology by a sequence of metric spaces with finite
 cardinal, rational edge-lengths and rational weights. Hence,
 $(\K\cap\LL,d_{\text{GHP}}^c)$ is separable, being a subspace of a
 separable metric space. According to Proposition \ref{prop:K-LL},
 $(\K\cap\LL,d_{\text{GHP}})$ is also separable. As $\K\cap\LL$ is
 dense in $(\LL,d_{\text{GHP}})$, we deduce that $(\LL,d_{\text{GHP}})$
 is separable. 
\end{proof}

\begin{lem}
 \label{lem:complet}
The metric space $(\LL, d_\text{GHP})$ is complete.
\end{lem}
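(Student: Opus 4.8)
The plan is to show that any Cauchy sequence $(\cx_n, n\in\N)$ in $(\LL, d_\text{GHP})$ is relatively compact; combined with the Cauchy property this yields convergence, hence completeness. By the pre-compactness criterion of Theorem \ref{GHP:PreComL}, it suffices to check that the family $\{\cx_n\}$ satisfies, for every $r\ge 0$: (i) a uniform finite $\varepsilon$-net bound on the balls $X_n^{(r)}$, and (ii) a uniform bound $\sup_n \mu_n(X_n^{(r)})<+\infty$. So the whole proof reduces to transferring the Cauchy hypothesis on $d_\text{GHP}$ into these two uniform controls on the restrictions $\cx_n^{(r)}$.

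First I would extract, for each fixed $r$, information at the level of $d_\text{GHP}^c$. The definition of $d_\text{GHP}$ as an integral of $1\wedge d_\text{GHP}^c(\cx_n^{(s)},\cx_m^{(s)})$ against $\expp{-s}\,ds$, together with the c\`adl\`ag regularity from Lemma \ref{lem:reg-d-GHP}, shows that $d_\text{GHP}(\cx_n,\cx_m)\to 0$ forces $d_\text{GHP}^c(\cx_n^{(s)},\cx_m^{(s)})\to 0$ for Lebesgue-almost every $s$. Fix such an $s_0$ arbitrarily large (in particular $s_0\ge r$ for the given $r$); then $(\cx_n^{(s_0)})$ is Cauchy in $(\K, d_\text{GHP}^c)$. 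Now invoke the estimates already used in the proof of (ii) of Theorem \ref{theo:dcGHP}: for compact rooted weighted spaces, $d_\text{GHP}^c(\cx,\cy)\ge |\mu(X)-\nu(Y)|$ (so masses are uniformly bounded along the Cauchy sequence, giving condition (ii) of Theorem \ref{GHP:PreComL} at level $s_0$, hence a fortiori $\mu_n(X_n^{(r)})\le\mu_n(X_n^{(s_0)})$ is uniformly bounded), and $d_\text{GHP}^c(\cx,\cy)\ge d_\text{GH}^c((X,d),(Y,d'))$, so $(X_n^{(s_0)}, d^{X_n})$ is Cauchy for the Gromov--Hausdorff metric; then Proposition 7.4.11 of \cite{Burago2001} supplies the uniform $\varepsilon$-net bound on $X_n^{(s_0)}$.

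The one subtlety is that these uniform bounds are obtained only at the level of the ball of radius $s_0$, where $s_0$ is \emph{almost} arbitrary — we can pick $s_0$ as large as we like, but only outside a Lebesgue-null set. To get the criterion of Theorem \ref{GHP:PreComL} for \emph{every} $r\ge 0$, I would simply note that for any given $r$ one can choose an admissible $s_0\ge r$ (the set of good $s$ has full measure, hence is unbounded), and then $X_n^{(r)}\subset X_n^{(s_0)}$ gives: an $\varepsilon$-net of $X_n^{(s_0)}$ with at most $N(s_0,\varepsilon)$ elements intersected appropriately yields an $\varepsilon$-net of $X_n^{(r)}$ of no larger cardinality (alternatively, directly use that a $(\varepsilon/2)$-net of $X_n^{(s_0)}$ restricted to points within $\varepsilon/2$ of $X_n^{(r)}$ is an $\varepsilon$-net of $X_n^{(r)}$), and likewise $\mu_n(X_n^{(r)})\le\mu_n(X_n^{(s_0)})$. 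So conditions (i) and (ii) of Theorem \ref{GHP:PreComL} hold for all $r$, with constants $N(r,\varepsilon)$ and mass bounds inherited from a suitable $s_0\ge r$.

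Applying Theorem \ref{GHP:PreComL}, the family $\{\cx_n\}$ is relatively compact in $(\LL, d_\text{GHP})$, so some subsequence converges to a limit $\cx\in\LL$; since the full sequence is Cauchy, it converges to $\cx$. This proves completeness, and together with Lemma \ref{lem:sep} establishes that $(\LL, d_\text{GHP})$ is Polish, i.e.\ (ii) of Theorem \ref{theo:LL}. I expect the main (and only real) obstacle to be the bookkeeping in the previous paragraph: making sure the almost-everywhere convergence of $s\mapsto d_\text{GHP}^c(\cx_n^{(s)},\cx_m^{(s)})$ is leveraged at a radius $s_0$ large enough to dominate the prescribed $r$, and checking that net cardinalities and masses genuinely decrease under restriction to a smaller ball (the former needing the trivial observation that restricting a net and keeping points near the smaller ball does not increase cardinality).
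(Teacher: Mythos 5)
Your overall strategy is the paper's: reduce completeness to showing that a Cauchy sequence satisfies conditions (i) and (ii) of Theorem \ref{GHP:PreComL}, using the inequalities \reff{eq:diamP} and \reff{eq:GHP-GH} together with Proposition 7.4.11 of \cite{Burago2001}. However, the step on which everything else rests --- ``the definition of $d_{\text{GHP}}$ as an integral, together with the c\`adl\`ag regularity from Lemma \ref{lem:reg-d-GHP}, shows that $d_{\text{GHP}}(\cx_n,\cx_m)\to 0$ forces $d^c_{\text{GHP}}(\cx_n^{(s)},\cx_m^{(s)})\to 0$ for Lebesgue-almost every $s$'' --- is not justified as stated, and this is a genuine gap. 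What you actually get from the Cauchy hypothesis is that the functions $s\mapsto 1\wedge d^c_{\text{GHP}}(\cx_n^{(s)},\cx_m^{(s)})$ tend to $0$ in $L^1(\expp{-s}ds)$ as $\min(n,m)\to\infty$, and $L^1$-convergence does not imply almost-everywhere convergence; the standard sliding-indicator counterexample consists of functions that are each c\`adl\`ag (even piecewise constant), so the regularity of each individual function in $s$ buys you nothing. What would rescue the implication is some \emph{uniform} (in $n,m$) modulus of continuity, and the estimate of Lemma \ref{df:estimation} only gives $|h_{n,m}(r)-h_{n,m}(r+\epsilon)|\le 2\epsilon+\mu_n(X_n^{(r+\epsilon)}\setminus X_n^{(r)})+\mu_m(\cdots)$, whose measure terms are not uniformly controlled a priori. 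Without this step you have no admissible radius $s_0$ at which $(\cx_n^{(s_0)})_n$ is $d^c_{\text{GHP}}$-Cauchy, so neither the mass bound nor the net bound follows.

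The paper circumvents exactly this difficulty by never requiring pointwise convergence of the full $d^c_{\text{GHP}}$: for condition (i) it works only with the Gromov--Hausdorff part $g_{n,m}(r)=d^c_{\text{GH}}(X_n^{(r)},X_m^{(r)})$, which \emph{is} uniformly $2$-Lipschitz in $r$ (no measure contribution), so that $L^1$-convergence upgrades to pointwise convergence for every $r$; and for condition (ii) it argues by contradiction, using that $r\mapsto \mu_n(X_n^{(r)})$ is nondecreasing, so a mass blow-up at some $r_0$ forces a contribution of at least $\expp{-r_0}$ to the integral, contradicting the Cauchy property. Your argument can be repaired either by splitting the two contributions in this way, or more cheaply by first passing to a subsequence with $d_{\text{GHP}}(\cx_{n_k},\cx_{n_{k+1}})\le 2^{-k}$: then $\sum_k \int_0^\infty \expp{-s}\bigl(1\wedge d^c_{\text{GHP}}(\cx_{n_k}^{(s)},\cx_{n_{k+1}}^{(s)})\bigr)ds<\infty$, so by monotone convergence $(\cx_{n_k}^{(s)})_k$ is $d^c_{\text{GHP}}$-Cauchy for almost every $s$, and it suffices to prove relative compactness of this subsequence since a Cauchy sequence with a convergent subsequence converges. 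The final bookkeeping (choosing an admissible $s_0\ge r$ from a full-measure set and restricting nets and masses to the smaller ball) is fine and matches what the paper does in spirit in Lemma \ref{KNetCouronne}.
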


\begin{proof}
 Let $(\cx_n, n\in \N)$, with $\cx_n=(X_n, d^{X_n}, \emptyset_n,
 \mu_n)$, be a Cauchy sequence in $(\LL, d_\text{GHP})$. It is enough to
 prove that it is relatively compact. Thus, we need to prove it satisfies
 condition (i) and (ii) of Theorem \ref{GHP:PreComL}.\\

Assume there exists $r_0\in \R_+$ such that $\sup_{n\in \N}
\mu_n(X_n^{(r_0)})=+\infty $. By considering a sub-sequence, we may
assume that $\lim_{n\rightarrow+\infty } \mu_n(X_n^{(r_0)})=+\infty
$. This implies that for any $r\geq r_0$, $\lim_{n\rightarrow+\infty }
\mu_n(X_n^{(r)})=+\infty$. Thus, we have for any $m\in \N$:
\[
\lim_{n\rightarrow+\infty } \int_0^{+\infty } \expp{-r}\; \left(1\wedge
 \val{ \mu_n(X_n^{(r)}) - \mu_m(X_m^{(r)})} 
\right)\; dr \geq \expp{-r_0}.
\]
Then use \reff{eq:diamP} to get that $(\cx_n, n\in \N)$ is not a
Cauchy sequence. Thus, if $(\cx_n, n\in \N)$ is a
Cauchy sequence, then (ii) of Theorem \ref{GHP:PreComL} is satisfied. \\

Let $g_{n,m}(r)=d_\text{GH}^c((X_n^{(r)},d^{X_n^{(r)}}),
(X_m^{(r)},d^{X_m^{(r)}})) $. 
On the one hand, use \reff{eq:GHP-GH} to get:
\begin{equation}
 \label{eq:cv-gnm}
\lim_{\min(n,m)\rightarrow+\infty }
\int_0^{+\infty } \expp{-r}\; \left(1\wedge
 g_{n,m}(r)
\right)\; dr =0.
\end{equation}
On the other hand, using \reff{eq:GHP-GH} and Lemma
\ref{df:estimation}, and arguing as in the proof of Lemma
\ref{lem:reg-d-GHP}, we get that for any $r,\varepsilon\geq 0$:
\[
|g_{n,m}(r)- g_{n,m}(r+\varepsilon)|\leq 2 \varepsilon.
\]
This implies the functions $g_{n,m}$ are 2-Lipschitz. Thus, we deduce
from \reff{eq:cv-gnm}, that for all $r\geq 0$,
$\lim_{\min(n,m)\rightarrow+\infty } g_{n,m}(r)=0$. Thus the sequence
$((X_n^{(r)}, d^{X_n^{(r)}}), n\in \N)$ is a Cauchy sequence for the
Gromov-Hausdorff metric. Then point (2) of Proposition 7.4.11 in
\cite{Burago2001} readily implies condition (i) of Theorem
\ref{GHP:PreComL}.
\end{proof}

\end{document}